\numberwithin{equation}{section}
\newtheorem{thm}[equation]{Theorem} 
\newtheorem{prop}[equation]{Proposition}
\newtheorem{lemma}[equation]{Lemma} 
\newtheorem{cor}[equation]{Corollary}
\newtheorem{example}[equation]{Example}
\newtheorem{remark}[equation]{Remark}
\newtheorem{definition}[equation]{Definition}
\DeclareMathOperator{\gr}{gr}
\DeclareMathOperator{\length}{\ell}
\newcommand{\NN}{\mathbb N}
\newcommand{\RR}{\mathbb R}
\newcommand{\FF}{\mathbb F}
\newcommand{\ld}{\lambda}
\renewcommand{\ker}{\mbox{\rm Ker\,}}
\newcommand{\ot}{\otimes}
\newcommand{\s}{\mathfrak{S}}
\newcommand{\cH}{\mathcal{H}}
\newcommand{\CC}{\mathbb{C}}  
\newcommand{\ZZ}{\mathbb{Z}}
\DeclareMathOperator{\codim}{codim}
\newcommand{\field}{\FF}
\begin{document}
\begin{abstract}
  We investigate deformations of 
  skew group algebras arising
  from the action of the symmetric group on polynomial rings
  over fields of arbitrary characteristic.
  Over 
  the real or 
  complex numbers,
   Lusztig's graded
  affine Hecke algebra
  and analogs are all isomorphic to
  Drinfeld Hecke algebras, 
  which include the symplectic reflection algebras and rational Cherednik algebras.  Over fields of prime characteristic, new deformations
  arise that capture both a 
  disruption of the group action and also a 
  disruption of the commutativity 
  relations defining the polynomial
  ring.  We classify
deformations for the symmetric group
acting via its natural (reducible)
reflection representation.
  \end{abstract}
\title[Symmetric groups in positive characteristic]
      {Drinfeld Hecke algebras for\\
symmetric groups in positive characteristic}

\date{November 18, 2021}
\author{N.\  Krawzik}
\address{Department of Mathematics and Statistics, Sam Houston State University,
Huntsville, Texas 77340, USA}
\email{krawzik@shsu.edu}
\author{A.V.\ Shepler}
\address{Department of Mathematics, University of North Texas,
Denton, Texas 76203, USA}
\email{ashepler@unt.edu}
\thanks{Key Words:
symmetric group, graded affine Hecke algebra, Drinfeld Hecke algebra,
deformations, modular representations.}
\thanks{MSC-Numbers: 20C08, 20C20, 16E40}
\thanks{Second  author  partially  supported  by  Simons  Foundation Grant \#42953.}

\maketitle

\section{Introduction}

Deformations of skew group algebras constructed from finite groups
acting on polynomial rings
are used in representation
theory, combinatorics, and the study of orbifolds.
These deformations
include graded affine Hecke algebras,
Drinfeld Hecke algebras, rational Cherednik algebras, and symplectic reflection algebras.
The skew group algebra $S\#G$ arising from a group $G$
acting on an algebra $S$
by automorphisms
is the natural semidirect product algebra $S\rtimes G$.

 Lusztig~\cite{Lusztig88, Lusztig89}
defined deformations
of skew group algebras $S\#G$
for a Weyl
group $G$ acting
on its reflection representation $V= \RR^n$
and its associated polynomial ring
$S=S(V)=\RR[v_1, \ldots, v_n]$ in his investigations of
the representation theory
of groups of Lie type.
His algebras
alter
the relations 
$g\cdot s = g(s)\cdot g$ 
capturing
the group action but
preserve the commutativity 
relation $v_iv_j=v_jv_i$
defining the polynomial ring $S$.
These algebras are known
as {\em graded affine Hecke algebras}.  Around the same time,
Drinfeld~\cite{Drinfeld}
more broadly
considered an arbitrary
subgroup $G$ of $\text{GL}_n(\CC)$.
He defined
a deformation 
of $S\# G$,
sometimes called
the {\em Drinfeld Hecke algebra},
by instead
altering the commutation
relation $v_iv_j = v_jv_i$
defining the polynomial
ring $S$ (now defined over $\CC$) but
leaving the group action relation alone.
Drinfeld's type of deformation
was rediscovered by Etingof and Ginzburg~\cite{EtingofGinzburg} in the study
of orbifolds as 
{\em symplectic reflection algebras}
when the acting group
$G$
is symplectic, which include
{\em rational Cherednik algebras}
as a special case.
Collectively, we 
call all these deformations
{\em Drinfeld Hecke algebras}.

Over $\CC$,
every deformation
modeled on Lusztig's graded affine Hecke algebra is
isomorphic to a deformation
modeled on Drinfeld's Hecke algebra
(see~\cite{RamShepler}).
This result
holds for any finite group $G$ in the {\em nonmodular setting}, i.e., when 
the characteristic
of the underlying field $\FF$
is coprime to
the group order $|G|$
(see~\cite{SheplerWitherspoon2015}),
and in the present work
we extend this result to algebras
that deform {\em both} Lusztig and Drinfeld
type relations.
The theorem fails
in the {\em modular setting},
when $\text{char}(\FF)$
divides $|G|$, and
new types of deformations arise.
We begin here a concrete study of
these mixed deformations
using PBW conditions from~\cite{SheplerWitherspoon2015}.

We consider the symmetric group $\mathfrak{S}_n$
acting by permutations
of basis elements on a
vector space $V\cong \FF^n$
over a field $\FF$.
We include the case when 
$\text{char}(\FF)$ divides
the order of $\mathfrak{S}_n$. 
 We deform the natural
semi-direct product algebra 
$S\rtimes \mathfrak{S}_n$
for $S=S(V)\cong \FF[v_1, \ldots, v_n]$, the polynomial
ring,
by introducing relations
that disturb both the action of $\mathfrak{S}_n$
on $V$ and the commutativity of
the polynomial ring
$S$. 
We classify
the deformations arising in this way.

Recently, authors have 
considered 
the representation theory
 of similar deformations 
 over fields of positive characteristic
 defined by relations of Drinfeld type.
For example,
Bezrukavnikov, Finkelberg, and Ginzburg~\cite{BFG}, Devadas and Sun~\cite{DevadasSun},
Devadas and Sam~\cite{DevadasSam},
Cai and Kalinov~\cite{CaiKalinov},
and Lian~\cite{Lian}
all consider the Weyl group of type
$A_{n-1}$ (the irreducible reflection action 
of the symmetric group $\mathfrak{S}_n$)
acting on $V\oplus V^*$
to give rational Cherednik algebras.
Bellamy and Martino~\cite{BellamyMartino}
as well as Gordon~\cite{Gordon}
investigate
the action of
the symmetric group $\mathfrak{S}_n$
in the nonmodular setting,
when char$(\FF)$ and $|\mathfrak{S}_n|$
are coprime.
For related algebras built on the action
of the special linear group, general linear group,
and cyclic groups, see
Balagovi\'c and Chen~\cite{BalagovicChen,
BalagovicChen2} and  Latour~\cite{Latour}.
In contrast, the algebras classified here
are defined by relations of  both
Lusztig and Drinfeld type.

\begin{example}
Let $\field$ be a field of characteristic $p\geq 0$, $p\neq 2$, and let $G=\s_n$ act on $V=\field^n$ by permuting basis vectors
$v_1, \dots, v_n$ for $n> 2$.
Using \cref{PBWconditions} below, one may show that
the $\FF$-algebra
generated by $v$ in $V$
and the group algebra $\FF G$ 
 with  relations
\[\begin{aligned} 
v_iv_j-v_jv_i&=\sum_{k\neq i,j}
\big((i\ j\ k)-(j\ i\ k)\big)
\text{ in } \FF G \quad\text{ and }\quad
gv_i-\ g(v_i)g
=\big(g(i)-i\big)g
\text{ in } \FF G\\
\end{aligned}\]
is a PBW deformation of $\FF[v_1, v_2, ..., v_n]\rtimes G$.
\end{example}

\vspace{2ex}

{\bf Outline.}
We review Drinfeld Hecke algebras in Section 2 and PBW conditions
in Section~3.
In Section~4, we show that every
Drinfeld Hecke algebra in the nonmodular
setting is isomorphic to one
in which the skew group relation is not deformed.
We turn to the modular setting for the rest
of the article and consider the symmetric group in its permutation representation.
We examine PBW conditions in Sections 5 and 6.
We classify the Drinfeld Hecke algebras
in Section~7 and give these algebras explicitly
for dimensions $n=1,2,3$ in Section~8.
We conclude 
by investigating
the case of an invariant parameter in Section 9.

\vspace{2ex}

{\bf Notation.}
Throughout, we fix a field $\field$ 
of characteristic $p\geq 0$, $p\neq 2$,
and unadorned tensor symbols 
denote the tensor product over $\field$: 
$\otimes = \otimes_{\field}$.  We assume all $\field$-algebras are associative
with unity $1_{\FF}$
which is identified with the group identity $1_G$ in any group algebra $\FF G$.

\section{Drinfeld Hecke algebras}\label{sec:quadratic}

We consider a  finite group $G\subset \text{GL(V)}$
acting linearly on a finite-dimensional
vector space $V\cong \field^n$
over the field $\field$.
We may 
extend the action to one by automorphisms
on 
the symmetric algebra $S(V)\cong\FF[v_1, \dots, v_n]$  
for an $\FF$-basis
$v_1, \dots, v_n$ of $V$
and on the tensor algebra $T(V)=T_\FF(V)$ (i.e., the free associative $\field$-algebra
generated by $v_1, \ldots, v_n$).
We write the action of $G$ on any $\FF$-algebra $A$ with left superscripts,
$a\mapsto\ ^g a$ for $g$ in $G$,
$a$ in $A$, to avoid confusion
with the product $g\, a$ in any
algebra containing 
$\FF G$ and $A$.


\subsection*{Skew group algebra}
Recall that the {\bf skew group algebra} 
$R\#G
=R\rtimes G$
of a group $G$ acting by automorphisms
on an $\field$-algebra $R$ (e.g., $S(V)$ or $T(V)$)
is 
 the  $\FF$-algebra
generated by $R$ and $\field G$
with relations
$ g\, r = \,^{g} r \, g$
for 
$r$ in $R$, $g$ in $G$.
This natural semi-direct product algebra is also called the
{\em smash product}
or {\em crossed product algebra}.

\subsection*{Relations
given in terms of
parameter functions}
We consider an algebra
generated by $v$ in $V$
and $g$ in $G$ with
relations given in terms
of two parameter functions
$\ld$ and $\kappa$.  The parameter
$\ld$ measures the failure
of group elements
to merely act when they 
are interchanged with a vector $v$ in $V$,
and $\kappa$ measures
the failure of vectors in $V$ to commute.

Let
$\cH_{\ld,\kappa} $ 
be the associative $\field$-algebra generated 
by $v$ in $V$ together with the group algebra
$\field G$  with additional relations
\begin{equation}\label{relations}
\begin{aligned}
\text{}\ vw-wv=\kappa(v,w)
\ \ \text{ and }\ \
\text{}\ g v-   \,^g\! v g = \ld(g,v)
\quad\text{ for }\ 
v,w \in V,\ g\in G
\end{aligned}
\end{equation}
for a choice of
linear parameter functions $\lambda$ and $\kappa$, 
with $\kappa$ alternating,
$$
\kappa:V\ot V \rightarrow \field G,\quad
\lambda:\field G\ot    V \rightarrow \field G\, .
$$
For ease with notation,
we write $\kappa(v,w)$ for $\kappa(v\ot w)$
and $\ld(g,v)$ for $\ld(g\ot v)$ throughout.
Thus $\cH_{\lambda,\kappa}$
is a quotient of the free 
$\FF$-algebra generated
by $G$ and the basis $v_1,\ldots, v_n$ of $V$:
$$
\begin{aligned}
\cH_{\ld,\kappa} 
=
\FF\langle
v_1, \ldots, v_n, g: g\in G
\rangle
/
(
&gh - g\cdot_G h,\\
&vw-wv-\kappa(v,w),\\
&gv- \,^g\! v g - \ld(g,v):
v,w \in V,\ g,h \in G)\, .
\end{aligned}
{}_{}
$$
Note that $\cH_{0,0}=S(V)\# G$,
which is isomorphic
to $S(V)\ot \FF G$
as an $\FF$-vector space.

\vspace{2ex}

\subsection*{PBW property}
Recall that a filtered algebra satisfies
the PBW property
with respect to a set of generating
relations
when its homogeneous version 
coincides with its 
associated graded algebra.
We filter the algebra
$\cH_{\ld,\kappa}$ by degree
after assigning degree $1$ to
each $v$ in $V$ and degree $0$
to each $g$ in $G$.
The homogeneous version of
$\cH_{\ld,\kappa}$ is then
the skew group algebra
$S(V)\# G$, while
the associated 
graded algebra 
$\gr\cH_{\ld,\kappa}$ 
is a quotient of $S(V)\# G$
(e.g., see Li~\cite[Theorem~3.2]{Li2012}
or Braverman and Gaitsgory~\cite{BG}).
Thus we say $\cH_{\ld,\kappa}$ 
exhibits the {\em Poincar\'e-Birkhoff-Witt (PBW) property}
when
$$
\gr \cH_{\ld,\kappa} \cong S(V)\# G
$$
as graded algebras.
We call $\cH_{\ld,\kappa}$ a 
{\em Drinfeld Hecke algebra} 
in this case.
Note that every Drinfeld Hecke algebra 
has $\FF$-vector space basis
$\{v_1^{i_1} v_2^{i_2} \dotsm v_n^{i_n} g:
i_m \in \NN, g\in G\}$
for any $\FF$-vector space basis
$v_1,\ldots, v_n$ of $V$.

This terminology arises from the fact
that
Lusztig's graded versions of the affine Hecke algebras
(see~\cite{Lusztig88, Lusztig89})
are special cases of the PBW algebras
$\cH_{\ld, \kappa}$:
Lusztig's algebras arise
in the case that
$\kappa\equiv 0$ 
and $G$ is a finite Coxeter group.
(The parameter $\ld$ may be defined using a simple 
root system 
for $G$ and BGG operators).
Drinfeld's algebras~\cite{Drinfeld}
arise in the case that $\lambda\equiv 0$.
Examples 
when $\ld\equiv 0$
include
rational Cherednik algebras and symplectic reflection algebras
(see Etingof and Ginzburg~\cite{EtingofGinzburg}).


\section{Poincar\'e-Birkhoff-Witt property}
In this section,  
we recall necessary and sufficient
conditions
on the parameters
$\ld$ and $\kappa$
from~\cite{SheplerWitherspoon2015} for a filtered
quadratic algebra $\cH_{\ld,\kappa}$ to 
exhibit the PBW property.
Again, let $G\subset \text{GL}(V)$
be a finite group
acting on $V\cong \field^n$.
Let
$$\lambda_g: \field G\ot V\rightarrow \field
\quad\text{ and }\quad
\kappa_g: V\otimes V\rightarrow \field
$$ 
be the coefficient
functions for the parameters $\ld$ and $\kappa$, i.e., the
$\field$-linear maps for which 
$$
   \lambda(h,v)=\sum_{g\in G}\lambda_g(h,v) g
    \quad\text{ and }\quad \kappa(u,v) 
    =\sum_{g\in G} \kappa_g(u,v)g 
\quad\text{
for } g,h\in G,\
u,v\in V.
$$ 

\subsection*{Group action on parameters}
The group $G$ acts on the space of
parameter functions  $\ld$ and $\kappa$ in the usual way,
\begin{equation}
\label{groupactiononparameters}
(^h\kappa)(v,w)=
\ ^h\big(\kappa(\ ^{h^{-1}}\!v,\ ^{h^{-1}}\!w)\big),
\ (^h\ld)(g, v)
=\ ^h\big(\ld(h^{-1}gh, \ ^{h^{-1}}\! v)\big),
\end{equation}
using the action of $G$ 
on $\FF G$ by conjugation,
$g: h\mapsto g h g^{-1}$.

\subsection*{PBW conditions}
The second  PBW
condition
below
measures the failure of $\kappa$ to be $G$-invariant
while the first
shows that $\ld$ is determined
by its values on generators of $G$. 


\vspace{1ex}

\begin{thm}[\bf PBW Conditions]
\label{PBWconditions}
For any finite group $G\subset \text{GL}(V)$,
an algebra $\cH_{\ld,\kappa}$ exhibits the PBW property if and only if 
the following five conditions hold
for all $g,h$ in $G$, $u,v,w$ in $V$:
\begin{enumerate}
\item\label{cocycle21}
\,  \rule[0ex]{0ex}{2.5ex}
$\ld(gh,v)=\ld(g,\, ^hv) h + g\ld(h,v)$ in $\field G$.
\item\label{firstobstruction12}\ \ \rule[0ex]{0ex}{2.5ex}
$\kappa(\, ^gu,\, ^g v)g-g\kappa(u,v)
=
\ld\bigl(\ld(g,v),u\bigr)-\ld\bigl(\ld(g,u),v\bigr)$ in $\field G$.
\item\label{cocycle12}\ \ \rule[0ex]{0ex}{2.5ex}
$\ld_h(g,v)(\, ^hu-\, ^gu)=\ld_h(g,u)(\, ^hv-\, ^gv)
$ in $V$.
\item\label{firstobstruction03}
\ \ \rule[0ex]{0ex}{2.5ex}
$\kappa_g(u,v)(^gw-w)+\kappa_g(v,w)(^gu-u)+\kappa_g(w,u)(^gv-v)=0$ in $V$.
\item\label{mixedbracket}
\ \ \rule[0ex]{0ex}{2.5ex}
$\ld\bigl(\kappa(u,v),w\bigr)
+\ld\bigl(\kappa(v,w),u\bigr)+\ld\bigl(\kappa(w,u),v\bigr)=0$ in $\field G$.
\end{enumerate}
\end{thm}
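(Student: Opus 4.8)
The plan is to establish the PBW property through a Bergman-style rewriting (Diamond Lemma) argument, equivalent to invoking the homological deformation criterion of Braverman and Gaitsgory as adapted to the smash product $S(V)\#G$ (cf.\ \cite{BG, SheplerWitherspoon2015}). I would fix the basis $v_1,\dots,v_n$ of $V$ and present $\cH_{\ld,\kappa}$, as in the excerpt, as a quotient of the free $\FF$-algebra on the alphabet $\{v_1,\dots,v_n\}\cup G$ by the rewriting rules $gh\rightsquigarrow g\cdot_G h$, $v_iv_j\rightsquigarrow v_jv_i+\kappa(v_i,v_j)$ for $i>j$, and $gv_i\rightsquigarrow {}^gv_i\,g+\ld(g,v_i)$, whose irreducible words are exactly the standard monomials $v_1^{a_1}\cdots v_n^{a_n}g$. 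The first task is to fix a monomial order with the descending chain condition compatible with these (inhomogeneous) rules --- order words first by total degree in $V$, then push group letters to the right and sort the $v$-letters --- so that the Diamond Lemma applies and the PBW property becomes equivalent to the resolvability of every overlap ambiguity. One uses throughout that $\ld$ and $\kappa$ are $\FF$-linear, that $\kappa$ is alternating (part of the definition of $\cH_{\ld,\kappa}$, and in fact forced by consistency of the commutation relations), and that $G$ acts on $S(V)$ by algebra automorphisms.

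The overlap ambiguities are the words $ghk$, $ghv_i$, $gv_iv_j$ with $i>j$, and $v_iv_jv_k$ with $i>j>k$; I would resolve them in that order. The word $ghk$ is settled by associativity of the group product. For $ghv_i$, reducing $(gh)v_i$ against $g(hv_i)$ and collapsing the resulting $\ld$-terms (via linearity of $\ld$ and composition of the $G$-action) shows the two $V$-degree-$1$ parts always agree, while equality of the $V$-degree-$0$ parts is exactly \eqref{cocycle21}. For $gv_iv_j$, reducing $(gv_i)v_j$ against $g(v_iv_j)$ spawns further $\ld$- and $\kappa$-reductions whose intermediate terms, such as ${}^gv_i\,{}^gv_j\,g$, must themselves be re-sorted; the $V$-degree-$2$ part agrees because the rewriting restricted to leading terms is just the already-confluent rewriting of $S(V)\#G$, the $V$-degree-$1$ part is \eqref{cocycle12}, and the $V$-degree-$0$ part is \eqref{firstobstruction12}. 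For $v_iv_jv_k$, reducing $(v_iv_j)v_k$ against $v_i(v_jv_k)$, the $V$-degree-$2$ part agrees because $S(V)$ is itself a PBW algebra, the $V$-degree-$1$ part is the Jacobi-type identity \eqref{firstobstruction03}, and the remaining $V$-degree-$0$ part is \eqref{mixedbracket}.

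This yields both implications at once. If \eqref{cocycle21}--\eqref{mixedbracket} hold, every overlap is resolvable, so by the Diamond Lemma the standard monomials form an $\FF$-basis of $\cH_{\ld,\kappa}$; since $\gr\cH_{\ld,\kappa}$ is always a quotient of $S(V)\#G$ and now agrees with it in every filtered degree, the surjection $S(V)\#G\twoheadrightarrow\gr\cH_{\ld,\kappa}$ is an isomorphism, which is the PBW property. Conversely, if $\cH_{\ld,\kappa}$ is PBW then the standard monomials are linearly independent, every complete reduction of an overlap word gives the same $\FF$-combination of them, so its two reduction sequences agree, and comparing $V$-graded components recovers \eqref{cocycle21}--\eqref{mixedbracket}.

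The main obstacle is the bookkeeping concealed in the phrases ``collapsing the $\ld$-terms'' and ``re-sorting the intermediate terms'': in the $gv_iv_j$ and $v_iv_jv_k$ overlaps each reduction step can create new $\ld$- or $\kappa$-terms that in turn require reduction, and these cascades must be tracked to all orders and separated by degree in $V$ to recognize the cascaded sums as precisely the iterated expressions $\ld(\ld(g,v),u)$ and $\ld(\kappa(u,v),w)$ appearing in \eqref{firstobstruction12} and \eqref{mixedbracket}, and the $\kappa_g(u,v)({}^gw-w)$-type sums appearing in \eqref{firstobstruction03}. The conjugation action of $G$ on $\FF G$ and the automorphism identity ${}^g(ab)={}^ga\,{}^gb$ on $S(V)$ are what let one close up these sums; a secondary but necessary point is confirming that the inhomogeneous rewriting system really admits an order with the descending chain condition before the Diamond Lemma can be applied.
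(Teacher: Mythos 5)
The paper does not actually prove \cref{PBWconditions}: it is imported verbatim from~\cite{SheplerWitherspoon2015}, where the five conditions are obtained by homological machinery (a Braverman--Gaitsgory-type analysis of quadratic algebras over the group ring $\FF G$, designed to work when $\FF G$ is not semisimple), so there is no in-paper proof to match yours against. Judged on its own terms, your Diamond Lemma argument is a sound and genuinely different route to the same statement, and your bookkeeping of where each condition appears is exactly right: resolvability of $ghk$ is group associativity, of $ghv_i$ gives Condition~(1) in $V$-degree $0$ (degree $1$ agrees because the action is a homomorphism), of $gv_iv_j$ gives Condition~(3) in degree $1$ and Condition~(2) in degree $0$ (the re-sorting of ${}^gv_i\,{}^gv_j$ is what produces the term $\kappa({}^gv_i,{}^gv_j)g$, using bilinearity and the alternating property), and of $v_iv_jv_k$ gives Condition~(4) in degree $1$ and Condition~(5) in degree $0$; the converse via linear independence of standard monomials is also correct. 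What you should still make explicit is routine but not optional: (i) an actual semigroup partial order with the descending chain condition compatible with all three rule families (degree in $V$ first, then number of group letters, then position of group letters, then inversions among the $v$'s works); (ii) the identification $1_G=1_\FF$ (otherwise the irreducible words $v^a$ and $v^a 1_G$ are distinct and the standard monomials are not what you claim---note the paper's own presentation in Section~4 imposes $1_\FF-1_G$); and (iii) in the converse direction the overlaps only yield Conditions~(1)--(5) on basis vectors with $i>j(>k)$, so you must extend to arbitrary $u,v,w$ by multilinearity together with the (anti)symmetry of each condition and the degenerate cases where arguments coincide. As for what each approach buys: your rewriting proof is elementary, characteristic-free, and gives the PBW basis directly, while the homological route of the cited source explains the conditions conceptually---(1) and (3) as cocycle conditions, (2), (4), (5) as first obstructions/Gerstenhaber-bracket conditions---and plugs the algebras into deformation theory and Hochschild cohomology, which is why the paper can quote corollaries like \cref{PBWkappaconditions} and \cref{cor:lambda} with no extra work.
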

We will refer
to the specific conditions in this theorem,
each with universal quantifiers,
as {\em PBW Conditions (1) through (5)}.
Notice that if $\cH_{\lambda, \kappa}$
exhibits the PBW property,
so does $\cH_{c\lambda, c^2 \kappa}$
for any $c$ in $\FF$.

\vspace{1ex}

We will need two corollaries from~\cite{SheplerWitherspoon2015}, the first on $\kappa$
and the second on $\lambda$.
We set
$
\ker\kappa_g
=
\{v\in V:\kappa_g(v,w)=0 
\text{ for all } w\in V\}\, 
$
and say that $\kappa$ is {\em supported}
on a subset $S\subset G$ when $\kappa_h\equiv 0$
unless $h$ lies in $S$.  Similarly, we say that
$\ld(g,*)$ is {\em supported} on $S\subset G$
when $\ld_h(g,v)=0$ for all $v$ in $V$ and $h$ not in $S$.
 Here, $V^g=\{v\in V: \, ^ g v=v\}$
  is the fixed point space.
\vspace{1ex}

\begin{cor}\label{PBWkappaconditions}
Let $G\subset \text{GL}(V)$
be a finite group and
assume $\cH_{\lambda, \kappa}$ 
exhibits the PBW property. 
For every $g$ in $G$,
if $\kappa_g \not\equiv 0$,
then either 
\begin{enumerate}
\item[(a)] $\codim V^g = 0$ (i.e., $g$ acts as the identity on $V$),
or
\item[(b)]
$\codim V^g = 1$ and $\kappa_g(v_1,v_2)=0$
for all $v_1,v_2$ in $V^g$,
or
\item[(c)]
$\codim V^g=2$ with
$\ker\kappa_g = V^g$.
\end{enumerate}
\end{cor}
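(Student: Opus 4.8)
The plan is to read off everything from PBW Conditions (2)–(4), working with a fixed $g$ for which $\kappa_g\not\equiv 0$ and aiming to bound $\codim V^g$. First I would extract the purely-$\kappa$ content. PBW Condition (4) says that for this $g$ the trilinear expression $\kappa_g(u,v)(^gw-w)+\kappa_g(v,w)(^gu-u)+\kappa_g(w,u)(^gv-v)$ vanishes identically in $V$; since $p\neq 2$ we may also use the alternating property of $\kappa$. The natural move is to decompose $V=V^g\oplus W$ for a $g$-stable complement $W$ (available because $G$ is finite and, if $p\mid|G|$, one can still split off the fixed space for a single semisimple — or at least use that $(^g-1)$ has image landing in a complement; more carefully, $\codim V^g=\dim\im(^g-1)$, and $\im(^g-1)$ meets $V^g$ possibly nontrivially in the modular case, so I would instead phrase the argument in terms of the map $\delta_g:=\,^g-1:V\to V$ and the bilinear form $\kappa_g$). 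Cases (a)–(c) are organized by $\codim V^g=\dim\im\delta_g\in\{0,1,2,\ge 3\}$, so the real claim is: $\kappa_g\not\equiv 0$ forces $\dim\im\delta_g\le 2$, and in the boundary cases the stated vanishing holds.

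Next I would run the following case analysis. If $\dim\im\delta_g=0$ we are in (a) and there is nothing to prove. Suppose $\dim\im\delta_g\ge 1$. Plug vectors into Condition (4): choosing $u,v\in V^g$ (so $\delta_g u=\delta_g v=0$) and arbitrary $w$ gives $\kappa_g(u,v)\,\delta_g w=0$ in $V$ for all $w$; since $\dim\im\delta_g\ge 1$ this forces $\kappa_g(u,v)=0$ for all $u,v\in V^g$, which is exactly the vanishing asserted in (b) (and subsumed in (c)). Now suppose further $\dim\im\delta_g\ge 2$; I want to show it is exactly $2$ and that $\ker\kappa_g=V^g$, i.e. $\kappa_g(v,w)=0$ for all $w$ forces $v\in V^g$, equivalently $\kappa_g$ restricted to any complement of $V^g$ is nondegenerate. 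Take $u\in V^g$ and $v,w$ arbitrary in Condition (4): the first term drops, leaving $\kappa_g(v,w)\delta_g u=0$ automatically, so the surviving identity is $\kappa_g(w,u)\delta_g v-\kappa_g(v,u)\delta_g w=0$ — wait, re-examining with $u\in V^g$: $\kappa_g(u,v)\delta_g w+\kappa_g(v,w)\delta_g u+\kappa_g(w,u)\delta_g v=\kappa_g(u,v)\delta_g w+\kappa_g(w,u)\delta_g v=0$. So for every $u\in V^g$ and all $v,w\in V$,
\[
\kappa_g(u,v)\,\delta_g w=\kappa_g(u,w)\,\delta_g v .
\]
Fixing $u$, this says the linear functional $v\mapsto\kappa_g(u,v)$ and the linear map $v\mapsto\delta_g v$ are "proportional" in the sense that $\kappa_g(u,\cdot)$ factors through $\delta_g$ composed with a functional; in particular if $\dim\im\delta_g\ge 2$ then $\kappa_g(u,\cdot)$ must vanish on $\ker\delta_g=V^g$ (already known) and, pairing two independent $w$'s, one forces $\kappa_g(u,\cdot)\equiv 0$ for every $u\in V^g$ — which is consistent — and more importantly one extracts that $\im\delta_g$ is spanned by the "directions" $\delta_g v$ for $v$ with $\kappa_g(u,v)\neq 0$, pinning the dimension. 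I would then feed the complementary information from Condition (2) (the obstruction $\ld(\ld(g,v),u)-\ld(\ld(g,u),v)$) and Condition (3) to handle the interaction with $\ld$, but for the statement as written — which is purely about $\kappa$ — the key leverage is iterating the displayed relation together with alternation of $\kappa_g$ to conclude $\dim\im\delta_g\le 2$ and, when equality holds, $\ker\kappa_g\subseteq V^g$ (the reverse inclusion $V^g\subseteq\ker\kappa_g$ being the already-proved (b)-vanishing).

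The main obstacle I anticipate is the modular subtlety: in characteristic $p$ dividing $|G|$ one cannot freely assume $V=V^g\oplus\im\delta_g$, and $\im\delta_g$ can intersect $V^g$, so the clean bilinear-form nondegeneracy argument from the nonmodular proof in~\cite{SheplerWitherspoon2015} needs care. I would address this by arguing entirely with the single operator $\delta_g=\,^g-1$ and the relation $\kappa_g(u,v)\delta_g w=\kappa_g(u,w)\delta_g v$ for $u\in V^g$, never decomposing $V$; the dimension count $\codim V^g=\dim\im\delta_g$ is always valid, and the displayed relation plus $p\neq 2$ is enough to force $\im\delta_g$ to be at most two-dimensional once $\kappa_g\not\equiv 0$ and to locate $\ker\kappa_g$ in the boundary case. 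If a gap remains in the modular case, the fallback is to invoke Conditions (2) and (5) to rule out pathological $\kappa_g$ with larger support, exactly as these conditions are designed (per the remark before the theorem) to control the failure of $G$-invariance of $\kappa$.
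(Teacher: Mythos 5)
Your overall strategy is sound: the corollary really is a consequence of PBW Condition~(4) together with the alternating property of $\kappa_g$ alone, and the paper itself gives no internal proof (it quotes the statement from~\cite{SheplerWitherspoon2015}), so working directly from Condition~(4) with the single operator $\delta_g:={}^g-1$ and never decomposing $V$ is exactly the right way to sidestep the modular issues you worry about. Your opening steps are correct: taking $u,v\in V^g$ gives $\kappa_g(u,v)\,\delta_g w=0$ for all $w$, which is the vanishing in (b), and for $u\in V^g$ with $\codim V^g\ge 2$ your relation $\kappa_g(u,v)\,\delta_g w=\kappa_g(u,w)\,\delta_g v$ does yield $V^g\subseteq\Ker\kappa_g$.

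However, the two decisive steps are not actually established. First, the bound $\codim V^g\le 2$: you propose to get it by ``iterating the displayed relation,'' but that relation was derived for $u\in V^g$, and you have just shown $\kappa_g(u,\cdot)\equiv 0$ for such $u$, so it is vacuous and cannot pin down any dimension (your sentence about $\im\delta_g$ being spanned by the directions $\delta_g v$ with $\kappa_g(u,v)\neq 0$ is empty for the same reason). The correct step uses Condition~(4) with a generic triple: choose $u,v$ with $\kappa_g(u,v)\neq 0$; then for every $w$ the identity puts $\kappa_g(u,v)\,\delta_g w\in\Span(\delta_g u,\delta_g v)$, so $\im\delta_g\subseteq\Span(\delta_g u,\delta_g v)$ and $\codim V^g=\dim\im\delta_g\le 2$. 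Second, the inclusion $\Ker\kappa_g\subseteq V^g$ in case (c) is asserted but never argued; it follows at once by feeding $x\in\Ker\kappa_g$ into Condition~(4) against the same pair $u,v$: the two terms involving $\kappa_g(\cdot,x)$ vanish, leaving $\kappa_g(u,v)\,\delta_g x=0$, hence $\delta_g x=0$ and $x\in V^g$. With these two one-line arguments inserted, your plan closes; the proposed fallback to Conditions~(2) and~(5) is neither needed nor a substitute for them.
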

\vspace{1ex}

Recall that a nonidentity element of $\text{GL}(V)$
is a {\em reflection} if it fixes a hyperplane of $V$
pointwise.  We see in the above corollary that
over fields of positive characteristic,
a parameter $\kappa$ defining an algebra with the PBW
property may be supported on reflections in addition to the identity $1_G$ and bireflections (elements whose
fixed point space has codimension $2$) 
in contrast
to the possible support in the nonmodular setting.
\vspace{1ex}

\begin{cor}\label{cor:lambda}
Let $G\subset \text{GL}(V)$
be a finite group
and say $\cH_{\ld,\kappa}$ exhibits the PBW property.
Then the following statements
hold for any $g$ in $G$.
\begin{enumerate}
\item
$\ld(1,*)$ is identically zero:
$\lambda(1,v)=0$ for all $v$ in $V$.
\item \label{lg-inverse} 
$\ld(g,*)$ determines $\ld(g^{-1}, *)$ by
  $
  g\lambda(g^{-1},v)= - \lambda(g, {}^{g^{-1}}v)g^{-1}\, .
  $
\item
$\ld(g,*)$ can be defined recursively:
For $j\geq 1$,
$
\ld(g^j,v)=\sum_{i=0}^{j-1} g^{j-1-i}\, \ld(g,\, ^{g^i}v)\, g^i\ .
$
\item
$\ld(g,*)$ is supported on $h$ in $G$ with $h^{-1}g$ either
a reflection or the identity on $V$. 
If $h^{-1}g$ is a reflection, then 
$\ld_h(g,v)=0$ for all $v$ on the reflecting hyperplane
$V^{h^{-1} g}$.
\item
If $V^g\neq V$,
$\lambda_1(g,v)=0$
unless $g$ is a reflection and $v\notin V^g$.
\end{enumerate}
\end{cor}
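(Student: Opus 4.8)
The plan is to deduce all five statements from just two of the conditions in \cref{PBWconditions}: statements~(1)--(3) from PBW Condition~(1) together with the standing hypothesis $p\neq 2$, and statements~(4) and~(5) from PBW Condition~(3). For~(1), I would set $g=h=1$ in PBW Condition~(1); since $1$ acts trivially on $V$ and is the unit of $\field G$, that condition reads $\lambda(1,v)=2\,\lambda(1,v)$, so $\lambda(1,v)=0$ because $2$ is invertible in $\field$. For~(2), apply PBW Condition~(1) to the pair $(g,g^{-1})$: the left side is $\lambda(1,v)=0$ by~(1), and solving for $g\lambda(g^{-1},v)$ gives the stated identity. For~(3), induct on $j$; the case $j=1$ is a tautology, and for the inductive step factor $g^{j+1}=g^{j}\cdot g$, apply PBW Condition~(1), substitute the inductive formula for $\lambda(g^{j},{}^{g}v)$, use ${}^{g^{i}}({}^{g}v)={}^{g^{i+1}}v$, and reindex; the term $\lambda(g^{j},{}^{g}v)\,g$ supplies the summands $i=1,\dots,j$ and the term $g^{j}\lambda(g,v)$ supplies $i=0$.

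The substantive part is~(4), and the key observation is that, for each fixed pair $g,h\in G$, PBW Condition~(3) says exactly that the linear functional $\ell:=\lambda_{h}(g,-)\colon V\to\field$ and the linear endomorphism $\varphi:=(h-g)|_{V}\colon V\to V$ satisfy $\ell(v)\,\varphi(u)=\ell(u)\,\varphi(v)$ for all $u,v\in V$. If $\ell\not\equiv 0$, choose $v_{0}$ with $\ell(v_{0})\neq 0$; then $\varphi(u)=\ell(v_{0})^{-1}\ell(u)\,\varphi(v_{0})$ for every $u\in V$, so $\varphi$ has rank at most one. Since $\ker\varphi=\{v\in V:{}^{h}v={}^{g}v\}=V^{h^{-1}g}$, this gives $\codim V^{h^{-1}g}\le 1$, i.e.\ $h^{-1}g$ acts on $V$ as the identity (codimension $0$) or as a reflection (codimension $1$); this is the support claim, since for $h$ with $\ell\equiv 0$ there is nothing to prove. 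If moreover $h^{-1}g$ is a reflection, then $\varphi\neq 0$, and for $v\in V^{h^{-1}g}=\ker\varphi$ the identity $\ell(v)\,\varphi(u)=\ell(u)\,\varphi(v)=0$ (valid for all $u$) forces $\ell(v)=0$; that is, $\lambda_{h}(g,v)=0$ on the reflecting hyperplane.

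Statement~(5) is then the special case $h=1$ of~(4): here $h^{-1}g=g$, so $\lambda_{1}(g,-)\not\equiv 0$ forces $g$ to act as the identity or as a reflection on $V$, the identity case being excluded by the hypothesis $V^{g}\neq V$, and in the reflection case the second clause of~(4) gives $\lambda_{1}(g,v)=0$ for every $v\in V^{g}$. The only hazard I anticipate is bookkeeping: keeping the conjugation action ${}^{g^{i}}v$ straight through the reindexing in~(3), and consistently reading ``reflection'' and ``identity on $V$'' as assertions about how the element $h^{-1}g$ acts on $V$, so that no unneeded faithfulness of $G\subset\text{GL}(V)$ is invoked. Beyond that, the corollary is a direct unwinding of PBW Conditions~(1) and~(3).
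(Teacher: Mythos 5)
Your proof is correct and follows essentially the route the paper indicates: the paper quotes this corollary from Shepler--Witherspoon and later remarks that items like (1) and (4) follow directly from PBW Conditions~(1) and~(3) respectively, which is exactly how you derive (1)--(3) from Condition~(1) and (4)--(5) from Condition~(3) via the rank-one observation on $u\mapsto{}^h u-{}^g u$. No gaps; the argument stands as written.
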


We give an example
in which the parameter
$\kappa: V\otimes V \rightarrow \FF G$
is {\em not} $G$-invariant.

\begin{example}
{\em
Let
$G=\s_n$ act on $V=\field^n$ for $n> 3$
by permuting basis 
elements $v_1, \ldots, v_n$,
i.e., $v_i\mapsto \ ^gv_i=v_{g(i)}$
for $g$ in $G$.
Fix two scalar parameters
$m,m'$ in $\FF$.
Define  $a_{ij}$ in $\FF$
for $1\leq i\neq j\leq n$
by
$m=a_{12}=a_{13}=-a_{21}=-a_{31}$,
$m'=a_{23}=-a_{32}$, and  $a_{ij}=0$ otherwise.
Then
the algebra defined by
\[
\begin{aligned} 
v_1v_2-v_2v_1
&=v_2v_3-v_3v_2
=v_3v_1-v_1v_3
=
\,
m^2\big((1\ 3\ 2)-(1\ 2\ 3)\big),
 \ 
v_iv_j-v_jv_i
=0
\text{ otherwise},
\\
gv_i-v_{g(i)}g
&=
\sum_{j\neq i}\ 
( a_{ij}- a_{g(i)\, g(j)} )
\
g(i\ j)
\quad
\text{ for $g\in G$, $1\leq i \leq n$}
\, 
\end{aligned}
\]
is a PBW deformation of $S(V)\#\field G$ 
(see \cref{classification} with $c=a_{123}$).
Notice (see \cref{equation:alpha beta def}
and \cref{definition of Hf})
that $a_{ij}=(1/4)\lambda_1\big((i\ j), 
v_i-v_j\big)$.
}
\end{example}

\section{Nonmodular Setting}
\label{nonmodular}
Before classifying algebras
in the modular setting,
we verify in this section
that every Drinfeld Hecke algebra
$\cH_{\lambda, \kappa}$ in the nonmodular setting 
is isomorphic to
one with $\lambda\equiv 0$.
We consider an arbitrary 
finite group $G\subset \text{GL}
(V)$ acting on $V\cong \field^n$
but assume $\text{char}(\field)\neq 2$
does not divide $|G|$ (e.g., $\text{char}(\field)=0$)
in this section only.
In the next theorem,
we extend a result of
~\cite{SheplerWitherspoon2015}
from the special case 
in which  one of the parameter functions
is zero to the case of more general parameters;
the result
of~\cite{SheplerWitherspoon2015}
strengthened a theorem
in~\cite{RamShepler} 
from the setting of Coxeter groups 
to arbitrary finite groups.

\begin{thm}\label{LusztigyIsDrinfeldy}
Say $G\subset \text{GL}(V)$ is a finite
group for
$V\cong \FF^n$ with $\text{char}(\FF)\neq 2$ coprime to 
$|G|$.
If an algebra
$\cH_{\lambda,\kappa'}$ exhibits the PBW property
for parameters 
$\lambda: \field G\ot V \rightarrow \field G$
and
$\kappa':V\ot V\rightarrow \field G$,
then there is an
algebra $\cH_{0, \kappa}$
for some
parameter 
$\kappa:V\ot V\rightarrow \field G$
with
$$
\cH_{0,\kappa}
\ \cong\ \cH_{\ld, \kappa'}\,
\quad\text{
as filtered algebras}.$$
\end{thm}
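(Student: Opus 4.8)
The plan is to exhibit an explicit filtered algebra isomorphism $\cH_{0,\kappa}\to\cH_{\ld,\kappa'}$ that fixes $\FF G$ and sends each $v\in V$ to $v$ plus a degree-zero correction term living in $\FF G$; that is, a map determined by $v\mapsto v+\mu(v)$ for a suitable $\FF$-linear $\mu:V\to\FF G$. The point of such a map is that conjugating the Lusztig-type relation $gv-{}^g\!v\,g=\ld(g,v)$ by this substitution should absorb $\ld$ into a shift of the group action while throwing off a correction to the commutator relation, producing a new $\kappa$ from $\kappa'$. So the first step is to write down the candidate $\mu$ and the second is to check that the images of the three families of defining relations of $\cH_{0,\kappa}$ hold in $\cH_{\ld,\kappa'}$, which will force the definition of $\kappa$ and simultaneously verify the map is well defined; the inverse is built the same way with $\mu$ replaced by $-\mu$, so bijectivity is automatic once both directions are well defined.

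To find $\mu$, I would look at PBW Condition (1), $\ld(gh,v)=\ld(g,{}^h v)h+g\,\ld(h,v)$, which says $g\mapsto(v\mapsto\ld(g,v))$ is a $1$-cocycle for $G$ acting on the bimodule $\Hom_\FF(V,\FF G)$ (with the action twisted on the source by ${}^{h^{-1}}$ and on the target by conjugation-and-multiplication as in \eqref{groupactiononparameters}). Here the coprimality hypothesis enters decisively: since $\text{char}(\FF)\nmid|G|$, the group cohomology $H^1(G,-)$ vanishes on any $\FF G$-module, so this cocycle is a coboundary. Concretely I expect $\mu(v)=\tfrac{1}{|G|}\sum_{h\in G}\ld(h^{-1},{}^{h}v)\,h$ or a close variant (the averaging that trivializes the cocycle), and the defining property we need is precisely $\ld(g,v)=g\,\mu(v)-\mu({}^g v)\,g$ for all $g,v$ — equivalently ${}^g\!v\,g+\ld(g,v)=g v$ rewrites as $g(v+\mu(v))=({}^g v+{}^g\mu(v))g$ modulo checking that the conjugation action on $\FF G$ matches $\mu$ composed with the permutation action, which is exactly the coboundary identity. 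Thus under $\phi:v\mapsto v+\mu(v)$ the Lusztig relation of $\cH_{\ld,\kappa'}$ becomes the undeformed skew-group relation, so $\phi$ kills $\ld$; that is the heart of the argument.

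It then remains to define $\kappa$ by whatever the commutator $[\,v+\mu(v),\,w+\mu(w)\,]$ equals in $\cH_{\ld,\kappa'}$: expanding, $[v+\mu(v),w+\mu(w)] = [v,w] + [v,\mu(w)] + [\mu(v),w] + [\mu(v),\mu(w)] = \kappa'(v,w) + (\text{terms involving } \ld)$, using that in $\cH_{\ld,\kappa'}$ a commutator $[v,h]$ for $h\in G$ is $vh-hv=-(hv-vh)=-( {}^h\!v\,h+\ld(h,v)-vh)$, which is a degree-$\le1$ element; the degree-$1$ part is a linear combination of vectors with $\FF G$ coefficients and must actually cancel for $\kappa$ to land in $\FF G$ — this cancellation should follow from PBW Condition (3) (the $\ld$–$\ld$ symmetry condition) applied to the cocycle $\mu$, or more cleanly by noting that since $\phi$ is an algebra homomorphism onto, the image of the relation $vw-wv-\kappa$ is automatically in the kernel once we \emph{define} $\kappa:=\phi^{-1}(\text{that commutator})$, provided the commutator genuinely lies in $\FF G$. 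So the one real technical point, and the step I expect to be the main obstacle, is verifying that $[v+\mu(v),w+\mu(w)]\in\FF G$ (no surviving $V$-component) and that the resulting $\kappa$ is alternating — this is where PBW Conditions (2) and (3) for $(\ld,\kappa')$ get used, and it is the place a naive substitution could fail. Once that is in hand, the reverse substitution $v\mapsto v-\mu(v)$ gives a two-sided inverse, and since $\mu$ is degree-decreasing the isomorphism respects the filtration; finally, $\cH_{0,\kappa}$ has the PBW property because it is isomorphic as a filtered algebra to one that does, completing the proof.
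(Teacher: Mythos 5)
Your forward construction coincides with the paper's: the averaging map you propose is (up to sign) the paper's conversion function $\gamma(v)=\tfrac{1}{|G|}\sum_{a,b\in G}\lambda_{ab}(b,{}^{b^{-1}}v)\,a$, whose existence is exactly where $\text{char}(\FF)\nmid|G|$ enters; the substitution $v\mapsto v+\gamma(v)$, $g\mapsto g$ kills the Lusztig-type relation via the coboundary identity, and $\kappa$ is defined to be the resulting commutator, $\kappa(u,v)=\gamma(u)\gamma(v)-\gamma(v)\gamma(u)+\lambda(\gamma(u),v)-\lambda(\gamma(v),u)+\kappa'(u,v)$. The cancellation of degree-one terms that you flag as the main obstacle is precisely the PBW-conditions computation the paper delegates to the proof of Theorem~4.1 of \cite{SheplerWitherspoon2015}. (Two small slips, absorbed by your ``close variant'' hedge: the identity you need is $\ld(g,v)=\mu({}^{g}v)\,g-g\,\mu(v)$ up to the sign built into $\mu$, and at one point you write ${}^{g}\mu(v)$ where $\mu({}^{g}v)$ is meant; $\mu$ need not be equivariant.)

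The genuine gap is the bijectivity step. You claim the inverse is ``built the same way with $\mu$ replaced by $-\mu$, so bijectivity is automatic once both directions are well defined,'' but well-definedness of the reverse substitution $v\mapsto v-\mu(v)$ as a map $\cH_{\ld,\kappa'}\to\cH_{0,\kappa}$ is not the mirror image of the forward check and is not free: one must verify inside $\cH_{0,\kappa}$ (an algebra for which you have no PBW basis, since its PBW property is part of what is being proved) that $[\,v-\mu(v),\,w-\mu(w)\,]=\kappa'(v,w)$, and expanding this against the definition of $\kappa$ produces both a degree-one identity of the form $\gamma_a(u)({}^{a}w-w)=\gamma_a(w)({}^{a}u-u)$ and a new degree-zero identity in $\FF G$ involving $[\gamma(u),\gamma(w)]$ and $\lambda(\gamma(u),w)-\lambda(\gamma(w),u)$ that you never establish; they do hold a posteriori because the map is $f^{-1}$, but invoking that is circular. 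The paper sidesteps this entirely: it shows $f:\cH_{0,\kappa}\twoheadrightarrow\cH_{\ld,\kappa'}$ is surjective and filtered, notes that the $m$-th filtered piece of $\cH_{0,\kappa}$ is merely spanned by the monomials $v_1^{a_1}\dotsm v_n^{a_n}g$ while those monomials form a basis of the $m$-th filtered piece of $\cH_{\ld,\kappa'}$ by its PBW property, so $\dim_{\FF}(\cH_{0,\kappa})_m\leq\dim_{\FF}(\cH_{\ld,\kappa'})_m$, and a surjection of finite-dimensional spaces with this inequality is injective on each filtered piece. To complete your argument you must either carry out the reverse well-definedness computation in full or replace that step by this dimension count.
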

\begin{proof}
Define a conversion function
$\gamma:V\rightarrow \field G$ by
$$
\gamma(v)=\frac{1}{|G|}\sum_{a,b\,\in\, G} \lambda_{ab}(b,\, ^{b^{-1}}v) a\  
\
\text{ and write $\gamma=\sum_{a\in G} \gamma_a a$}
$$
for coefficient functions $\gamma_a:V\rightarrow \FF$.
Let 
$\kappa:V\ot V\rightarrow \field G$ 
be the parameter function 
$$
\kappa(u,v)=\gamma(u)\gamma(v)-\gamma(v)\gamma(u)
+\lambda(\gamma(u),v)-\lambda(\gamma(v),u)
+ \kappa'(u,v).
$$
Consider the
 associative $\field$-algebra $F$ generated by
$V$ and the algebra $\field G$, i.e.,
$$F=T_{\field}(\field G\oplus V)/(g\ot h-gh, 1_{\field}-1_G: g,h\in G)$$
(identifying each $g$ with $(g,0)$).
Define an algebra homomorphism
$f:F \rightarrow \cH_{\lambda,\kappa'}$ by
$$
f(v)= v + \gamma(v) \quad\text{and}\quad f(g) = g
\quad\text{ for all } v \in V \text{ and } g \in G\, .
$$
One may use the
PBW conditions 
for $\cH_{\lambda,\kappa'}$ 
to show that the
relations defining $\cH_{0,\kappa}$ lie
in the kernel of $f$,
as in the proof of~\cite[Theorem~4.1]{SheplerWitherspoon2015}.
Thus $f$ factors through an onto, filtered 
algebra homomorphism
$$
f:
\cH_{0,\kappa}
\twoheadrightarrow \cH_{\lambda,\kappa'}\, .
$$

The $m$-th filtered components of
$\cH_{\lambda,\kappa'}$
and 
$\cH_{0,\kappa}$
are both  spanned over $\FF$
by the monomials
$v_1^{a_1}\dotsm v_n^{a_n}g$
for
$g \in G$ and $a_i\in \NN$ with $\sum_i a_i \leq  m$, for a fixed basis $v_1,\ldots, v_n$ of $V$.
This spanning set is in fact
a basis for
$(\cH_{\lambda,\kappa'})_m$
by the PBW property,
and thus
$$
\dim_{\FF}
(\cH_{0,\kappa})_m
\leq 
\dim_{\FF}
(\cH_{\lambda,\kappa'})_m
\, .
$$
The map $f$
restricts to a surjective linear transformation
of finite-dimensional $\FF$-vector spaces
on each filtered piece, 
$$f:
(\cH_{0,\kappa})_m
\twoheadrightarrow 
(\cH_{\lambda,\kappa'})_m\, ,
$$
and hence is injective on each filtered piece.
(Indeed, for any
$v_1^{a_1}\dotsm v_n^{a_n}g$ of filtered
degree $m$ in the
PBW basis for $\cH_{\lambda,\kappa'}$,
the element 
$
(v_1-\gamma(v_1))^{a_1}\dotsm 
(v_n-\gamma(v_n))^{a_n}g
$
in
$\cH_{0,\kappa}$
is a preimage under $f$ and also
has filtered degree $m$.)
Thus
$f$ is an isomorphism
of filtered algebras.
Notice that
$f$
in turn induces an isomorphism of graded algebras,
$
\gr\cH_{0,\kappa}\cong \gr\cH_{\lambda,\kappa'}\cong S(V)\# G\, ,
$
and $\cH_{0,\kappa}$ also exhibits the PBW property.
\end{proof}

The special 
case of \cref{LusztigyIsDrinfeldy} when $\kappa'\equiv 0$ is from
\cite{SheplerWitherspoon2015}.
Note that \cref{LusztigyIsDrinfeldy} fails 
over fields of positive characteristic
as the next example  from~\cite{SheplerWitherspoon2015}
shows:
not every algebra $\cH_{\lambda,0}$
(modeled on Lusztig's graded affine Hecke algebra) 
is isomorphic to an algebra $\cH_{0,\kappa}$
(modeled on the Drinfeld Hecke algebra).

\vspace{1ex}

\begin{example}
\em{
Let $G\cong \ZZ/2\ZZ$
be generated by
$
g=\left(
\begin{smallmatrix} 
1&1\\0&1
\end{smallmatrix}
\right)
$
acting on $V=\field_2^2$ 
with respect to an ordered basis $v,w$.
Consider the $\FF$-algebra $\cH_{\lambda,\kappa'}$
generated by $V$ and $\field G$
with relations
$$
gv=vg,\ \  gw=vg+wg+1,\ \ 
vw-wv=g\, .
$$
Then $\cH_{\lambda,\kappa'}$ satisfies
the PBW property
but is not 
isomorphic to  $\cH_{0, \kappa}$ for any 
parameter $\kappa$.
(Here, $\lambda(g,v)=\lambda(1,v)
=\lambda(1,w)=0$, 
$\lambda(g,w)=1$, and
$\kappa'(u,v)=g$.)
}
\end{example}

\vspace{2ex}

\cref{LusztigyIsDrinfeldy}
and PBW Condition~(2)
(with $\lambda\equiv 0$) imply
the next observation.
\begin{cor}
Every Drinfeld Hecke algebra in the nonmodular setting arises from a
parameter $\kappa$ that is {\em invariant}, 
i.e., satisfying
$$
\kappa(\, ^gu,\, ^g  v)g = g\kappa(u,v)
\quad\text{ for }g\in G,\ u,v \in V.
$$

\end{cor}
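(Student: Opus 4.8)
The plan is to combine the preceding two results directly. By \cref{LusztigyIsDrinfeldy}, any Drinfeld Hecke algebra $\cH_{\ld,\kappa'}$ in the nonmodular setting is isomorphic as a filtered algebra to an algebra $\cH_{0,\kappa}$ for some parameter $\kappa:V\ot V\rightarrow \FF G$, and the proof of that theorem notes that $\cH_{0,\kappa}$ again exhibits the PBW property. So it suffices to show: if $\cH_{0,\kappa}$ exhibits the PBW property, then $\kappa$ is invariant.

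First I would invoke \cref{PBWconditions} applied to the pair $(\ld,\kappa)=(0,\kappa)$. Most of the five PBW conditions become vacuous: condition (1) reads $0=0$, condition (5) reads $0=0$ since every term has an outer $\ld$, and condition (3) also holds trivially. Condition (2), however, with $\ld\equiv 0$, reduces to exactly
\[
\kappa(\, ^gu,\, ^gv)g-g\kappa(u,v)=0\quad\text{ in }\FF G
\]
for all $g\in G$ and $u,v\in V$, which is precisely the invariance statement claimed. (Condition (4) gives an additional constraint on $\kappa$, but we do not need it here.) Thus the PBW property of $\cH_{0,\kappa}$ forces $\kappa$ to be invariant, and transporting back along the isomorphism $\cH_{0,\kappa}\cong\cH_{\ld,\kappa'}$ completes the argument.

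There is essentially no obstacle: the only point requiring a word of care is making explicit that \cref{LusztigyIsDrinfeldy} not only produces the isomorphism but also guarantees $\cH_{0,\kappa}$ satisfies PBW (so that \cref{PBWconditions} may be applied to it) — and this is exactly the final sentence of the proof of \cref{LusztigyIsDrinfeldy}. One should also note that the statement of the corollary is about the existence of \emph{some} invariant $\kappa$ representing the algebra up to isomorphism, not that the original $\kappa'$ was invariant, so no further reconciliation is needed.
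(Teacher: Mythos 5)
Your argument is correct and matches the paper's own reasoning exactly: the paper derives this corollary from \cref{LusztigyIsDrinfeldy} together with PBW Condition~(2) specialized to $\lambda\equiv 0$, which is precisely what you do. Your added remark that the isomorphism from \cref{LusztigyIsDrinfeldy} preserves the PBW property (so \cref{PBWconditions} applies to $\cH_{0,\kappa}$) is a worthwhile point of care, already noted at the end of that theorem's proof.
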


\vspace{2ex}


\section{Deforming the group action}
In this section and the next,
we lay the framework for a complete classification of Drinfeld
Hecke algebras
for the symmetric group $\s_n$ acting 
by permutation matrices on $V\cong \FF^n$. 
In Sections $7$ and $8$,
we classify these algebras by giving the
parameters $\ld$ and  $\kappa$ such that $\cH_{\ld,\kappa}$ is PBW.
In this section we obtain the form of the parameter $\ld$, and in the next section 
we describe the parameter $\kappa$.
We assume $n> 2$ here and in Sections $6$ and
$7$ for ease with notation;
we give the PBW algebras
explicitly for $n=1,2,3$ in Section $8$.

We consider the action
of the symmetric group
by permutations.
Let $G=\s_n$ act on $V\cong\field^n$
by permuting basis 
elements $v_1, \ldots, v_n$
of $V$,
i.e., $v_i\mapsto \ ^gv_i=v_{g(i)}$
for $g$ in $G$.
We write $s_i=(i\ \, i+1)$  for the adjacent transpositions
generating $G$ for $1\leq i<n$ and set
$s_n=(n\ 1)$ for ease with later notation.
Recall that we assume
$2\neq\text{char}(\FF)\geq 0$.
Fix linear parameter functions
\[\kappa:V\otimes V\rightarrow \field G\ \ \text{ and }\ \ \ld:\field G \otimes V \rightarrow \field G,
\quad
\text{with $\kappa$ alternating.}
\]
\subsection*{Scalar parameters of freedom}
We show how each PBW algebra $\cH_{\lambda, \kappa}$ has parameter $\lambda$ determined by 
certain values on reflections.
Note
the group action in
\cref{groupactiononparameters}
induces the usual action
on
 $\lambda_1:G\otimes V \rightarrow \FF$
with
$$
(^{h^{-1}}\ld_1)
\big((i\ j), v_i\big)
=
\ld_1\big(h(i\ j)h^{-1}, \ ^{h}v_i\big)
=
\ld_1\big((h(i), h(j)), v_{h(i)}\big)\ . 
$$
We define scalars in $\FF$
for any linear parameter
function
$\lambda: \FF G\otimes V
\rightarrow \FF G$:
\begin{equation}
\begin{aligned}
\label{equation:alpha beta def}
\alpha_{ij}
&:=&
&\tfrac{1}{4}\ \lambda_1\big((i\ j), v_i-v_j\big),\\
\alpha_{ijk}
&:=&
&\alpha_{ij}\alpha_{jk}
+\alpha_{jk}\alpha_{ki}
+\alpha_{ki}\alpha_{ij}, \\
^{g}\alpha_{ij}
&:=&
&\tfrac{1}{4}\ \lambda_1\big(
(g(i)\ g(j)), v_{g(i)}-v_{g(j)}\big), \\
^{g}\alpha_{ijk}
&:=&
&\alpha_{g(i)g(j)}\alpha_{g(j)g(k)}+\alpha_{g(j)g(k)}\alpha_{g(k)g(i)}+\alpha_{g(k)g(i)}\alpha_{g(i)g(j)},\text{ and}\\
\beta_k
&:=&
&\tfrac{1}{2}\ \lambda_{s_k}\big(s_k, v_k-v_{k+1}\big), 
\end{aligned}\end{equation}
for $1\leq i,j,k\leq n$
with $i,j,k$ distinct.
We take indices on $\beta$ modulo $n$
throughout to more easily cyclically permute parameters in later results.
We will see that
if 
$\cH_{\lambda,\kappa}$
satisfies the PBW property, then $\beta_k
=\ld_{s_k}(s_k, v_k)$ for all $k$
(by \cref{lemma:opposite on reflections}).

\vspace{1ex}

\subsection*{Determination of $\lambda$}
We prove in this section
that every PBW algebra $\cH_{\lambda, \kappa}$ 
 has
 parameter $\ld$ determined by its values $\alpha_{ij}$ and 
 $\beta_k$:
\begin{thm}\label{ld def} 
A parameter
 $\lambda$
 satisfies PBW Conditions~(1) and~(3) if and only if 
\begin{equation}
\label{equation: definition of ld} 
\ld(g, v_i)\
=\ \sum_{k=0}^{g(i)-i+n-1} \beta_{i+k}\, g +\sum_{1\ \leq\ j\neq i\ \leq\ n}(\alpha_{ij}-\ ^{g}\alpha_{ij})\,
g(i\ j)
\
\end{equation}
for all $g \in G$
and $1\leq i\leq n$ with $\beta_1+\cdots+\beta_n=0$.
\end{thm}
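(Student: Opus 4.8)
The plan is to prove the two implications of the equivalence separately, using throughout that PBW Condition~(1) is a $1$-cocycle identity: applying $\ld(gh,v)=\ld(g,{}^hv)h+g\ld(h,v)$ repeatedly along a word for $g$ in the Coxeter generators $s_1,\dots,s_{n-1}$ shows that a parameter satisfying~(1) is determined by the functions $\ld(s_k,\cdot)$. So both directions reduce to understanding $\ld$ on the $s_k$ and checking that this cocycle propagation agrees with the closed formula. Throughout, let $\widetilde\ld$ denote the function defined by~\eqref{equation: definition of ld} from the scalars in~\eqref{equation:alpha beta def}.

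Assume first that $\ld=\widetilde\ld$ and $\beta_1+\cdots+\beta_n=0$; I would substitute into Conditions~(1) and~(3). Writing~(1) as $\ld(gh,v_i)=\ld(g,v_{h(i)})h+g\ld(h,v_i)$, the right-hand side splits into a part supported on the group element $gh$ (from the $\beta$-sums) and a part supported on the elements $gh(i\ j)$ with $j\neq i$ (from the $\alpha$-terms, once one uses $g(h(i)\ h(j))h=gh(i\ j)$ and reindexes $j\mapsto h(j)$ in the summand $\ld(g,v_{h(i)})h$); as these group elements are distinct, one matches coefficients separately. Matching the coefficient of $gh$ amounts to the identity $\sum_{k=0}^{gh(i)-i+n-1}\beta_{i+k}=\sum_{k=0}^{g(h(i))-h(i)+n-1}\beta_{h(i)+k}+\sum_{k=0}^{h(i)-i+n-1}\beta_{i+k}$, which holds because concatenating the two runs of consecutive $\beta$'s on the right reproduces the run on the left plus one extra full cyclic period, and a full period sums to $\beta_1+\cdots+\beta_n=0$. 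Matching each $gh(i\ j)$-coefficient telescopes: the $\alpha$'s contribute $\alpha_{ij}-\alpha_{h(i)h(j)}+\alpha_{h(i)h(j)}-\alpha_{gh(i)gh(j)}=\alpha_{ij}-{}^{gh}\alpha_{ij}$. For Condition~(3), note from~\eqref{equation: definition of ld} that $\ld_h(g,v_i)=0$ unless $h=g$ (then both sides of~(3) vanish since ${}^gu-{}^gu=0$) or $h=g(i\ l)$ for some $l\neq i$, in which case $\ld_h(g,v_p)\neq 0$ only for $p\in\{i,l\}$, with $\ld_h(g,v_l)=-\ld_h(g,v_i)$ because $\alpha_{li}=-\alpha_{il}$; Condition~(3) then follows by cases on $p,q$, the mixed cases collapsing because ${}^hv_q={}^gv_q$ whenever $q\notin\{i,l\}$, and the case $\{p,q\}=\{i,l\}$ reducing to a tautology.

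For the converse, assume $\ld$ satisfies Conditions~(1) and~(3). Condition~(1) with $g=h=1$ and $\text{char}(\FF)\neq 2$ gives $\ld(1,\cdot)=0$, and with it the recursions $g\ld(g^{-1},v)=-\ld(g,{}^{g^{-1}}v)g^{-1}$ and $\ld(g^j,v)=\sum_{i=0}^{j-1}g^{j-1-i}\ld(g,{}^{g^i}v)g^i$. Condition~(3) forces $1-h^{-1}g$ to have rank at most $1$ whenever $\ld_h(g,\cdot)\neq 0$; since the only permutation matrices fixing a hyperplane are transpositions, $\ld(g,v_i)$ is supported on $\{g\}\cup\{g(i\ j):j\neq i\}$, with $\ld_h(g,v)=0$ for $v\in V^{h^{-1}g}$ when $h\neq g$, as in \cref{cor:lambda}. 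From the involution relation $s_k\ld(s_k,v)=-\ld(s_k,{}^{s_k}v)s_k$ and comparison of $1_G$-coefficients one obtains $\ld_{s_k}(s_k,v_k)=-\ld_{s_k}(s_k,v_{k+1})=\beta_k$ and $\ld_{s_k}(s_k,v_i)=0$ for $i\neq k,k+1$ (again using $\text{char}(\FF)\neq 2$), and, from the analogous relation for a general transposition, $\ld_1((i\ j),v_i)=2\alpha_{ij}$. The relation $\beta_1+\cdots+\beta_n=0$ then emerges on computing $\ld_{s_n}(s_n,v_n)$ two ways: expanding $s_n=(1\ n)=c^{-1}s_1c$ for the $n$-cycle $c=s_1\cdots s_{n-1}$, applying the cocycle, and tracking only the already determined diagonal coefficients of $\ld(c,\cdot)$ and $\ld(s_1,\cdot)$ gives $\ld_{s_n}(s_n,v_n)=-(\beta_1+\cdots+\beta_{n-1})$, while the involution relation for $s_n$ gives $\ld_{s_n}(s_n,v_n)=\beta_n$. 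With $\beta_1+\cdots+\beta_n=0$ in hand, $\widetilde\ld$ satisfies~(1) by the first part, so it suffices to check $\ld(s_k,v_i)=\widetilde\ld(s_k,v_i)$ for each $k$ and then appeal to uniqueness of the cocycle extension; the diagonal and $\ld_1$-coefficients already agree, and for the remaining three-cycle coefficients one uses the cocycle relations among transpositions (e.g. $s_k(k\ j)=(k+1\ j)s_k$) together with the reflection constraints of \cref{cor:lambda} to identify them with the differences $\alpha_{ij}-{}^{s_k}\alpha_{ij}$.

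I expect the last step of the converse to be the main obstacle: showing that the off-diagonal coefficients of $\ld$ on the generators are exactly the differences $\alpha_{ij}-{}^{g}\alpha_{ij}$ demands a careful and somewhat lengthy cocycle computation bookkeeping the three-cycle (and, in intermediate reductions, longer) contributions, and this is where the reflection constraints from Condition~(3) and \cref{cor:lambda} are indispensable. By comparison, the first implication is routine once Condition~(1) is separated into its $\beta$-supported and $\alpha$-supported halves, and deriving $\beta_1+\cdots+\beta_n=0$ is a short diagonal-coefficient computation on $c$.
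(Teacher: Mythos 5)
Your architecture is sound, and the direction ``the formula \eqref{equation: definition of ld} implies Conditions~(1) and~(3)'' is essentially complete and agrees with the paper's argument: splitting Condition~(1) into the $gh$-supported and $gh(i\ j)$-supported pieces, absorbing the extra full cyclic period of $\beta$'s via $\beta_1+\cdots+\beta_n=0$, and telescoping the $\alpha$'s is exactly what is needed, and your case analysis for Condition~(3) is correct. Your derivations in the converse of the support condition, the diagonal coefficients $\beta_k$, the identity coefficients $2\alpha_{ij}$, and the relation $\beta_1+\cdots+\beta_n=0$ also go through (the paper obtains the last two more directly from $\lambda_g(g,v)=0$ for $v\in V^g$ applied to cycles, rather than from your $s_n=c^{-1}s_1c$ computation, but both routes work).

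The genuine gap is the step you yourself flag as the main obstacle: in the converse you never actually determine the off-diagonal coefficients $\ld_{g(i\ j)}(g,v_i)$, and ``cocycle relations among transpositions together with the reflection constraints of \cref{cor:lambda}'' is not a sufficient recipe, because a single application of Condition~(1) to a factorization of a transposition $g=(i\ k)$ necessarily passes through a $3$-cycle and introduces the unknown quantities $\ld_{(i\ j\ k)(i\ j)}\big((i\ j\ k),\cdot\big)$, whose nonzero values are not controlled by \cref{cor:lambda}. The missing idea is to use \emph{two} factorizations simultaneously: write $(i\ k)=(i\ j\ k)(i\ j)=(j\ k)(i\ j\ k)$, expand $\ld\big((i\ k),v_i-v_j\big)$ by Condition~(1) both ways, and compare the coefficients of $(i\ k)(i\ j)$. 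The two unknown $3$-cycle contributions are $\ld_{(i\ j\ k)(i\ j)}\big((i\ j\ k),v_j-v_i\big)$ and $\ld_{(i\ j\ k)(i\ j)}\big((i\ j\ k),v_i-v_j\big)$, which cancel by linearity upon adding the two identities; what survives is expressed purely in the identity coefficients $\ld_1\big((i\ j),v_i-v_j\big)=4\alpha_{ij}$ and $\ld_1\big((j\ k),v_j-v_k\big)=4\alpha_{jk}$, giving $2\ld_{(i\ k)(i\ j)}\big((i\ k),v_i-v_j\big)=4\big(\alpha_{ij}-{}^{(i\ k)}\alpha_{ij}\big)$, and \cref{lemma:opposite on reflections} converts this into the needed statement. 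One then extends to arbitrary $g$ by induction on reflection length, writing $g=g_1g_2$ with shorter factors and telescoping; your alternative of checking only the generators $s_k$ and invoking uniqueness of the cocycle extension is valid in principle, but it still requires the transposition case above, so it does not avoid this computation. Without the cancellation trick, the ``careful and somewhat lengthy cocycle computation'' you defer does not obviously close.
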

We collect some necessary observations
before giving the proof of this theorem
at the end of this section.

\vspace{0ex}

\subsection*{Lemmas for the proof of \cref{ld def}}
Recall the {\em (absolute) reflection
  length} function $\length: \mathfrak{S}_n\rightarrow \ZZ_{\geq 0}$ on $\mathfrak{S}_n$
    which gives 
  the minimal number $\length(g)$ of
  transpositions in a factorization of $g$
  into transpositions.
  The following observation is well-known
for reflection groups  over $\RR$ (e.g., see \cite{Carter}, \cite{FosterGreenwood},
  \cite{SheplerWitherspoonAdvances})
and we include a proof for
  the symmetric group acting
  over arbitrary fields
  for the sake of completeness.

 \begin{lemma}
  \label{codimlemma}
  For $g,h$ in $\mathfrak{S}_n$,
  $\length(g)=\codim V^g$,
  and $V^g\cap V^h= V^{gh}$ 
  when
  $\length(g)+\length(h)=\length(gh)$. 
 \end{lemma}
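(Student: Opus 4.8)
The plan is to prove both assertions by working with the explicit combinatorics of permutations acting on $V = \field^n$ by permuting the coordinate basis vectors $v_1, \dots, v_n$. First I would establish the identity $\length(g) = \codim V^g$. The key observation is that for a permutation $g$ with cycle type given by cycles of lengths $\ell_1, \dots, \ell_r$ (counting fixed points as $1$-cycles), the fixed space $V^g$ decomposes as a direct sum over the cycles: on each cycle $(i_1\ i_2\ \cdots\ i_\ell)$ the fixed subspace of $\Span(v_{i_1},\dots,v_{i_\ell})$ is one-dimensional, spanned by $v_{i_1}+\cdots+v_{i_\ell}$ (note this uses only that we are permuting basis vectors, so it holds over any field, including when $\mathrm{char}(\field)$ divides some $\ell_m$). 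Hence $\dim V^g = r$ equals the number of cycles, so $\codim V^g = \sum_m (\ell_m - 1)$. On the other hand, a cycle of length $\ell$ factors into $\ell - 1$ transpositions and no fewer, and disjoint cycles factor independently, so $\length(g) = \sum_m (\ell_m - 1)$ as well; a short argument (e.g. each transposition changes the number of cycles by exactly one) gives the lower bound $\length(g) \geq \codim V^g$ in general. This yields the first claim.

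Next I would handle the second claim. The inclusion $V^g \cap V^h \subseteq V^{gh}$ is immediate, since a vector fixed by both $g$ and $h$ is fixed by $gh$. For the reverse inclusion under the hypothesis $\length(g) + \length(h) = \length(gh)$, I would argue by dimension count. Using the first part, $\codim V^{gh} = \length(gh) = \length(g) + \length(h) = \codim V^g + \codim V^h$. By the standard inclusion–exclusion (modular law) for subspaces, $\codim(V^g \cap V^h) \leq \codim V^g + \codim V^h = \codim V^{gh}$, so $\dim(V^g\cap V^h) \geq \dim V^{gh}$. Combined with $V^g \cap V^h \subseteq V^{gh}$, this forces equality $V^g \cap V^h = V^{gh}$.

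The main obstacle I anticipate is making the lower bound $\length(g) \geq \codim V^g$ fully rigorous in the modular case without hand-waving, and confirming that the cycle-by-cycle description of $V^g$ genuinely needs no characteristic hypothesis. For the length bound, the cleanest route is: any transposition $t$ satisfies $\codim V^t = 1$, and for any $g$ and transposition $t$ one has $\codim V^{gt} \leq \codim V^g + 1$ (since $V^g \cap V^t \subseteq V^{gt}$ and $V^t$ is a hyperplane), so induction on a shortest factorization of $g$ into transpositions gives $\codim V^g \leq \length(g)$; the reverse inequality comes from the explicit factorization of each cycle, giving equality. For the fixed-space computation, the point is simply that permuting basis vectors is a permutation of a basis, so the fixed space is spanned by sums of basis vectors over orbits, a statement of linear algebra independent of the field — I would spell this out in one or two sentences. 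Everything else is routine.
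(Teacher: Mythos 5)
Your proof is correct, and both halves land on the same key facts as the paper, but you reach the first identity by a more self-contained route. The paper quotes the known result $\length(g)=\codim V_{\RR}^{g}$ for the real permutation representation (citing the real-reflection-group literature) and then only checks that $\codim V_{\RR}^{g}=\codim V^{g}$ over an arbitrary field via the orbit-sum description of fixed vectors --- exactly the cycle-by-cycle observation you make. You instead prove the whole identity from scratch: the orbit sums give $\dim V^{g}=\#\{\text{cycles}\}$ in any characteristic, the factorization of an $\ell$-cycle into $\ell-1$ transpositions gives $\length(g)\leq\codim V^{g}$, and the bound $\codim V^{gt}\leq\codim V^{g}+1$ for a transposition $t$ (or the fact that a transposition changes the cycle count by one) gives the reverse inequality. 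What your version buys is independence from the characteristic-zero theory, at the cost of a short extra combinatorial argument; the paper's version is quicker given the references. The second assertion is handled identically in both: $V^{g}\cap V^{h}\subseteq V^{gh}$ always holds, and when $\length(g)+\length(h)=\length(gh)$ the subadditivity $\codim(V^{g}\cap V^{h})\leq\codim V^{g}+\codim V^{h}=\codim V^{gh}$ forces the two subspaces to coincide. No gaps.
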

 
\begin{proof}
For $\mathfrak{S}_n$
acting on $V_\RR=\RR^n$ by permutation
of basis vectors, 
$\length(g)=\codim V_\RR^g$.
 But $\codim V_\RR^g
  =  \codim V^g$ (just consider
  the decomposition of $g$ into disjoint
  cycles and take orbit sums for invariant
  vectors, for example). Hence
$  \length(g)+\length(h)=\length(gh)$ if and only if
  $
    \codim V^{g} 
  + \codim V^{h}
  =\codim V^{gh} 
$.
In this case, $V^g\cap V^h=V^{gh}$
since
   $V^g\cap V^h\subset V^{gh}$
implies that
$\codim V^g+\codim V^h - \codim(V^{gh}) \geq 
\codim V^g+\codim V^h - \codim(V^g\cap V^h) \geq 0$.
\end{proof}
Our next observation
follows from PBW Condition~(3) with $h=g(i\ j)$, $u=v_i, v=v_j$:
\begin{lemma}
\label{lemma:opposite on reflections} 
If the parameter function $\ld$ 
satisfies PBW Condition~(3), then
\[
\ld_{g(i\ j)}(g, v_i)=-\ld_{g(i\ j)}(g, v_j)=\tfrac{1}{2}\ld_{g(i\ j)}(g, v_i-v_j)
\quad\text{ for } i\neq j,\
g\in G.
\]
In particular, 
$\ld_1\big((i\ j), v_i\big)
=
-\ld_1\big((i\ j), v_j\big)
=
\tfrac{1}{2}
\ld_1\big((i\ j), v_i-v_j\big)$.
\end{lemma}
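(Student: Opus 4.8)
The plan is to extract everything from PBW Condition~(3), which I recall states that
$\ld_h(g,v)(\, ^hu-\, ^gu)=\ld_h(g,u)(\, ^hv-\, ^gv)$ in $V$ for all $g,h\in G$ and $u,v\in V$. The natural specialization is $h=g(i\ j)$, $u=v_i$, $v=v_j$, for fixed distinct $i,j$. With this choice I first compute the two vector differences $\, ^hv_i-\, ^gv_i$ and $\, ^hv_j-\, ^gv_j$ explicitly. Since $h=g(i\ j)$ acts on the index $i$ as $g(j)$ and on $j$ as $g(i)$, while $g$ sends $i\mapsto g(i)$ and $j\mapsto g(j)$, one gets $\, ^hv_i-\, ^gv_i = v_{g(j)}-v_{g(i)}$ and $\, ^hv_j-\, ^gv_j = v_{g(i)}-v_{g(j)}$. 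The key point is that these two differences are negatives of one another: set $z := v_{g(i)}-v_{g(j)}$, so the first difference is $-z$ and the second is $+z$.

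Substituting into Condition~(3) with $h=g(i\ j)$ gives $\ld_{g(i\ j)}(g,v_i)\cdot(-z) = \ld_{g(i\ j)}(g,v_j)\cdot z$, i.e.\ $\bigl(\ld_{g(i\ j)}(g,v_i)+\ld_{g(i\ j)}(g,v_j)\bigr)\,z = 0$ in $V$. Since $i\neq j$ forces $g(i)\neq g(j)$, the vector $z=v_{g(i)}-v_{g(j)}$ is nonzero, and as these are distinct basis vectors the scalar coefficient must vanish: $\ld_{g(i\ j)}(g,v_i) = -\ld_{g(i\ j)}(g,v_j)$. The middle expression $\tfrac12\ld_{g(i\ j)}(g,v_i-v_j)$ then follows immediately by $\field$-linearity of $\ld$ in the second argument together with the identity just obtained, using that $\operatorname{char}\field\neq 2$ so that $\tfrac12$ makes sense. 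The ``in particular'' statement is just the case $g=1$, where $g(i\ j)=(i\ j)$ and $\, ^gv=v$.

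I do not anticipate a genuine obstacle here; the only point requiring a moment's care is bookkeeping the permutation action on indices correctly (it is easy to swap $g(i)$ and $g(j)$ by accident), and noting explicitly that the hypothesis $\operatorname{char}\field\neq 2$ is what licenses dividing by $2$ in the last equality.
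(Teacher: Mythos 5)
Your proposal is correct and is exactly the paper's argument: the paper proves this lemma by the same specialization of PBW Condition~(3) to $h=g(i\ j)$, $u=v_i$, $v=v_j$, with the conclusion following because $v_{g(i)}-v_{g(j)}\neq 0$ and $\operatorname{char}\field\neq 2$. No differences worth noting.
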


\begin{lemma}\label{gg fixed}
If the parameter function 
$\lambda$
satisfies PBW Condition~(1),  then $\lambda_c(c, v)=0$ 
for all $c$ in $G$ and $v$ in $V^c$,
  the fixed point space.
\end{lemma}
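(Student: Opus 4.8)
The plan is to pass to the ``diagonal'' coefficient of $\lambda$ and show it satisfies a cocycle identity which, evaluated along a reduced factorization of $c$ into transpositions, collapses to zero. Write $\mu(g,v):=\lambda_g(g,v)$ for the coefficient of the group element $g$ in $\lambda(g,v)\in\FF G$; the goal is precisely $\mu(c,v)=0$ for $v\in V^c$. First I would record that $\lambda(1,*)\equiv 0$: setting $g=h=1$ in PBW Condition~(1) gives $\lambda(1,v)=\lambda(1,v)\cdot 1+1\cdot\lambda(1,v)=2\lambda(1,v)$, so $\lambda(1,v)=0$ because $\mathrm{char}\,\FF\neq 2$; in particular $\mu(1,v)=0$.

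Next I would extract the cocycle relation by comparing the coefficient of the group element $gh$ on the two sides of PBW Condition~(1), namely $\lambda(gh,v)=\lambda(g,{}^hv)\,h+g\,\lambda(h,v)$. The left side contributes $\mu(gh,v)$; in $\lambda(g,{}^hv)\,h=\sum_{x\in G}\lambda_x(g,{}^hv)\,xh$ only the term $x=g$ lands on $gh$, contributing $\mu(g,{}^hv)$; and in $g\,\lambda(h,v)=\sum_{x\in G}\lambda_x(h,v)\,gx$ only $x=h$ lands on $gh$, contributing $\mu(h,v)$. This gives
$$\mu(gh,v)=\mu(g,{}^hv)+\mu(h,v)\qquad\text{for all }g,h\in G,\ v\in V.$$
Applied to a transposition $t$ and a vector $v\in V^t$ (so that ${}^tv=v$), it yields $0=\mu(1,v)=\mu(t^2,v)=2\mu(t,v)$, hence $\mu(t,v)=0$, again using $\mathrm{char}\,\FF\neq2$.

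For a general $c\in\mathfrak{S}_n$ and $v\in V^c$, I would fix a reduced factorization $c=t_1t_2\cdots t_\ell$ into $\ell=\ell(c)$ transpositions. By \cref{codimlemma}, every terminal subword $t_{i+1}\cdots t_\ell$ is again reduced, $V^c=V^{t_1}\cap\cdots\cap V^{t_\ell}$, and $V^{t_{i+1}}\cap\cdots\cap V^{t_\ell}\subseteq V^{t_{i+1}\cdots t_\ell}$; thus $v$ is fixed by each $t_i$ and by each terminal subword. Telescoping the cocycle identity along this factorization then gives $\mu(c,v)=\sum_{i=1}^{\ell}\mu\!\left(t_i,\,{}^{t_{i+1}\cdots t_\ell}v\right)=\sum_{i=1}^{\ell}\mu(t_i,v)=0$, which is exactly $\lambda_c(c,v)=0$. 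The only step needing care is the coefficient bookkeeping that produces the cocycle identity; after that the proof uses just \cref{codimlemma} and the order-$2$ fact for transpositions, both harmless since $\mathrm{char}\,\FF\neq2$.
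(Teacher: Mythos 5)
Your proof is correct and is essentially the paper's argument: both rest on extracting the coefficient of $gh$ from PBW Condition~(1) to get the additivity $\lambda_{gh}(gh,v)=\lambda_g(g,{}^hv)+\lambda_h(h,v)$, handle transpositions via $t^2=1$ and $\mathrm{char}\,\FF\neq 2$, and use \cref{codimlemma} to see that $v\in V^c$ is fixed by each factor of a reduced factorization. The paper writes this as an induction on reflection length with $c=ab$, $b$ a transposition; your telescoping sum along $c=t_1\cdots t_\ell$ is just that induction unrolled.
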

\begin{proof}
  We induct on the (absolute) reflection
  length $\ell(c)$ of $c$ using
  \cref{cor:lambda}~(1),
  which follows directly
  from PBW Condition~(1) 
(see the proof of~\cite[Cor.\ 3.3]{SheplerWitherspoon2015}).
  First suppose $c$ is a reflection itself 
  with $v \in V^c$. Then by PBW Condition~(1),
  \[ 
  0=\lambda(1, v)
  =
  \lambda(cc, v)=\lambda(c, \ ^cv)c+c\lambda(c, v)
  =\lambda(c, v)c+c\lambda(c, v).
  \]
 The coefficient of the identity group element on the right-hand side is $0=2\ld_c(c, v)$.
 
 Now suppose the claim holds for all group elements $g$ with $\ell(g)=k$ and that
  $\length(c)=k+1$. Then $c=ab$ for some $a$ 
 with $\length(a)=k$ and some transposition $b$.  As
 $\length(ab)=\length(a)+\length(b)$,
the vector $v$ lies in $V^{ab}=V^{a}\cap V^b$ by \cref{codimlemma}.
 By PBW Condition~(1),
 $
 \lambda(c, v)=\lambda(a, ^bv)b+a\lambda(b, v),$
 and the result follows from
 the induction hypothesis applied to the terms with $c$:
 \[
 \lambda_c(c, v)=\lambda_{a}(a, v)+\lambda_b(b, v)=0.
 \]
\end{proof}

\begin{lemma}
\label{gg depends on trans only}
 \label{betas sum to zero} 
  Say the parameter 
  function $\lambda$ satisfies PBW Condition~(1).
  Then 
  \begin{enumerate}
  \setlength{\itemindent}{-.25in}
  \item[(a)]
    For any $g\in G$ and any $i$, 
  $
  \lambda_g(g, v_i)
  =
  \lambda_{(i\ g(i))}
  \big((i\ g(i)), v_i\big)
  =
  -\ld_{(i\ g(i))}\big((i\ g(i)), v_{g(i)}\big)$.
  \item[(b)]
 For any $k$-cycle $(l_1\ l_2\ 
\cdots \ l_k)$
in $G$,
$$
\ld_{(l_1\ l_{2})}\big((l_1\ l_{2}), v_{l_1}\big)
+
\ld_{(l_2\ l_{3})}\big((l_2\ l_{3}), v_{l_2}\big)
+\cdots+
\ld_{(l_k\ l_{1})}\big((l_k\ l_{1}), v_{l_k}\big)
=
0\, .
  $$
  \item[(c)]
  For any $i<j$,
  $$
  \begin{aligned}
\ld_{(i\ j)}&\big((i\ j), v_{i}\big)
=
\ld_{s_i}\big(s_i, v_{i}\big)
+
\ld_{s_{i+1}}\big(s_{i+1}, v_{i+1}\big)
+\cdots+
\ld_{s_{j-1}}\big(s_{j-1}, v_{j-1}\big)\, 
\, .
\end{aligned}
$$
\end{enumerate}
 \end{lemma}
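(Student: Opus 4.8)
The plan is to derive all three statements from PBW Condition~(1), the single identity $\ld(gh,v)=\ld(g,{}^hv)h+g\,\ld(h,v)$, by tracking coefficients of individual group elements and using \cref{gg fixed}, \cref{codimlemma}, and \cref{lemma:opposite on reflections} as needed. Part~(a) is the base: given $g\in G$ and an index $i$, set $t=(i\ g(i))$ and write $g=t\cdot (tg)$. Note that $tg$ fixes $i$ (since $g$ sends $i$ to $g(i)$ and then $t$ sends $g(i)$ back to $i$), so $\length(tg)<\length(g)$ and in fact $\length(t)+\length(tg)=\length(g)$, whence $V^t\cap V^{tg}=V^g$ by \cref{codimlemma}; in particular $v_i$ is fixed by $tg$ only when $g(i)=i$, so one must be slightly careful and instead apply Condition~(1) to the factorization $g=t\cdot(tg)$ evaluated at $v_i$. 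This gives $\ld(g,v_i)=\ld(t,{}^{tg}v_i)(tg)+t\,\ld(tg,v_i)$. Now extract the coefficient of $g$ on both sides: on the left it is $\ld_g(g,v_i)$; the term $t\,\ld(tg,v_i)$ contributes the coefficient of $tg$ in $\ld(tg,v_i)$, which is $\ld_{tg}(tg,v_i)$, and since $tg$ fixes... — here the clean route is instead to induct on $\length(g)$ exactly as in the proof of \cref{gg fixed}, showing simultaneously that $\ld_g(g,v_i)=\ld_t(t,v_i)$ where $t=(i\ g(i))$, the point being that at each inductive step the ``extra'' reflection either moves $i$ (and gets absorbed into updating $t$) or fixes $i$ (and contributes zero by \cref{gg fixed} applied with $v\in V^{\text{that reflection}}$). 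The final equality $\ld_{(i\ j)}((i\ j),v_i)=-\ld_{(i\ j)}((i\ j),v_j)$ in~(a) is then immediate from the last sentence of \cref{lemma:opposite on reflections}.

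For part~(b), fix a $k$-cycle $c=(l_1\ l_2\ \cdots\ l_k)$ and apply part~(a) with $g=c$ successively at the vectors $v_{l_1},\dots,v_{l_k}$: since $c(l_m)=l_{m+1}$ (indices mod $k$), part~(a) gives $\ld_c(c,v_{l_m})=\ld_{(l_m\ l_{m+1})}\big((l_m\ l_{m+1}),v_{l_m}\big)$. Summing over $m=1,\dots,k$, the left side is $\sum_m \ld_c(c,v_{l_m})=\ld_c\big(c,\textstyle\sum_m v_{l_m}\big)$, and $\sum_m v_{l_m}$ lies in $V^c$ (it is the orbit sum), so this vanishes by \cref{gg fixed}. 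That is exactly the claimed relation.

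Part~(c) is the specialization of~(b) to the transposition $c=(i\ j)$ combined with an induction that rewrites a long transposition as a ``product chain'' of adjacent ones. Concretely, for $i<j$ use the factorization $(i\ j)=(i\ i{+}1)(i{+}1\ j)(i\ i{+}1)$, or more systematically induct on $j-i$: Condition~(1) applied to $(i\ j)=s_i\cdot\big(s_i(i\ j)\big)=s_i\cdot (i{+}1\ j)$ evaluated at $v_i$, together with part~(a) to identify the relevant self-coefficients, reduces $\ld_{(i\ j)}((i\ j),v_i)$ to $\ld_{s_i}(s_i,v_i)+\ld_{(i+1\ j)}((i{+}1\ j),v_{i+1})$, and the inductive hypothesis finishes it. One should double-check the index bookkeeping when $g=s_i$ moves $i$ to $i{+}1$ versus leaving the other indices alone, so that the cross-terms $\alpha$-type coefficients cancel and only the $\beta$-type (self-coefficient) terms survive; alternatively, just apply part~(b) to the $(j-i+1)$-cycle $(i\ i{+}1\ \cdots\ j)$ after checking its relation to the transposition $(i\ j)$, though the direct telescoping is cleaner.

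The main obstacle I expect is the index/careful-factorization bookkeeping in~(a): one must choose the factorization $g=t\cdot(tg)$ (not $g=(tg)\cdot t$) so that the ``remainder'' $tg$ fixes $i$ and the reflection-length is additive, and then argue that the only coefficient of $g$ that can appear on the right-hand side of Condition~(1) after iterating is the single reflection $(i\ g(i))$ — every other reflection introduced along the way either fails to equal $g$ as a group element or contributes a term killed by \cref{gg fixed}. Once~(a) is pinned down precisely, parts~(b) and~(c) are short formal consequences via orbit-sum vanishing and telescoping.
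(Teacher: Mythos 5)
Your overall route is the paper's: for (a) factor $g$ through the transposition $t=(i\ g(i))$ so that the complementary factor fixes $v_i$, extract the coefficient of $g$ from PBW Condition~(1), and kill the unwanted term with \cref{gg fixed}; then (b) is the orbit-sum vanishing $\ld_g\big(g,\sum_m v_{l_m}\big)=0$ combined with (a), and (c) follows from (b) applied to the cycle $(i\ i{+}1\ \cdots\ j)$. (The paper factors $g=\big(g\,(i\ j)\big)\cdot(i\ j)$ so that the other factor fixes $v_{g(i)}$ -- a mirror image of your choice -- and needs no induction.) However, three points need repair. First, you abandon the direct computation in (a) one step before it closes and retreat to a vague induction whose key claim (``the extra reflection either moves $i$ and gets absorbed, or fixes $i$ and contributes zero'') is never justified; the direct route does finish: the coefficient of $g$ in $\ld(t,v_i)(tg)$ is $\ld_t(t,v_i)$, and the coefficient of $g$ in $t\,\ld(tg,v_i)$ is $\ld_{tg}(tg,v_i)$, which vanishes by \cref{gg fixed} because $(tg)(i)=i$. (Your parenthetical ``$v_i$ is fixed by $tg$ only when $g(i)=i$'' is false for the same reason, and the detour through \cref{codimlemma} is unnecessary here.) Second, the sign flip $\ld_{(i\ j)}\big((i\ j),v_i\big)=-\ld_{(i\ j)}\big((i\ j),v_j\big)$ does \emph{not} follow from \cref{lemma:opposite on reflections}: that lemma assumes PBW Condition~(3), which is not a hypothesis of this lemma, and even granting it, taking $g=(i\ j)$ there controls $\ld_1$, the coefficient of the identity, not the coefficient of $(i\ j)$. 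The correct source is again \cref{gg fixed}, applied to $v_i+v_j\in V^{(i\ j)}$. This is not cosmetic, since your proof of (c) relies on this sign flip.

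Third, your ``cleaner'' telescoping alternative for (c) rests on the false factorization $s_i\,(i\ j)=(i{+}1\ j)$; the product $s_i\,(i\ j)$ is a $3$-cycle, so that induction does not run as written. Fortunately the other route you mention -- apply (b) to the $(j-i+1)$-cycle $(i\ i{+}1\ \cdots\ j)$ and then use the (corrected) sign flip from (a) to convert $\ld_{(i\ j)}\big((i\ j),v_j\big)$ into $-\ld_{(i\ j)}\big((i\ j),v_i\big)$ -- is exactly the paper's argument and needs no further work. With these repairs (finish the coefficient extraction in (a), replace the citation for the sign flip by \cref{gg fixed}, and drop or fix the telescoping variant), your proposal coincides with the paper's proof.
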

  \begin{proof}
  For (a), assume that $v_i \notin V^g$, else the claim
  follows from \cref{gg fixed}.
    Set $j=g(i)$ and
    $h=g(i\ j)$.
    Then $v_{j}\in V^{h}$ and PBW Condition~(1)
    implies that
        \[
    \begin{aligned} \lambda(g, v_i)
    =
    \lambda(h, ^{(i\ j)}v_i)(i\ j)
    +
    h\lambda\big((i\ j), v_i\big)
    = 
    \lambda(h, v_{j})(i\ j)+h\lambda\big((i\ j), v_i\big).
    \end{aligned}
    \]
    We isolate the coefficient of $g$ and apply Lemma~\ref{gg fixed} twice:
    \[\lambda_g(g, v_i)=\lambda_{h}(h, v_{j})
    +
    \lambda_{(i\ j)}
    \big((i\ j), v_i\big)
    =
    \lambda_{(i\ j)}\big((i\ j), v_i\big)
    =
   -\lambda_{(i\ j)}\big((i\ j), v_j\big).
    \]    
For (b), 
\cref{gg fixed} and (a) imply
that,
for $g=(l_1\ \cdots\ l_k)$,
\[
\begin{aligned} 
0=\ld_g(g, \sum_{i=1}^kv_{l_i})=\sum_{i=1}^k\ld_g(g, v_{l_i})
=
\sum_{i=1}^{k-1}
\ld_{(l_i\ l_{i+1})}
\big((l_i\ l_{i+1}), v_{l_i}\big) + \ld_{(l_k\ l_1)}
\big((l_k\ l_1), v_k\big).
\end{aligned}
\]
For (c), just use (b) with cycle $(i\ i+1\ \cdots\ j)$ and  (a).
\end{proof}

\begin{remark}{\em 
Note that
\cref{gg depends on trans only}
implies that there are at most $n-1$ choices determining the values $\ld_g(g, v)$ for $g$ in $G$ and $v$ in $V$ in a PBW
algebra $\cH_{\ld, \kappa}$.
Indeed, 
the values of $\ld_g(g, v)$ are determined
by the values $\ld_{(i\ j)}((i\ j), v_i)$ for $i<j$
by part (a), which are determined by the values
$\beta_k=\ld_{s_k}(s_k, v_k)$ for $1\leq k< n$
by part (c).
}
\end{remark}


\begin{lemma}
 If the parameter function $\lambda$
 satisfies 
 PBW Conditions~(1) and~(3), 
 then for all 
$g \in G$,
 $
  \ld(g, v_1+
  \cdots +v_n)=0
  $.
\end{lemma}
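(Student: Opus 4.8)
The plan is to fix an arbitrary $g \in G$ and evaluate $\ld(g, v_1 + \cdots + v_n)$ by summing the explicit formula from \cref{ld def} (equivalently, by combining the structural lemmas above) over the vector $v = v_1 + \cdots + v_n$, which is the invariant vector spanning the trivial summand of the permutation representation. First I would note that since $v_1 + \cdots + v_n \in V^g$ for every $g$, the term $\ld_g(g, v_1 + \cdots + v_n) = \lambda_g(g, v_1 + \cdots + v_n) g$ vanishes by \cref{gg fixed} (or \cref{gg depends on trans only}(a) together with the telescoping in part (b)). So the only surviving contributions come from the ``off-diagonal'' coefficients, i.e.\ the transposition terms $(\alpha_{ij} - {}^g\alpha_{ij})\, g(i\ j)$.

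Next I would reorganize the double sum $\sum_{i=1}^n \ld(g, v_i)$ by collecting, for each unordered pair $\{i,j\}$, the coefficient of the group element $g(i\ j)$. Summing the formula in \eqref{equation: definition of ld} over $i = 1, \dots, n$, the pair $\{i,j\}$ contributes from $\ld(g, v_i)$ a term $(\alpha_{ij} - {}^g\alpha_{ij})\, g(i\ j)$ and from $\ld(g, v_j)$ a term $(\alpha_{ji} - {}^g\alpha_{ji})\, g(j\ i) = (\alpha_{ji} - {}^g\alpha_{ji})\, g(i\ j)$. Since $\alpha$ is antisymmetric in its two indices --- $\alpha_{ji} = \tfrac14 \lambda_1((j\ i), v_j - v_i) = -\alpha_{ij}$ by \cref{equation:alpha beta def} and \cref{lemma:opposite on reflections}, and likewise ${}^g\alpha_{ji} = -{}^g\alpha_{ij}$ --- these two contributions are negatives of each other and cancel. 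Hence every coefficient of $g(i\ j)$ in the total sum is zero, and combined with the vanishing of the $\beta$-term this gives $\ld(g, v_1 + \cdots + v_n) = 0$.

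Alternatively, and perhaps cleaner for the write-up, I would avoid invoking \cref{ld def} (which is proved using exactly this kind of bookkeeping) and argue directly from PBW Conditions (1) and (3): write $\ld(g, v_1 + \cdots + v_n) = \sum_h \ld_h(g, v_1 + \cdots + v_n)\, h$, and for each $h$ use \cref{cor:lambda}(4), which says $\ld_h(g, *)$ is supported only when $h^{-1}g$ is a reflection $(i\ j)$ or the identity. For $h = g$ the coefficient is $\ld_g(g, v_1 + \cdots + v_n) = 0$ by \cref{gg fixed}. For $h = g(i\ j)$, \cref{lemma:opposite on reflections} gives $\ld_{g(i\ j)}(g, v_i) = -\ld_{g(i\ j)}(g, v_j)$ and $\ld_{g(i\ j)}(g, v_m) = 0$ for $m \neq i,j$ (the latter again from the reflection-hyperplane vanishing in \cref{cor:lambda}(4), since $v_m$ lies on the reflecting hyperplane of $(i\ j)$ when $m \neq i,j$). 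Summing, $\ld_{g(i\ j)}(g, v_1 + \cdots + v_n) = \ld_{g(i\ j)}(g, v_i) + \ld_{g(i\ j)}(g, v_j) = 0$. Since every $h$ with possibly-nonzero coefficient is either $g$ or some $g(i\ j)$, all coefficients vanish and $\ld(g, v_1 + \cdots + v_n) = 0$.

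The only real obstacle is keeping the index bookkeeping honest, in particular verifying the antisymmetry ${}^g\alpha_{ij} = -{}^g\alpha_{ji}$ and confirming that no pair is double-counted or missed when the sum over $i$ is reorganized by unordered pairs; the second approach sidesteps most of this by leaning on the support statements in \cref{cor:lambda} and \cref{lemma:opposite on reflections}, so I would present that one. I expect the proof to be short --- essentially ``the invariant vector is fixed by $G$, so the diagonal term dies by \cref{gg fixed}, and the reflection terms pair up and cancel by oddness.''
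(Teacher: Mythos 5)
Your preferred (second) argument is exactly the paper's proof: it expands $\ld(g,v_1+\cdots+v_n)$ using the support statement of \cref{cor:lambda}(4), kills the coefficient of $g$ via \cref{gg fixed} since $v_1+\cdots+v_n\in V^g$, and cancels each $g(i\ j)$ coefficient via \cref{lemma:opposite on reflections}. So the proposal is correct and takes essentially the same route as the paper (and you were right to prefer it over the first sketch, which leans on \cref{ld def} and its bookkeeping rather than arguing directly from Conditions (1) and (3)).
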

\begin{proof}
PBW Condition~(3) implies Corollary~\ref{cor:lambda}\ (4)
(see the proof of~\cite[Cor.\ 3.3]{SheplerWitherspoon2015}), hence
\[
\begin{aligned}
\sum_{i=1}^n\ld(g, v_i)
&=
\sum_{i=1}^n\ld_g(g, v_i)g+\sum_{i=1}^n\sum_{j\neq i}\ld_{g(i\ j)}(g, v_i)g(i\ j)\\
&=
\ld_g\big(g, \sum_{i=1}^n v_i\big)g
+\sum_{1 \leq i < j \leq n }\big(\ld_{g(i\ j)}(g, v_i)+\ld_{g(i\ j)}(g, v_j)\big)g(i\ j).\end{aligned}
\]
This vanishes by
\cref{lemma:opposite on reflections} 
and \cref{gg fixed}.
 \end{proof}

\begin{lemma}\label{betas}
For the parameter function $\ld$ satisfying
PBW Conditions~(1) and~(3),
\begin{equation}
    \label{SumDefinition}
    \beta_1+\cdots +\beta_n=0
    \quad\text{ and }\quad
\lambda_g(g, v_i)=
\sum_{k=0}^{g(i)-i+n-1}\beta_{i+k}
\quad\text{ for all }
g \in G,\ 1\leq i\leq n\, .
\end{equation}
\end{lemma}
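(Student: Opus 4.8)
The plan is to deduce both assertions from \cref{gg depends on trans only} together with \cref{lemma:opposite on reflections}. The first claim, $\beta_1+\cdots+\beta_n=0$, follows by applying part (b) of \cref{gg depends on trans only} to the full $n$-cycle $g=(1\ 2\ \cdots\ n)$: the cyclic sum there becomes $\ld_{s_1}(s_1,v_1)+\ld_{s_2}(s_2,v_2)+\cdots+\ld_{s_{n-1}}(s_{n-1},v_{n-1})+\ld_{(n\ 1)}((n\ 1),v_n)$, and since $s_n=(n\ 1)$ and (by our indexing convention) $\beta_n=\tfrac12\ld_{s_n}(s_n,v_n-v_{n+1})$, \cref{lemma:opposite on reflections} identifies $\ld_{(n\ 1)}((n\ 1),v_n)$ with $\beta_n$; meanwhile \cref{gg depends on trans only}(a) combined with \cref{lemma:opposite on reflections} gives $\ld_{s_k}(s_k,v_k)=\beta_k$ for $1\le k<n$ as well. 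Hence the cyclic sum is exactly $\beta_1+\cdots+\beta_n$, which vanishes by (b).

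For the second formula, fix $g\in G$ and $i$, and set $j=g(i)$. By \cref{gg depends on trans only}(a), $\ld_g(g,v_i)=\ld_{(i\ j)}\big((i\ j),v_i\big)$. If $j=i$ (i.e.\ $v_i\in V^g$) this is $0$ by \cref{gg fixed}, matching the empty sum $\sum_{k=0}^{-1}$ when $g(i)-i+n-1$ is interpreted mod $n$ as $n-1$ — so I will need to be slightly careful and note that when $g(i)=i$ the exponent range $g(i)-i+n-1$ should be read as giving an empty sum, or equivalently handle the fixed case separately and observe $\beta_i+\beta_{i+1}+\cdots+\beta_{i-1}=0$ by the first part, so the "full" sum also vanishes. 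When $j\neq i$: writing $j=i+m+1$ with indices mod $n$ where $m=g(i)-i+n-1$ counted appropriately (so $m$ is between $0$ and $n-2$), apply \cref{gg depends on trans only}(c) to $(i\ j)$ to get $\ld_{(i\ j)}\big((i\ j),v_i\big)=\ld_{s_i}(s_i,v_i)+\ld_{s_{i+1}}(s_{i+1},v_{i+1})+\cdots+\ld_{s_{j-1}}(s_{j-1},v_{j-1})$. By \cref{lemma:opposite on reflections} together with \cref{gg depends on trans only}(a), each summand $\ld_{s_\ell}(s_\ell,v_\ell)$ equals $\tfrac12\ld_{s_\ell}(s_\ell,v_\ell-v_{\ell+1})=\beta_\ell$, so the sum is $\beta_i+\beta_{i+1}+\cdots+\beta_{j-1}=\sum_{k=0}^{g(i)-i+n-1}\beta_{i+k}$, as claimed. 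Since \cref{gg depends on trans only}(c) is stated for $i<j$, the case $j<i$ must be reduced to it, which is where the cyclic indexing convention on the $\beta$'s does the work: one rewrites the transposition sum cyclically and uses $\beta_1+\cdots+\beta_n=0$ to convert the "short way around" sum into the "long way around" sum, i.e.\ $\sum_{k=0}^{j-1-i}\beta_{i+k}$ when $i<j$ versus $\sum_{k=0}^{(j+n)-1-i}\beta_{i+k}$ when $j<i$; since the difference is a full period of $\beta$'s summing to zero, both equal the stated expression.

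The main obstacle is purely bookkeeping: reconciling the closed-form exponent range $g(i)-i+n-1$ with the three situations $g(i)>i$, $g(i)<i$, and $g(i)=i$, and making sure the cyclic indexing convention on $\beta$ (declared just before the lemma) is invoked consistently — in particular that the identity $\beta_1+\cdots+\beta_n=0$ is precisely what licenses replacing a sum over a "partial arc" of indices by the complementary arc. No new PBW manipulation is needed beyond Conditions (1) and (3), both of which have already been distilled into \cref{gg fixed}, \cref{lemma:opposite on reflections}, and \cref{gg depends on trans only}.
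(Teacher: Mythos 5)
Your proposal is correct and follows essentially the same route as the paper: both derive $\beta_1+\cdots+\beta_n=0$ from \cref{betas sum to zero}(b) applied to the $n$-cycle $(1\ 2\ \cdots\ n)$, and both obtain $\lambda_g(g,v_i)$ from parts (a) and (c) of that lemma (which give $\lambda_{s_k}(s_k,v_k)=\beta_k$ and the telescoping over adjacent transpositions), using $\beta_1+\cdots+\beta_n=0$ to reconcile the arc sums with the closed-form index range, exactly as in the paper's cases $i<g(i)$ and $g(i)<i$. Your explicit treatment of the fixed case $g(i)=i$ (where the correct reading is the full-period sum, which vanishes) is a harmless addition the paper leaves implicit.
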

\begin{proof}
\cref{betas sum to zero}
with the cycle $(1\ 2\ \cdots\ n)$ 
implies that $\sum_{1\leq j\leq n} \beta_j =0$.
For $i<g(i)$,
\[\begin{aligned}
\sum_{k=0}^{g(i)-i+n-1}
\beta_{i+k}
=
\sum_{k=i}^{g(i)-1}\beta_{k}
+ \sum_{k=g(i)}^{g(i)+n-1}
\beta_k
=
\sum_{k=i}^{g(i)-1}\ld_{s_k}(s_k, v_{k}),
\end{aligned}
\]
which is just 
$
\ld_{(i\ g(i))}\big((i\ g(i)), v_{i}\big)
=\ld_g(g, v_i)$
by \cref{betas sum to zero}.
Similarly, for $g(i)<i$, 
\[\begin{aligned}
\sum_{k=0}^{g(i)-i+n-1}
\beta_{i+k}
= \beta_i+\beta_{i+1}+\cdots+ \beta_n+\beta_1+\cdots +\beta_{g(i)-1}
=
-( \beta_{g(i)} + \cdots + \beta_{i-1}),
\end{aligned}
\]
which again is just
$
-\ld_{(g(i)\ i)}\big((g(i)\ i), v_{g(i)}\big)
=\ld_g(g, v_i)$
by \cref{betas sum to zero}.
\end{proof}

\vspace{1ex}

\begin{cor}\label{ld zero for fixed v}
If the parameter $\ld$ satisfies (\ref{SumDefinition}),
then for any $k$-cycle 
$(l_1\ \cdots\ l_k)$ in $G$,
$$
\lambda_{(l_1\ l_2)}\big((l_1\ l_2), v_{l_1}\big) 
+  \lambda_{(l_{2}\ l_3)}\big((l_2\ l_3), v_2\big) + \cdots 
+ \lambda_{(l_k\ l_1)}\big((l_k\ l_1), v_{l_k}\big)
=
0\, .
$$\end{cor}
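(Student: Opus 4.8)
The plan is to deduce the identity directly from the two halves of \eqref{SumDefinition}, without reusing the PBW conditions. The second equation of \eqref{SumDefinition} makes $\ld_g(g,v_i)$ depend only on $i$ and its image $g(i)$, so I would first repackage it as a difference of values of a single function. Define $P\colon\{1,\dots,n\}\to\FF$ by $P(t)=\beta_1+\cdots+\beta_{t-1}$, so that $P(1)=0$ while the full period $\beta_1+\cdots+\beta_n$ vanishes by the first half of \eqref{SumDefinition}. Splitting into the cases $i\le g(i)$ and $i>g(i)$ exactly as in the proof of \cref{betas}---in the first the cyclic run of $g(i)-i+n$ consecutive $\beta$'s loses a full period, in the second it gains one---I would check that
\[
\ld_g(g,v_i)=\sum_{k=0}^{g(i)-i+n-1}\beta_{i+k}=P(g(i))-P(i)
\qquad\text{for all }g\in G,\ 1\le i\le n.
\]

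With this formula in hand the corollary is a telescoping sum. Reading the indices on $l_1,\dots,l_k$ modulo $k$ (so $l_{k+1}=l_1$), the transposition $(l_m\ l_{m+1})$ sends $l_m$ to $l_{m+1}$, hence
\[
\ld_{(l_m\ l_{m+1})}\big((l_m\ l_{m+1}),v_{l_m}\big)=P(l_{m+1})-P(l_m),
\]
and summing over $m=1,\dots,k$ gives $\sum_{m=1}^{k}\big(P(l_{m+1})-P(l_m)\big)=P(l_1)-P(l_1)=0$. An equivalent packaging, which I might use instead if it reads more cleanly: \eqref{SumDefinition} shows $\ld_{(l_m\ l_{m+1})}((l_m\ l_{m+1}),v_{l_m})$ equals $\ld_c(c,v_{l_m})$ for the $k$-cycle $c=(l_1\ \cdots\ l_k)$, so the whole sum is $\ld_c(c,v_{l_1}+\cdots+v_{l_k})$, which the explicit formula again evaluates to zero.

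I do not expect a genuine obstacle: the statement is essentially a bookkeeping consequence of \cref{betas}. The only mildly fiddly point is the index arithmetic in the first step---confirming that the length-$(g(i)-i+n)$ run of consecutive $\beta$'s, read with indices modulo $n$, collapses to $P(g(i))-P(i)$ in both the case $g(i)>i$ and the case $g(i)<i$. Since this case analysis already appears verbatim in the proof of \cref{betas}, it can be cited rather than repeated, so the write-up should be quite short.
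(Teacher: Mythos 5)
Your proof is correct and follows essentially the same route as the paper: both reduce each summand via \eqref{SumDefinition} to a cyclic run of $\beta$'s, observe that summing over the cycle collapses everything to a multiple of $\beta_1+\cdots+\beta_n$ (which you make explicit through the telescoping potential $P$), and then invoke the first equality of \eqref{SumDefinition}. The only cosmetic difference is that you absorb the full-period term into the formula $\ld_g(g,v_i)=P(g(i))-P(i)$ up front, whereas the paper keeps it and notes at the end that the total is a multiple of the (vanishing) period.
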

\begin{proof}
Since (\ref{SumDefinition}) implies that 
$$\ld_{(i\ j)}((i\ j), v_i)=\beta_i+\cdots +\beta_n+\beta_1+\cdots +\beta_{j-1} \quad\text{for } 1\leq i\neq j\leq n
$$
(note that the sum stops at $\beta_n$ when $j=1$
as $\beta_n=\beta_0$), the sum 
$\sum_{i=1}^k \ld_{(l_i\ l_{i+1})}((l_i\ l_{i+1}), v_{l_i})$ 
is a multiple of $\beta_1+\cdots +\beta_n$, which is zero by the first equality in (\ref{SumDefinition}).
 \end{proof}

\vspace{2ex}

 \subsection{Proof of \cref{ld def}}
We now have the tools to show that every PBW algebra $\cH_{\lambda, \kappa}$ 
 has
 parameter $\ld$ determined by the values $\alpha_{ij}$ and 
 $\beta_k$. 
 
\begin{proof}[Proof of \cref{ld def}]
We show PBW Conditions (1) and (3) are equivalent to these three facts
for all $g$ in $G$ and all $i$:
\begin{itemize}
    \item[(a)] $\lambda_{g(i\ j)}(g, v_i)
    =
    \alpha_{ij}-\ ^{g}\alpha_{ij}$
    for all $j\neq i$,
    \item[(b)] $\beta_1+\cdots+ \beta_n=0$ and
    $\ld_g(g, v_i)
    =\sum_{k=0 }^{g(i)-i+n-1 }\beta_k$, and
    \item[(c)]
    $\ld(g, v_i)$ is  supported on $g$ 
    and all $g(i\ j)$ for $j\neq i$.
    \rule{0ex}{2.25ex} 
\end{itemize}
Assume PBW Conditions~(1) and~(3) 
both hold.
\cref{betas} implies (b).
PBW Condition~(3) implies part (4) of
Corollary~\ref{cor:lambda} 
(see the proof of~\cite[Cor.\ 3.3]{SheplerWitherspoon2015}), 
implying (c).

We induct on the
(absolute) reflection length $\length(g)$
of $g$ in $G=\s_n$
to verify (a).
First, if $g=1$, then both sides of $(a)$ vanish
by \cref{cor:lambda}(1).
Now suppose
$g$ is a transposition
fixing $i$ and $j$.
Then $\alpha_{ij} = \ ^{g}\alpha_{ij}$ and the right-hand side of (a) is zero;
on the other hand,
 PBW Condition~(1) implies that
  \[
    \lambda\big((i\ j), v_i\big)g
    +(i\ j)\lambda(g, v_i)
    =
    \lambda(g, v_j)(i\ j)+g\lambda\big((i\ j), v_i\big)
    \]
    and we 
  apply Lemma ~\ref{lemma:opposite on reflections}
   to the coefficient of $g$,
    \[
    \ld_1\big((i\ j), v_i\big)
    +
    \ld_{g(i\ j)}(g, v_i)
    =
    \lambda_{g(i\ j)}(g, v_j)+
    \ld_1\big((i\ j),v_i\big),
    \]
 to
   see the left-hand side of (a) is zero.
Now suppose instead $g=(i\ k)$ for some $k\neq i$. Then  $g=(i\ j\ k)(i\ j)=(j\ k)(i\ j\ k)$,
and we use PBW Condition~(1) to write $\lambda((i\ k), v_i-v_j)$ in two ways and match the coefficients of $(i\ k)(i\ j)$:
\[
\begin{aligned} 
\lambda_{(i\ k)(i\ j)}
\big((i\ k), v_i-v_j\big)
&=
\lambda_{(i\ j\ k)(i\ j)}
\big( (i\ j\ k), v_j-v_i)+\ld_1((i\ j), v_i-v_j\big)
\quad\text{ and }
\\
\lambda_{(i\ k)(i\ j)}
\big((i\ k), v_i-v_j)
&=
\ld_1\big((j\ k), v_j-v_k\big)
+
\lambda_{(i\ j\ k)(i\ j)}
\big((i\ j\ k), v_i-v_j\big);
\end{aligned}
\]
we conclude $2\ld_{(i\ k)(i\ j)}((i\ k), v_i-v_j)
=4(\alpha_{ij}-\,
^{g}\alpha_{ij})$ and
\cref{lemma:opposite on reflections}
implies (a).

To show the induction step, 
fix some $g$ in $G$,
and assume the result holds for all group elements
with smaller (absolute) reflection length.
Write
$g=g_1 g_2$
for some $g_1, g_2$ in $\mathfrak{S}_n$ with
$0<\length(g_1), \length(g_2) < \length(g)$.
PBW Condition~(1) implies that
\[
\lambda(g, v_i-v_j)
= 
\lambda(g_1g_2, v_i-v_j)
 =
\lambda\big(g_1,\, ^{g_2}(v_i-v_j)\big)
g_2+g_1\lambda(g_2, v_i-v_j),
\] 
and we equate
the coefficients of 
$g(i\ j)
=
g_1 
\big(g_2(i)\ g_2(j)\big)g_2$ 
to obtain (a):
\[\begin{aligned}
2\lambda_{g(i\ j)}(g, v_i)
&=
\lambda_{g(i\ j)}(g, v_i-v_j)
= 
\lambda_{g_1(g_2(i)\ g_2(j))}(g_1, v_{g_2(i)}-v_{g_2(j)})
+
\lambda_{g_2(i\ j)}(g_2, v_i-v_j)
\\
&= 2(^{g_2}\alpha_{ij}-\ ^{g_1g_2}\alpha_{ij})
+2(\alpha_{ij}-\ ^{g_2}\alpha_{ij})
=
2(\alpha_{ij}-\ ^{g}\alpha_{ij})\, .
\end{aligned}
\]

  To prove the converse, we assume (a), (b), and (c) hold and first show PBW Condition~(1). 
  Note that (b) implies that for all $g\in G$ and for all $v_i \in V$,  $\ld_g(g,v_i)$ coincides
  with $\ld_{(i\ g(i))}((i\ g(i)), v_i)$. 
  The right-hand side of PBW Condition~(1)
  at $v=v_i$ is
  \begin{equation}\label{4terms}
  \begin{aligned}
  \ld_{(h(i)\  gh(i))}
  \big((h(i)\ gh(i) ), v_{h(i)}\big)\, gh
 \ &+
  \sum_{j:\, h(j)\neq h(i)}(\alpha_{h(i)h(j)}-\ ^g\alpha_{h(i)h(j)})
  \, gh(i\ j)h^{-1}h
  \\
   \ +\,  \ld_{(i\ h(i))}
  \big((i\ h(i)), v_i\big)
  \, gh 
  \ &+\ \ \ 
  \sum_{j: j\neq i}(\alpha_{ij}-\ ^h\alpha_{ij})\, gh(i\ j).
  \end{aligned}
  \end{equation}
We apply 
  \cref{ld zero for fixed v}
to the 3-cycle $(i\ h(i)\ gh(i))$ and the $2$-cycle
$(i\ gh(i))$, as well as (b), to simplify the $gh$ terms
and obtain
  \[\begin{aligned}
  - 
  \ld_{(i\ gh(i))}\big((i\ gh(i)), v_{gh(i)}\big) \, gh
     &=
   \ld_{(gh(i)\ i)}
   \big((gh(i)\ i), v_i\big)\, 
   gh\,
   = \ld_{gh}
   \big(gh, v_i\big)\, 
   gh\,
   .
   \end{aligned}
\]
The $gh(i\ j)$ terms combine as
$\sum_{j: i\neq j}\ld_{gh(i\ j)}(gh, v_i)\, gh(i\ j)$.
 Thus~\cref{4terms} is just 
 $\ld(gh, v_i)$.
 
  The only nontrivial case in showing PBW Condition~(3) is when $h=g(i\ j)$ with vectors $\{u,v\}=\{v_i,v_j\}$. To confirm that
  \[\lambda_{g(i\ j)}
  (g, v_i)
  (v_{g(i)}-v_{g(j)})=\lambda_{g(i\ j)}(g, v_j)(v_{g(j)}-v_{g(i)}),\]
   apply (a) to each side
  and note that
$      (\alpha_{ij}-\ ^g\alpha_{ij})(v_{g(i)}-v_{g(j)})
      =(\alpha_{ji}-\ ^g\alpha_{ji})(v_{g(j)}-v_{g(i)})$.
\end{proof}

\section{Deforming commutativity}
Again we consider $G=\mathfrak{S}_n$ acting by permutation
of basis vectors $v_1,\ldots, v_n$ of $V\cong\FF^n$. 
Working toward the classification of the Drinfeld Hecke algebras
$\cH_{\lambda,\kappa}$ in Section 7,
we described the form of $\ld$ in a PBW deformation in the last section and we describe the form of $\kappa$
here.
We again take $n> 2$ in this section
(see Section $8$ for $n=1,2,3$)
and 
fix linear parameter functions
\[\kappa:V\otimes V\rightarrow \field G\ \ \text{ and }\ \ \ld:\field G \otimes V \rightarrow \field G,
\quad
\text{with $\kappa$ alternating.}
\]
We use the PBW Conditions (1)--(5) of \cref{PBWconditions}.
The next lemma measures the failure of $\kappa$ to be invariant and follows
from \cref{ld def}. 

\vspace{1ex}

\begin{lemma}
\label{kappa invariance} Assume
PBW Conditions~(1) and~(3) hold
for the parameter function $\lambda$.
Then PBW Condition~(2)  is equivalent to 
\[
\kappa(\, ^gv_i,\, ^gv_j)g-g\kappa(v_i, v_j)
=
\sum_{k \neq i,j}
(^g
\alpha_{ijk}-\alpha_{ijk})
\, g\, \big((i\ j\ k)-(i\ k\ j)
\big)
\quad
\text{for all $g\in G$, $i\neq j$}.
\]
\end{lemma}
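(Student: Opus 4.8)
The plan is to compute the right-hand side of PBW Condition~(2) directly from the explicit formula for $\ld$ in \cref{ld def}. Since $\kappa$ and $\ld$ are $\FF$-linear in their $V$-slots and $\kappa$ is alternating, PBW Condition~(2) holds for all $u,v\in V$ exactly when it holds for $u=v_i$, $v=v_j$ with $i\neq j$, so it is enough to show
\[
\ld\bigl(\ld(g,v_j),v_i\bigr)-\ld\bigl(\ld(g,v_i),v_j\bigr)
=\sum_{k\neq i,j}\bigl({}^g\alpha_{ijk}-\alpha_{ijk}\bigr)\,g\bigl((i\ j\ k)-(i\ k\ j)\bigr).
\]
First I would substitute the formula of \cref{ld def} for $\ld(g,v_j)$ and use $\FF$-linearity of $\ld$ in its first slot to write
\[
\ld\bigl(\ld(g,v_j),v_i\bigr)
=\ld_g(g,v_j)\,\ld(g,v_i)+\sum_{k\neq j}\bigl(\alpha_{jk}-{}^g\alpha_{jk}\bigr)\,\ld\bigl(g(j\ k),v_i\bigr),
\]
separating off the summand $k=i$, which contributes $\ld\bigl(g(i\ j),v_i\bigr)$, and then apply \cref{ld def} once more to each $\ld\bigl(g(j\ k),v_i\bigr)$; the analogous expansion holds with $i$ and $j$ interchanged.

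I would then collect the resulting terms according to the element of $\mathfrak{S}_n$ each scalar multiplies. Because $(j\ k)$ is applied before $(i\ m)$, every such element is $g$ times a product of at most two transpositions, so the possibilities are $g$ itself, a single transposition $g(j\ k)$, $g(i\ k)$ or $g(i\ j)$, a product of two disjoint transpositions, or a $3$-cycle $g(i\ j\ k)$ or $g(i\ k\ j)$ with $k\neq i,j$. For the bookkeeping I would record that $\alpha_{ji}=-\alpha_{ij}$ and ${}^g\alpha_{ji}=-{}^g\alpha_{ij}$ (immediate from \eqref{equation:alpha beta def} and \cref{lemma:opposite on reflections}), that each scalar ${}^h\alpha_{pq}$ depends only on the unordered pair $\{h(p),h(q)\}$, so that, e.g., ${}^{g(j\ k)}\alpha_{ik}={}^g\alpha_{ij}$, and that by \cref{betas} and \cref{ld zero for fixed v} the scalar $\ld_g(g,v_i)$ is a cyclic sum of $\beta$'s depending only on $i$ and $g(i)$ — in particular $\ld_g(g,\cdot)$ behaves like a potential difference $\phi(g(i))-\phi(i)$ for a partial-sum function $\phi$ on $\ZZ/n$.

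With these in hand, the coefficient of $g$ vanishes because in each half of the difference it equals $\ld_g(g,v_j)\,\ld_g(g,v_i)+(\alpha_{ij}-{}^g\alpha_{ij})(\alpha_{ji}-{}^g\alpha_{ji})$ and the field is commutative; the coefficients of $g(j\ k)$, $g(i\ k)$ and of the disjoint-transposition elements vanish for the same reason, and the coefficient of $g(i\ j)$ equals $(\alpha_{ij}-{}^g\alpha_{ij})$ times the combination $\ld_g(g,v_j)-\ld_{g(i\ j)}(g(i\ j),v_i)+\ld_g(g,v_i)-\ld_{g(i\ j)}(g(i\ j),v_j)$, which telescopes to $0$ because $g(i\ j)$ carries $\{i,j\}$ to $\{g(j),g(i)\}$. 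Finally, for each $k\neq i,j$ one tracks the ways a term $g(j\ \ell)(i\ m)$ or $g(i\ m)(j\ \ell)$ from the two expansions becomes $g(i\ j\ k)$; rewriting every ${}^h\alpha$ that occurs in terms of ${}^g\alpha$'s and using $\alpha_{ijk}=\alpha_{ij}\alpha_{jk}+\alpha_{jk}\alpha_{ki}+\alpha_{ki}\alpha_{ij}$, the contributions collapse to ${}^g\alpha_{ijk}-\alpha_{ijk}$, and symmetrically the coefficient of $g(i\ k\ j)$ is $-({}^g\alpha_{ijk}-\alpha_{ijk})$, which is precisely the claimed right-hand side. Since the whole argument is a chain of equalities valid whenever $\ld$ satisfies PBW Conditions~(1) and~(3), so that \cref{ld def} applies, the converse implication follows automatically.

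The step I expect to be the main obstacle is this last, $3$-cycle, bookkeeping: enumerating, for each $k\neq i,j$, every factorization of $g(i\ j\ k)$ and of $g(i\ k\ j)$ as $g(j\ \ell)(i\ m)$ or $g(i\ m)(j\ \ell)$ that arises from the two expansions, reading off the scalar after converting each ${}^h\alpha$ to a ${}^g\alpha$, and verifying that the contributions collapse to the cyclic expression ${}^g\alpha_{ijk}-\alpha_{ijk}$. Keeping the signs consistent through the repeated use of $\alpha_{ji}=-\alpha_{ij}$ is where an error is most likely to creep in.
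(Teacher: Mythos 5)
Your proposal follows essentially the same route as the paper's proof: substitute the formula of \cref{ld def} into the right-hand side of PBW Condition~(2), check that the coefficients of everything other than the $3$-cycles $g(i\ j\ k)$ and $g(i\ k\ j)$ cancel (the paper simply cites \cref{betas sum to zero} for this), and simplify the $3$-cycle coefficients to $\pm({}^g\alpha_{ijk}-\alpha_{ijk})$ using $\alpha_{ji}=-\alpha_{ij}$. Your bookkeeping is a more explicit version of the paper's computation and is correct, apart from a harmless slip in the displayed coefficient of $g$ (the $k=i$ term produces ${}^{g(i\ j)}\alpha_{ij}={}^g\alpha_{ji}$, not ${}^g\alpha_{ij}$), which does not affect the symmetry argument showing that coefficient cancels in the difference.
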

\begin{proof}
We rewrite the right-hand side of PBW Condition~(2) with $v_i, v_j$ in place
of $u, v$
using \cref{ld def}. \cref{betas sum to zero} 
implies all terms vanish except
those with 
group elements $g(i\ j\ k)$ and $g(i\ k\ j)$.
The coefficient of $g(i\ j\ k)$
is
\[
        \sum_{k\neq i,j}\Big((\alpha_{jk}-\ ^g\alpha_{jk})(\alpha_{ik}-\ ^g\alpha_{ij})-(\alpha_{ij}-\ ^g\alpha_{ij})(\alpha_{jk}-\ ^g\alpha_{ik})-(\alpha_{ik}-\ ^g\alpha_{ik})(\alpha_{ji}-\ ^g\alpha_{jk})\Big)
\]
 which simplifies to
 $
 \, ^g\alpha_{ijk}-\alpha_{ijk}
 $
 since $\alpha_{ij}=-\alpha_{ji}$, etc.
 Likewise,
 the coefficient 
   \[
  \sum_{k\neq i,j}\Big( (\alpha_{ji}-\ ^g\alpha_{ji})(\alpha_{ik}-\ ^g\alpha_{jk})-(\alpha_{jk}-\ ^g\alpha_{jk})(\alpha_{ij}-\ ^g\alpha_{ik})-(\alpha_{ik}-\ ^g\alpha_{ik})(\alpha_{jk}-\ ^g\alpha_{ji})\Big)
  \]
of  $g(i\ k\ j)$
simplifies to
$-(^g\alpha_{ijk}-\alpha_{ijk}) $
and we obtain the
right-hand side of the
equation in the statement.
\end{proof}

\vspace{1ex}

\begin{lemma}\label{KappaEqualities}
If $\cH_{\ld,\kappa}$ satisfies
the PBW property, then
for all distinct $i,j,k$, 
        \[
        \kappa_{(i\ j\ k)}(v_i, v_j)
        =
        \kappa_{(i\ j\ k)}(v_j, v_k)
        =
        \kappa_{(i\ j\ k)}(v_k, v_i)
        =
        \kappa_{(i\ k\ j)}(v_i, v_k)
        =
        -\kappa_{(i\ k\ j)}(v_k, v_i)\, .
        \]
\end{lemma}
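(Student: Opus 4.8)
The plan is to read off these equalities directly from the PBW Conditions of \cref{PBWconditions}, specifically from Conditions (3) and (4), applied to the relevant three-cycles. I would first observe that for a three-cycle $g=(i\ j\ k)$, the fixed point space $V^g$ has codimension $2$ (by \cref{codimlemma}), and $V-V^g$ is spanned by $v_i-v_j$, $v_j-v_k$, $v_k-v_i$ together with the vectors $v_\ell$ for $\ell\neq i,j,k$; more precisely $^g v_i - v_i = v_j - v_i$, $^g v_j - v_j = v_k - v_j$, $^g v_k - v_k = v_i - v_k$, and $^g v_\ell - v_\ell = 0$ for $\ell\notin\{i,j,k\}$. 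Then I would extract the coefficient of $g=(i\ j\ k)$ in PBW Condition~(4) with the triple $(u,v,w)=(v_i,v_j,v_k)$: this gives
\[
\kappa_{(i\ j\ k)}(v_i,v_j)(v_k-v_i)
+\kappa_{(i\ j\ k)}(v_j,v_k)(v_i-v_j)
+\kappa_{(i\ j\ k)}(v_k,v_i)(v_j-v_k)=0
\]
in $V$. Since $v_i, v_j, v_k$ are part of a basis of $V$, collecting coefficients of $v_i$, $v_j$, $v_k$ forces $\kappa_{(i\ j\ k)}(v_i,v_j)=\kappa_{(i\ j\ k)}(v_j,v_k)=\kappa_{(i\ j\ k)}(v_k,v_i)$, which is the first chain of three equalities.

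Next I would handle the inverse cycle: applying the same argument with $g=(i\ k\ j)=(i\ j\ k)^{-1}$ and the triple $(v_i,v_k,v_j)$ (noting $^{(i\ k\ j)}v_i - v_i = v_k-v_i$, etc.) yields $\kappa_{(i\ k\ j)}(v_i,v_k)=\kappa_{(i\ k\ j)}(v_k,v_j)=\kappa_{(i\ k\ j)}(v_j,v_i)$. Together with the antisymmetry of $\kappa$ ($\kappa_{(i\ k\ j)}(v_j,v_i)=-\kappa_{(i\ k\ j)}(v_i,v_j)$, etc.) this gives the last equality $\kappa_{(i\ k\ j)}(v_i,v_k)=-\kappa_{(i\ k\ j)}(v_k,v_i)$ as well as the companion identities. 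The remaining link is to show $\kappa_{(i\ j\ k)}(v_i,v_j)=\kappa_{(i\ k\ j)}(v_i,v_k)$, tying the two three-cycles together. For this I would turn to PBW Condition~(2), or rather its reformulation \cref{kappa invariance}: applying it with $g=(i\ j)$ (or some convenient transposition) relates $\kappa_{(i\ j\ k)}$-values and $\kappa_{(i\ k\ j)}$-values via conjugation $g\,\kappa(v_i,v_j)$ versus $\kappa(^gv_i,{}^gv_j)\,g$, and the right-hand side of \cref{kappa invariance} is supported precisely on products $g\big((i\ j\ k)-(i\ k\ j)\big)$, so comparing coefficients of the appropriate conjugate three-cycles produces the missing identification after accounting for the $^g\alpha_{ijk}-\alpha_{ijk}$ terms. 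Alternatively, one can get this link purely from Condition~(4) by using a transposition $g$ acting on the three-cycle's support; I would pick whichever bookkeeping is cleaner.

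The main obstacle I expect is not any single computation but keeping the index bookkeeping under control: correctly tracking which conjugate of which three-cycle appears when one expands PBW Condition~(4) for the inverse cycle, and making sure the sign conventions from $\kappa$ being alternating and from the cyclic relabelling $(i\ j\ k)\leftrightarrow(i\ k\ j)$ are applied consistently. I would also need to be careful that all the ``$v_i$ for $i\notin\{i,j,k\}$'' terms really do drop out of Condition~(4) because $g$ fixes them, so they contribute $^g v_\ell - v_\ell = 0$ and cannot interfere with the coefficient extraction on the $\{v_i,v_j,v_k\}$-subspace. Once the coefficient-of-$g$ equations are written down cleanly in the basis $\{v_1,\dots,v_n\}$, the conclusion is immediate linear algebra, so I would spend the bulk of the write-up carefully stating those two or three coefficient equations and then simply reading off the five-term chain.
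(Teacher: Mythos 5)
Your proposal is correct and follows essentially the same route as the paper: PBW Condition~(4) with $g=(i\ j\ k)$ (and with $(i\ k\ j)$) gives the cyclic equalities within each $3$-cycle, \cref{kappa invariance} with the transposition $g=(i\ j)$ (where $^g\alpha_{ijk}=\alpha_{ijk}$, so the right-hand side vanishes) supplies the link between the $\kappa_{(i\ j\ k)}$ and $\kappa_{(i\ k\ j)}$ values, and the alternating property of $\kappa$ gives the final sign. One caution: your suggested alternative of getting that cross-cycle link ``purely from Condition~(4)'' cannot work, since Condition~(4) constrains each $\kappa_g$ separately and never relates $\kappa_{(i\ j\ k)}$ to $\kappa_{(i\ k\ j)}$; the invariance condition (Condition~(2), via \cref{kappa invariance}) is genuinely needed there, exactly as in the paper.
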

\begin{proof}
We use Theorem~\ref{PBWconditions}.  PBW Condition~(4)
with $g=(i\ j\ k)$ implies that
  \[
  \kappa_{(i\ j\ k)}(v_i, v_j)(v_i-v_k)+\kappa_{(i\ j\ k)}(v_j, v_k)(v_j-v_i)+\kappa_{(i\ j\ k)}(v_k, v_i)(v_k-v_j)=0,
  \, 
  \]
giving the first two equalities.
Set
 $g=(i\ j)$ 
 in
 Lemma~\ref{kappa invariance} (whence $^g\alpha_{ijk}=\alpha_{ijk}$ and the right-hand side vanishes), and consider the coefficient of $(j\ k)$ to deduce the third equality. For the final equality, recall
 that $\kappa$ is alternating. 
 \end{proof}

\vspace{1ex}

\begin{prop}\label{kappa 3-cycles}  If $\cH_{\ld,\kappa}$ satisfies
the PBW property,
then $\kappa$ is supported on 3-cycles, and  
\[
\kappa(v_i, v_j)
=
\sum_{k\neq i,j}\kappa_{(i\ j\ k)}(v_i, v_j)\big(
(i\ j\ k)-(i\ k\ j)\big)\, 
\quad\text{ for } i\neq j\, .
\] 
\end{prop}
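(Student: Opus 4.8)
The plan is to show that $\kappa_g \equiv 0$ for every $g \in G$ that is not a $3$-cycle, and then to pin down the remaining values using the equalities already established in \cref{KappaEqualities}. First I would invoke \cref{PBWkappaconditions} to restrict the support of $\kappa$ to the identity, reflections (transpositions in $\mathfrak{S}_n$), and bireflections (products of two disjoint transpositions, and $3$-cycles, since these are exactly the elements with $\codim V^g = 2$). So it suffices to rule out $g = 1$, $g$ a transposition, and $g$ a product of two disjoint transpositions. For each of these I would feed the appropriate specialization of \cref{kappa invariance} and PBW Conditions~(4) and~(5) against the known structure of $\lambda$ from \cref{ld def}.

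The identity and transposition cases I expect to be quick: when $g=1$, \cref{PBWkappaconditions}(a) gives no constraint directly, but one shows $\kappa_1 \equiv 0$ by applying \cref{kappa invariance} with $g = s_k$ a transposition and comparing coefficients of the identity on both sides (the right-hand side carries only $3$-cycle terms $g(i\ j\ k) - g(i\ k\ j)$, never the identity), forcing $\kappa_1(\,{}^{s_k}v_i, {}^{s_k}v_j) = \kappa_1(v_i,v_j)$ for all adjacent transpositions, hence $\kappa_1$ is $\mathfrak{S}_n$-invariant and alternating; combined with \cref{PBWkappaconditions}(b) applied to a transposition (which says $\kappa_g$ vanishes on the fixed hyperplane — here forcing $\kappa_1(v_i, v_j)=0$ for most pairs) this yields $\kappa_1 \equiv 0$. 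For $g$ a transposition $(a\ b)$, the same analysis via \cref{PBWkappaconditions}(b): $\kappa_{(a\ b)}$ must vanish on $V^{(a\ b)}$, which has codimension~$1$; then compare coefficients of $(a\ b)$ in \cref{kappa invariance} with various group elements to force $\kappa_{(a\ b)} \equiv 0$.

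The main obstacle is the bireflection case: ruling out $g = (a\ b)(c\ d)$ with $\{a,b\} \cap \{c,d\} = \emptyset$, where $\codim V^g = 2$ and \cref{PBWkappaconditions}(c) only tells us $\ker \kappa_g = V^g$, so $\kappa_g$ genuinely could be nonzero a priori. Here I would use PBW Condition~(5) (the mixed Jacobi identity $\sum_{\text{cyc}} \ld(\kappa(u,v),w) = 0$), feeding in $u,v,w$ from the two-dimensional complement of $V^g$; since $\ld(g, *)$ is supported on $h$ with $h^{-1}g$ a reflection or identity (\cref{cor:lambda}(4)), expanding $\ld(\kappa_g(u,v)\,g, w)$ produces terms whose group-algebra components cannot cancel against the $3$-cycle contributions unless $\kappa_g(u,v) = 0$. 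Alternatively — and this may be cleaner — one uses \cref{kappa invariance} with a carefully chosen $g'$ conjugating $(a\ b)(c\ d)$ and matches coefficients: since the right-hand side of \cref{kappa invariance} is supported only on products $g'(i\ j\ k)^{\pm 1}$, no product of two disjoint transpositions ever appears, forcing $\kappa_{(a\ b)(c\ d)}$ to be $\mathfrak{S}_n$-conjugation-equivariant in a way incompatible with $\ker \kappa_g = V^g$ unless it vanishes. Once the support is reduced to $3$-cycles, \cref{KappaEqualities} gives $\kappa_{(i\ j\ k)}(v_i,v_j) = \kappa_{(i\ k\ j)}(v_i,v_k) = -\kappa_{(i\ k\ j)}(v_k,v_i)$, so the coefficient of $(i\ k\ j)$ in $\kappa(v_i, v_j)$ equals $-\kappa_{(i\ j\ k)}(v_i,v_j)$; writing out $\kappa(v_i,v_j) = \sum_{g} \kappa_g(v_i,v_j)\,g$ and keeping only $3$-cycles fixing neither $i$ nor $j$ (those fixing $i$ or $j$ act trivially enough that \cref{KappaEqualities}-type arguments and \cref{PBWkappaconditions}(c) kill them), we collect the $(i\ j\ k)$ and $(i\ k\ j)$ terms into the stated sum $\sum_{k \neq i,j} \kappa_{(i\ j\ k)}(v_i,v_j)\big((i\ j\ k) - (i\ k\ j)\big)$.
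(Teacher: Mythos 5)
Your overall strategy is the same as the paper's (restrict the support via \cref{PBWkappaconditions}, kill the identity, transposition, and double-transposition coefficients by matching coefficients in \cref{kappa invariance} together with PBW Condition~(4), then assemble the formula from \cref{KappaEqualities}), but several of your individual steps are either mis-stated or not actually carried out. First, in the identity case you have the coefficient comparison backwards: with $g=s_k$, the coefficient of $1_G$ in $\kappa({}^g v_i,{}^g v_j)g-g\kappa(v_i,v_j)$ is $\kappa_{g^{-1}}({}^gv_i,{}^gv_j)-\kappa_{g^{-1}}(v_i,v_j)$, so it constrains $\kappa_{s_k}$, not $\kappa_1$; to reach $\kappa_1$ you must compare coefficients of $g$ itself (the paper takes $g=(i\ j)$ and the coefficient of $(i\ j)$), which gives $\kappa_1(v_j,v_i)=\kappa_1(v_i,v_j)$ and hence $\kappa_1(v_i,v_j)=0$ by alternation and $\mathrm{char}(\FF)\neq 2$. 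Your appeal to \cref{PBWkappaconditions}(b) here is vacuous, since that statement concerns $\kappa_g$ for $g$ a reflection, not $\kappa_1$; the same misreading weakens your transposition case, because (b) only kills values with \emph{both} arguments in $V^{(a\ b)}$, leaving exactly the dangerous values $\kappa_{(i\ j)}(v_i,v_j)$ and $\kappa_{(j\ k)}(v_i,v_j)$ untouched. The first of these dies from the coefficient of $1_G$ with $g=(i\ j)$ (symmetry versus alternation again), but the second genuinely requires PBW Condition~(4) with $g=(j\ k)$ and $v_i,v_j,v_k$, which gives $\kappa_{(j\ k)}(v_i,v_j)=-\kappa_{(j\ k)}(v_i,v_k)$, played against the invariance $\kappa_{(j\ k)}(v_i,v_j)=\kappa_{(j\ k)}(v_i,v_k)$ from the coefficient of $1_G$; your blanket ``compare coefficients of $(a\ b)$ with various group elements'' does not identify this.

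The double-transposition case is the real gap: neither of your two alternatives is a proof. The paper disposes of it with two short comparisons in \cref{kappa invariance}: take $g=(i\ j)$ and the coefficient of $(k\ l)$ to get $\kappa_{(i\ j)(k\ l)}(v_j,v_i)=\kappa_{(i\ j)(k\ l)}(v_i,v_j)$, and take $g=(i\ j)(k\ l)$ and the coefficient of $(i\ l)(j\ k)$ to get $\kappa_{(i\ k)(j\ l)}(v_j,v_i)=\kappa_{(i\ k)(j\ l)}(v_i,v_j)$; in both cases the right-hand side of \cref{kappa invariance} contributes nothing to that coefficient, and the resulting symmetry in the two arguments contradicts alternation unless the value is zero. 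Note the contradiction is with alternation, not with $\ker\kappa_g=V^g$ as you suggest (the kernel condition by itself is perfectly compatible with a nonzero $\kappa_g$ on the complement of $V^g$), and your Condition~(5) route would require an explicit verification that the various group-algebra terms cannot cancel, which you have not supplied. Your closing assembly is fine: 3-cycles fixing $i$ or $j$ are killed by \cref{PBWkappaconditions}(c), and \cref{KappaEqualities} (applied to the cycle $(i\ k\ j)$) gives $\kappa_{(i\ k\ j)}(v_i,v_j)=-\kappa_{(i\ j\ k)}(v_i,v_j)$, yielding the stated formula.
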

\begin{proof} 
We first show that $\kappa$ is supported on $3$-cycles using \cref{PBWconditions} and \cref{kappa invariance}. 
By Corollary~\ref{PBWkappaconditions},
$\kappa_g\equiv 0$ unless
$g$ is the identity, a transposition,
the product of two disjoint transpositions,
or a $3$-cycle.
Assume some $\kappa_g(v_i, v_j)\neq 0$
(so $i\neq j$).
Corollary~\ref{PBWkappaconditions}
in fact implies that 
$g$ must be 
$$
1_G,\ (i\ j),\ (j\ k),\ 
(i\ j)(k\ l),\ (i\ k)(j\ l),\  
(i\ j \ k), \text{ or }
(i\ k \ j)
\quad\text{ for some $k\neq l$ and $k,l\not\in\{i, j\}$}.
$$
Let $g=(i\ j)$ in \cref{kappa invariance}
and equate the coefficients of
$1_G$ to conclude that
\[\kappa_{(i\ j)}(v_j, v_i)-\kappa_{(i\ j)}(v_i, v_j)=0,\]
which implies $\kappa_{(i\ j)}(v_i, v_j)=0$
as $\kappa$ is alternating and $\text{char}(\FF)\neq 2$.
Similarly, we
equate the coefficients of $(i\ j)$ to deduce that 
$\kappa_1(v_i, v_j)=0$
and the coefficients of $(k\ l)$ 
to deduce that $\kappa_{(i\ j)(k\ l)}(v_i, v_j)=0$.  
Likewise,
set $g=(i\ j)(k\ l)$ and equate coefficients of $(i\ l)(j\ k)$ to see that  $\kappa_{(i\ k)(j\ l)}(v_i, v_j)=0$.
To verify that $\kappa_{(j\ k)}(v_i, v_j)=0$, 
on one hand we
notice that 
$\kappa_{(j\ k)}(v_i, v_j)=\kappa_{(j\ k)}(v_i, v_k)$
after setting $g=(j\ k)$ and equating the coefficients 
of $1_G$,
and 
on the other hand we notice that 
$\kappa_{(j\ k)}(v_i, v_j)=-\kappa_{(j\ k)}(v_i, v_k)$ 
by
PBW Condition~(4) with $g=(j\ k)$ and vectors $v_i, v_j$, and $v_k$.
Hence by
Lemma~\ref{KappaEqualities},
      \[\begin{aligned}
      \kappa(v_i, v_j)
      &=
      \sum_{k\neq i,j}\kappa_{(i\ j\ k)}(v_i, v_j)(i\ j\ k)+\kappa_{(i\ k\ j)}(v_i, v_j)(i\ k\ j)\\
      &=
      \sum_{k\neq i,j}\kappa_{(i\ j\ k)}(v_i, v_j)
      \big((i\ j\ k)-(i\ k\ j)\big).\end{aligned}
\]
\end{proof}
The next proposition gives an explicit formula for $\kappa(v_i,v_j)$ when $\cH_{\ld,\kappa}$ is PBW.

\vspace{1ex}

\begin{prop}
\label{kappa def} 
If $\cH_{\ld,\kappa}$ satisfies 
the PBW property, 
then 
\[
\kappa(v_i,v_j)=\sum_{k\neq i,j}\big(\alpha_{ijk}+\kappa_{(1\ 2\ 3)}(v_1, v_2)-\alpha_{123}\big)\big((i\ j\ k)-(i\ k\ j)\big)
\ \text{ for }
i\neq j.
\]
\end{prop}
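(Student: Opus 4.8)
The plan is to promote \cref{kappa 3-cycles} to an explicit formula by identifying the scalars $\kappa_{(i\ j\ k)}(v_i,v_j)$. Since \cref{kappa 3-cycles} already records
\[
\kappa(v_i,v_j)=\sum_{k\neq i,j}\kappa_{(i\ j\ k)}(v_i,v_j)\big((i\ j\ k)-(i\ k\ j)\big),
\]
it will suffice to show that $\kappa_{(i\ j\ k)}(v_i,v_j)-\alpha_{ijk}$ does not depend on the choice of distinct indices $i,j,k$; its common value must then equal $\kappa_{(1\ 2\ 3)}(v_1,v_2)-\alpha_{123}$, and substituting back into the display above gives the claim.

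To establish this independence I would feed \cref{kappa invariance} into \cref{kappa 3-cycles}. First I would note, straight from the definition in \cref{equation:alpha beta def}, that ${}^g\alpha_{ijk}=\alpha_{g(i)g(j)g(k)}$. Then, fixing $g\in G$, I would expand the left-hand side $\kappa({}^gv_i,{}^gv_j)g-g\kappa(v_i,v_j)$ of \cref{kappa invariance} using \cref{kappa 3-cycles} together with the conjugation identity $(g(i)\ g(j)\ g(k))\,g=g\,(i\ j\ k)$, and likewise expand the right-hand side; this should turn \cref{kappa invariance} into
\[
\sum_{k\neq i,j}\Big(\kappa_{(g(i)\ g(j)\ g(k))}(v_{g(i)},v_{g(j)})-\kappa_{(i\ j\ k)}(v_i,v_j)\Big)\big(g(i\ j\ k)-g(i\ k\ j)\big)
=\sum_{k\neq i,j}\big({}^g\alpha_{ijk}-\alpha_{ijk}\big)\big(g(i\ j\ k)-g(i\ k\ j)\big).
\]
Comparing the coefficient of the single group element $g(i\ j\ k)$ on both sides should yield
\[
\kappa_{(g(i)\ g(j)\ g(k))}(v_{g(i)},v_{g(j)})-\alpha_{g(i)g(j)g(k)}
=\kappa_{(i\ j\ k)}(v_i,v_j)-\alpha_{ijk}.
\]
Since $n>2$, the group $\s_n$ acts transitively on ordered triples of distinct indices, so this identity forces $(i,j,k)\mapsto\kappa_{(i\ j\ k)}(v_i,v_j)-\alpha_{ijk}$ to be constant, which finishes the proof.

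The step I expect to be the main obstacle is the coefficient extraction in the first display: one has to check that for fixed $i,j$ the $2(n-2)$ group elements $(i\ j\ k)$ and $(i\ k\ j)$ with $k\neq i,j$ are pairwise distinct and stay distinct after left multiplication by $g$ (otherwise coefficients could merge), and that the reindexing $l=g(k)$ correctly rewrites $\kappa(v_{g(i)},v_{g(j)})$---which \cref{kappa 3-cycles} expands as a sum over $l\neq g(i),g(j)$---as a sum over $k\neq i,j$ matching term-by-term with $g\,\kappa(v_i,v_j)$. Once this bookkeeping is done, the rest is a routine scalar identification together with the transitivity argument, and \cref{KappaEqualities} can be used as a consistency check on the resulting formula.
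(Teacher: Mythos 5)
Your proposal is correct and follows essentially the same route as the paper: expand both sides of \cref{kappa invariance} using \cref{kappa 3-cycles}, equate coefficients of $g(i\ j\ k)$ to get $\kappa_{(g(i)\ g(j)\ g(k))}(v_{g(i)},v_{g(j)})-{}^g\alpha_{ijk}=\kappa_{(i\ j\ k)}(v_i,v_j)-\alpha_{ijk}$, and then transport $(1,2,3)$ to $(i,j,k)$ (the paper does this with the explicit choice $g=(i\ 1)(2\ j)(3\ k)$, which is just your transitivity step made concrete). The bookkeeping you flag does check out, so no gap remains.
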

\begin{proof}
By \cref{PBWconditions}, we may 
use \cref{kappa 3-cycles} 
and Lemma~\ref{kappa invariance} to write $g\kappa(v_i, v_j)$
in two ways
and then equate
the coefficients of $g(i\ j\ k)$
for distinct $i, j, k$ in $\{1, \ldots, n\}$:
\[
\kappa_{(i\ j\ k)}(v_i, v_j)=\kappa_{g(i\ j\ k)g^{-1}}(v_{g(i)},v_{g(j)})-
\ ^g\alpha_{ijk}+\alpha_{ijk}.\]
In particular, for $g=(i\ 1)(2\ j)(3\ k)$, 
$\kappa_{(i\ j\ k)}(v_i, v_j) = \kappa_{(1\ 2\ 3)}(v_1, v_2)-\alpha_{123}+\alpha_{ijk},
$
and \cref{kappa 3-cycles} implies the result.
\end{proof}

\vspace{1ex}

\begin{cor}\label{kappa def implies equalities}
If $\kappa$ is defined as in \cref{kappa def},
then
for all distinct $i,j,k$, 
        \[
        \kappa_{(i\ j\ k)}(v_i, v_j)
        =
        \kappa_{(i\ j\ k)}(v_j, v_k)
        =
        \kappa_{(i\ j\ k)}(v_k, v_i)
        =
        \kappa_{(i\ k\ j)}(v_i, v_k)=-\kappa_{(i\ k\ j)}(v_k, v_i)\, .
        \]
        \end{cor}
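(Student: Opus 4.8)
The plan is to treat this as pure coefficient extraction from the closed formula of \cref{kappa def}, with no appeal to the PBW conditions. Write $c$ for the scalar $\kappa_{(1\ 2\ 3)}(v_1,v_2)-\alpha_{123}$, so that \cref{kappa def} reads
\[
\kappa(v_a,v_b)=\sum_{l\neq a,b}(\alpha_{abl}+c)\big((a\ b\ l)-(a\ l\ b)\big)\qquad\text{for }a\neq b .
\]
The first ingredient I would record is that $\alpha_{ijk}=\alpha_{ij}\alpha_{jk}+\alpha_{jk}\alpha_{ki}+\alpha_{ki}\alpha_{ij}$ is invariant under every permutation of $\{i,j,k\}$: the cyclic rotation of indices merely permutes the three summands, while the transposition $i\leftrightarrow j$ changes the sign of each factor $\alpha_{pq}$ (using $\alpha_{pq}=-\alpha_{qp}$) and hence leaves each two-fold product unchanged; since these generate $\mathfrak S_3$, full symmetry follows. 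The second ingredient is the elementary fact that two $3$-cycles $(a\ b\ l)$ and $(p\ q\ r)$ are equal as permutations precisely when $(p,q,r)$ is a cyclic rotation of $(a,b,l)$; in particular $(a\ b\ l)\neq(a\ l\ b)$, and $(a\ b\ l)=(a\ b\ m)$ forces $l=m$.

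Then each of the five terms in the asserted chain of equalities is, by definition, the $\FF$-coefficient of one of the group elements $(i\ j\ k)$ or $(i\ k\ j)$ in one of the four expressions $\kappa(v_i,v_j),\ \kappa(v_j,v_k),\ \kappa(v_k,v_i),\ \kappa(v_i,v_k)$ (the last two terms both coming from $\kappa(v_k,v_i)$). For each, I would locate the unique index $l$ whose summand in the displayed formula for the relevant $\kappa(v_a,v_b)$ contributes that $3$-cycle through its first term $(a\ b\ l)$, check that the second term $(a\ l\ b)$ never contributes it (that would force $l\in\{a,b\}$, excluded from the sum), and read off the coefficient as $\pm(\alpha_{abl}+c)$. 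Using the cyclic-rotation criterion to pin down $l$ and the full symmetry of $\alpha_{ijk}$ to rewrite $\alpha_{abl}$ as $\alpha_{ijk}$, the first four coefficients all collapse to $\alpha_{ijk}+c$; the fifth, $\kappa_{(i\ k\ j)}(v_k,v_i)$, comes out as $-(\alpha_{ijk}+c)$, which matches the minus sign in the statement (equivalently, the last equality is simply the fact that $\kappa$ is alternating, so $\kappa_{(i\ k\ j)}(v_k,v_i)=-\kappa_{(i\ k\ j)}(v_i,v_k)$).

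I do not anticipate any genuine obstacle: the statement is a formal consequence of the explicit description in \cref{kappa def}, which is exactly why it is phrased as a corollary. The only point demanding care is the bookkeeping of which $3$-cycle each summand represents; writing $(i\ j\ k)=(j\ k\ i)=(k\ i\ j)$ out explicitly and matching against the terms $(a\ b\ l)$ and $(a\ l\ b)$ appearing in each $\kappa(v_a,v_b)$ keeps this completely mechanical. A slightly cleaner packaging, which I might adopt, is to note that $\tau_{ijk}:=(i\ j\ k)-(i\ k\ j)$ is invariant under cyclic rotation of $\{i,j,k\}$ and changes sign under a transposition of indices, so that $\kappa(v_i,v_j)=\sum_{k\neq i,j}(\alpha_{ijk}+c)\,\tau_{ijk}$ attaches the single scalar $\alpha_{ijk}+c$ to the orbit of the ordered triple $(i,j,k)$; the asserted equalities then just say that this one scalar is precisely what appears, up to sign, in every position.
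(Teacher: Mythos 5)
Your proposal is correct and matches the paper's (implicit) argument: the paper states this corollary without proof, treating it exactly as you do — a direct coefficient extraction from the explicit formula of \cref{kappa def}, using the full $\mathfrak{S}_3$-symmetry of $\alpha_{ijk}$ (from $\alpha_{pq}=-\alpha_{qp}$) and the identification of equal $3$-cycles, with no appeal to the PBW conditions (which is precisely the point, since this corollary is what replaces \cref{KappaEqualities} in the converse direction of \cref{classification}). Your spelled-out bookkeeping, including the sign in the last equality via the alternating property, is exactly the intended verification.
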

\section{Classification}

We now give the classification of
Drinfeld Hecke algebras for the
group $G =\s_n$ acting on $V\cong\FF^n$
by permuting basis vectors $v_1,\ldots, v_n$
of $V$.
Recall that $2\neq  \text{char}(\FF)\geq 0$.
We assume $n> 2$ in this section
for ease with notation;
see Section $8$ for the cases $n=1,2,3$.
We show that every Drinfeld Hecke algebra
has relations of a particular form
based on parameters $a_{ij}, b_k$ and $c$
in $\FF$.

\begin{definition}
\label{definition of Hf}
For any ordered tuple of scalars 
in $\FF$,
$$\mu=(a_{ij}, b_{k}, c:
\ 1\leq i < j\leq n,\ 
1\leq k<n ),
$$
let
$\cH_{\mu}$ be the $\FF$-algebra generated
by $V$ and $\FF G$ with relations
\begin{align}
gv_i- \, ^{g}v_ig 
&=
\hspace{-1ex}
\sum_{k=0}^{g(i)-i+n-1}
b_{i+k}\, g
+ 
\sum_{j \neq i}
(a_{ij}-\, a_{g(i)\, g(j)})
\ g(i\ j)
 \text{ for } g\in G, 1\leq i\leq n,
\label{equation: classify lambda}
\\
v_i v_j-v_jv_i
&=
\sum_{k\neq  i,j}(c-a_{123}+a_{ijk})
\big((i\ j\ k)-(i\ k\ j)\big)
 \quad\text{ for } 1\leq i\neq j\leq n
\label{equation: classify kappa}
\end{align}
where\ $a_{ijk}=
a_{ij}a_{jk}+
a_{jk}a_{ki}+
a_{ki}a_{ij}$,
$a_{ji}=-a_{ij}$,
and $b_n=-(b_1+\cdots + b_{n-1})$
with indices on $b$ taken
modulo $n$.
%
\end{definition}

We show that the above algebras $\cH_{\mu}$
make up the complete set 
of Drinfeld Hecke algebras:

\begin{thm}
\label{classification}
For
any Drinfeld Hecke algebra
$\mathcal{H}_{\lambda,\kappa}$
for $G=\mathfrak{S}_n$ acting on $V\cong\FF ^n$ by permutations,
there is an ordered tuple 
$\mu=(a_{ij}, b_{k}, c:1\leq i<j\leq n,1\leq k<n)$ of scalars 
so that 
$\mathcal{H}_{\lambda,\kappa}
=\cH_{\mu}$.
Conversely, for any choice $\mu$ of scalars,
$\mathcal{H}_{\mu}$ is a Drinfeld Hecke algebra.
\end{thm}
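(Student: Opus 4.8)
The plan is to prove the two directions separately, using the structural results already established. The forward direction is essentially a bookkeeping exercise: given a Drinfeld Hecke algebra $\cH_{\lambda,\kappa}$, I would set $a_{ij} := \alpha_{ij} = \tfrac14\lambda_1\big((i\ j),v_i-v_j\big)$ for $i<j$, set $b_k := \beta_k = \tfrac12\lambda_{s_k}(s_k,v_k-v_{k+1})$ for $1\le k<n$, and set $c := \kappa_{(1\ 2\ 3)}(v_1,v_2)$. Then \cref{ld def} says exactly that $\lambda$ has the form \eqref{equation: classify lambda} (after identifying $^g\alpha_{ij}$ with $\alpha_{g(i)g(j)} = a_{g(i)g(j)}$ via the definitions in \cref{equation:alpha beta def} and the fact that $\alpha_{ij}=-\alpha_{ji}$), and \cref{kappa def} says exactly that $\kappa$ has the form \eqref{equation: classify kappa}. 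One small point to verify: $\beta_k = \lambda_{s_k}(s_k,v_k)$ by \cref{lemma:opposite on reflections}, and the sum-to-zero constraint $b_1+\cdots+b_n=0$ (equivalently $b_n = -(b_1+\cdots+b_{n-1})$) is precisely the condition in \cref{ld def} / \cref{betas}, so the tuple $\mu$ is a genuine element of the indexing set. This direction therefore reduces to matching notation.

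The converse is the substantive half: given an arbitrary tuple $\mu$, I must show $\cH_\mu$ is PBW, i.e., verify all five PBW Conditions of \cref{PBWconditions} for the parameters $\lambda,\kappa$ read off from \eqref{equation: classify lambda}, \eqref{equation: classify kappa}. Here is how I would organize it. Define $\alpha_{ij} := a_{ij}$, $\beta_k := b_k$ (indices mod $n$, so $\beta$ sums to zero), and let $\lambda$ be given by the right-hand side of \eqref{equation: classify lambda} and $\kappa$ by the right-hand side of \eqref{equation: classify kappa}. Conditions (1) and (3) follow immediately from \cref{ld def}: that theorem is an ``if and only if,'' and our $\lambda$ is by construction of the form \eqref{equation: definition of ld}. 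For Condition (2), invoke \cref{kappa invariance}: since (1) and (3) hold, (2) is equivalent to the displayed identity there, and I would check that our $\kappa$ satisfies it by a direct computation — the left-hand side $\kappa(^gv_i,{}^gv_j)g - g\kappa(v_i,v_j)$ expands using \eqref{equation: classify kappa} and reindexing by $g$, and the discrepancy is exactly $\sum_{k\ne i,j}(a_{g(i)g(j)g(k)} - a_{ijk})g\big((i\ j\ k)-(i\ k\ j)\big)$, matching the required right-hand side since $a_{ijk}$ is the ``$\alpha_{ijk}$'' symbol and $^g\alpha_{ijk} = a_{g(i)g(j)g(k)}$. Condition (4) is a statement purely about $\kappa_g$ for $g$ a 3-cycle; using \cref{kappa def implies equalities} (which records the symmetries $\kappa_{(i\ j\ k)}(v_i,v_j)=\kappa_{(i\ j\ k)}(v_j,v_k)=\kappa_{(i\ j\ k)}(v_k,v_i)$ that hold for $\kappa$ defined as in \cref{kappa def}, and our $\kappa$ is of that form with $\kappa_{(1\ 2\ 3)}(v_1,v_2)=c$), the cyclic sum in Condition (4) telescopes to zero. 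Condition (5) is the mixed Jacobi identity $\sum_{\text{cyc}}\lambda(\kappa(u,v),w)=0$; since $\kappa$ is supported on 3-cycles and $\lambda$ evaluated on a 3-cycle has a controlled support (3-cycle times transpositions, by \cref{ld def}), I would expand and check coefficient-by-coefficient that everything cancels.

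I expect Condition (5) to be the main obstacle — it is the only one of the five that genuinely couples $\lambda$ and $\kappa$ together, it was not isolated into a clean equivalent form by any earlier lemma (unlike (2), which \cref{kappa invariance} handles), and verifying it requires expanding $\lambda(\kappa(v_i,v_j),v_l)$ for all relevant index patterns (the cases where $l\in\{i,j,\text{third index}\}$ versus $l$ disjoint behave differently) and tracking cancellations across the cyclic sum. A secondary subtlety is making the identification $^g\alpha_{ij} = a_{g(i)g(j)}$ and $^g\alpha_{ijk} = a_{g(i)g(j)g(k)}$ completely unambiguous, including sign conventions when $g(i)>g(j)$, so that \cref{ld def} and \cref{kappa def} can be quoted verbatim; I would state this identification explicitly at the start of the converse. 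Once (5) is dispatched, \cref{PBWconditions} gives the PBW property, hence $\cH_\mu$ is a Drinfeld Hecke algebra, completing the proof; and combining with the forward direction shows the map $\mu\mapsto\cH_\mu$ is a bijection onto the set of all Drinfeld Hecke algebras for $\mathfrak{S}_n$ in its permutation representation.
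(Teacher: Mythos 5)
Your proposal follows essentially the same route as the paper's proof: the same choice of $\mu$ via $a_{ij}=\alpha_{ij}$, $b_k=\beta_k$, $c=\kappa_{(1\ 2\ 3)}(v_1,v_2)$ in the forward direction (quoting \cref{ld def} and \cref{kappa def}), and the same reduction of the converse to the five PBW conditions using \cref{ld def}, \cref{kappa invariance}, and \cref{kappa def implies equalities}. The one step you leave unexecuted, Condition~(5), is exactly the computation the paper carries out — only coefficients of group elements of the shapes $(i\ k\ j\ m)$, $(i\ k)$, and $(i\ j\ k)$ survive, the first vanishing by a quadratic identity among the $a_{ijk}$, the second by direct cancellation, and the third by \cref{gg fixed} — so your outline goes through as stated.
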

\begin{proof}
First assume $\cH_{\ld, \kappa}$ 
    satisfies the PBW property
    for some parameters
    $\ld$ and $\kappa$.
    Let $\cH_{\mu}$ 
    be the algebra of
    \cref{definition of Hf}
    for $\mu=(a_{ij}, b_k, c)$  defined 
    by
$$
\begin{aligned}
a_{ij}
&=&&
\tfrac{1}{4}\lambda_1\big((i\ j), v_i-v_j\big)
 &&  \ \  \text{ for } 1\leq i<j\leq n,
\\
b_k
&=&&
\tfrac{1}{2}\lambda_{(k\ k+1)}\big((k\ k+1), v_k-v_{k+1}\big)
&&\ \ \text{ for } 1\leq k<n,
\\
c
&=&&
\kappa_{(1\ 2\ 3)}(v_1, v_2),
&&\ \ \text{ and }
\\
a_{ijk}
&=&&
a_{ij}a_{jk}+a_{jk}a_{ki}+a_{ki}a_{ij}
.
\end{aligned}
$$
Using ~\cref{PBWconditions},
 we replace $\alpha_{ij}$ and $\beta_k$ with $a_{ij}$ and $b_k$, respectively, in ~\cref{ld def}
 to see that $\ld$ 
 defines the right-hand side of
 relation
 \cref{equation: classify lambda}.
Lemma~\ref{kappa invariance}
implies that
 \[\kappa_{g(i\ j\ k)g^{-1}}(v_{g(i)}, v_{g(j)})
 -\kappa_{(i\ j\ k)}(v_i, v_j) =\ ^ga_{ijk}-a_{ijk} 
 \quad\text{ for all }
 g\in G. 
 \] 
 Let $g=(1\ i)(2\ j)(3\ k)$ to see that
 \[
 \kappa_{(1\ 2\ 3)}(v_1, v_2)-\kappa_{(i\ j\ k)}(v_i, v_j)
 =
 a_{123}-a_{ijk}.
 \]
 \cref{kappa 3-cycles}
 then implies that $\kappa$
 gives the right-hand side of 
 \cref{equation: classify kappa}.
Thus
 $\cH_{\ld, \kappa}=\cH_{\mu}$.

Conversely, fix an algebra $\cH_{\mu}$
  and
 set $\ld(g,v_i)$ equal to the right-hand side of \cref{equation: classify lambda} and 
 set $\kappa(v_i, v_j)$ equal to the right-hand side of \cref{equation: classify kappa} for all $1\leq i\neq j\leq n$
 and $g$ in $G$.
 Extend $\ld$ and $\kappa$  to linear parameter functions 
 $\lambda: \FF G\otimes V \rightarrow \FF G$,
 $\kappa: V\otimes V
 \rightarrow \FF G$
 so that $\cH_{\mu}
 =\cH_{\lambda,\kappa}$.
 
 We show that $\cH_{\ld,\kappa}$
 is a Drinfeld Hecke algebra
 by verifying the five
 PBW Conditions of \cref{PBWconditions}. Results from previous sections
 (with $\alpha_{ij}=a_{ij}$
 and $\beta_k=b_k$) apply.
\cref{ld def} implies that PBW Conditions~(1) and~(3) hold. 
PBW Condition~(2)  is equivalent to the equation in Lemma~\ref{kappa invariance}; we examine the coefficient of 
each $g(i\ j\ k)$
and find that
 \[
 ^g
 \kappa_{(i\ j\ k)}(v_i, v_j)
 -\kappa_{(i\ j\ k)}(v_i, v_j)
 =(\ ^ga_{ijk}+c-a_{123})
 -(a_{ijk}+c-a_{123})
 =\ ^ga_{ijk}-a_{ijk}
 \]
 as desired. The coefficients of $g(i\ k\ j)$ are likewise equal in this equation
 and every other term trivially vanishes, giving 
 PBW Condition~(2).

PBW Condition~(4) is trivial except for
  $g=(i\ j\ k)$ and $v_i, v_j$ and $v_k$. By  \cref{kappa def implies equalities},
  \[
  \begin{aligned}
  &\kappa_{(i\ j\ k)}(v_i- v_j)(^gv_k, v_k)+\kappa_{(i\ j\ k)}(v_j- v_k)(^gv_i, v_i)+\kappa_{(i\ j\ k)}(v_k- v_i)(^gv_j, v_j)\\
  &\ \ 
  =\kappa_{(i\ j\ k)}(v_i, v_j)(v_i- v_k)+\kappa_{(i\ j\ k)}(v_j, v_k)(v_j- v_i)+\kappa_{(i\ j\ k)}(v_k, v_i)(v_k- v_j)=0.\\
  \end{aligned}\]

  To verify PBW Condition~(5) with $v_i, v_j$, and $v_k$, it suffices to consider 
  the coefficients of
  three group elements, 
  namely, $(i\ k\ j\ m)$, $(i\ k)$, and $(i\ j\ k)$ with indices
  distinct,
  as other terms all vanish.
  The coefficient of $(i\ k\ j\ m)$ 
  vanishes: one need only
  expand
  \[
  \begin{aligned}
 \kappa&_{(i\ j\ m)}(v_i, v_j)(a_{ki}-a_{kj})
  -\kappa_{(i\ k\ j)}(v_i, v_k)(a_{jm}-a_{im})\\
   &\ \ \ -\kappa_{(j\ k\ m)}(v_j, v_k)(a_{im}-a_{ik})
  -\kappa_{(i\ m\ k)}(v_k, v_i)(a_{jk}-a_{jm})\\
  &=
  (c-
  a_{123}+a_{ijm})
  (a_{ki}-a_{kj})
  -(c+-a_{123}+a_{ikj})(a_{jm}-a_{im})\\
  &\ \ \ \ -(c-a_{123}+a_{jkm})(a_{im}-a_{ik})
   -(c-a_{123}
   +a_{imk})(a_{jk}-a_{jm})
   \\
  &=a_{ki}(a_{ijm}-a_{jkm})+a_{kj}(a_{imk}-a_{ijm})+a_{jm}(a_{imk}-a_{ikj})+a_{mi}(a_{jkm}-a_{ikj})=0.
  \end{aligned}
  \]
  The coefficient of $(i\ k)$ is just
  \[\kappa_{(i\ j\ k)}(v_i, v_j)
  \big((a_{ij}-a_{jk})+(a_{ji}-a_{kj})-(a_{kj}-a_{ji})-(a_{jk}-a_{ij})\big)
  =0.\]
  The coefficient of $(i\ j\ k)$
  vanishes as well
  by  Lemma~\ref{gg fixed}:
  $$
  \kappa_{(i\ j\ k)}(v_i, v_j)
  \cdot
  \lambda_{(i\ j\ k)}\big((i\ j\ k), v_i+ v_j+ v_k\big)
  =
  0.
  $$
 Hence $\cH_{\mu}$ satisfies the PBW property by
 \cref{PBWconditions}.

\end{proof}

\begin{cor}
For $n>2$, the Drinfeld Hecke algebras for $\s_n$ acting on $\FF^n$ constitute a family defined by $\tfrac{1}{2}(n^2+n)$ parameters 
in $\FF$.
\end{cor}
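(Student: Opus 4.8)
The plan is to read the count off directly from the classification. By \cref{classification}, every Drinfeld Hecke algebra for $\s_n$ acting on $V\cong\FF^n$ by permutations equals $\cH_{\mu}$ for exactly one ordered tuple $\mu=(a_{ij},b_{k},c:1\leq i<j\leq n,\ 1\leq k<n)$ of scalars in $\FF$, and conversely every such tuple yields a Drinfeld Hecke algebra. So the work reduces to counting the free entries of $\mu$ and recording that the assignment $\mu\mapsto\cH_\mu$ is injective, so that the family is genuinely cut out by that many independent parameters rather than fewer.

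First I would count the free scalars in $\mu$. The symbols $a_{ij}$ with $1\leq i<j\leq n$ number $\binom{n}{2}=\tfrac12 n(n-1)$; the symbols $b_k$ with $1\leq k<n$ number $n-1$, the value $b_n$ being forced by $b_n=-(b_1+\cdots+b_{n-1})$ and the quantities $a_{ji}$ and $a_{ijk}$ being determined by the conventions built into \cref{definition of Hf}; and there is the single scalar $c$. Summing, $\tfrac12 n(n-1)+(n-1)+1=\tfrac12(n^2-n)+n=\tfrac12(n^2+n)$, which is the claimed count.

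Next I would verify that $\mu\mapsto\cH_\mu$ is injective. Writing $\cH_\mu=\cH_{\lambda,\kappa}$, the PBW property equips $\cH_\mu$ with the basis $\{v_1^{i_1}\cdots v_n^{i_n}g\}$, so inside $\cH_\mu$ the brackets $v_iv_j-v_jv_i$ and $gv_i-{}^{g}v_ig$ lie in the subspace $\FF G$ and coincide with $\kappa(v_i,v_j)$ and $\lambda(g,v_i)$; from these the recipe in the proof of \cref{classification} recovers $a_{ij}=\tfrac14\lambda_1\big((i\ j),v_i-v_j\big)$, $b_k=\tfrac12\lambda_{(k\ k+1)}\big((k\ k+1),v_k-v_{k+1}\big)$, and $c=\kappa_{(1\ 2\ 3)}(v_1,v_2)$. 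Hence $\mu$ is recovered from $\cH_\mu$ together with its distinguished copies of $V$ and $\FF G$, so distinct tuples give distinct algebras.

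The only point needing genuine care is pinning down what ``a family defined by $N$ parameters'' should mean, namely confirming that the PBW requirement imposes no relations among the free scalars $a_{ij},b_k,c$ beyond the bookkeeping conventions already present in \cref{definition of Hf}. The converse half of \cref{classification} delivers exactly this: every choice of the free scalars produces a PBW algebra, so the parameter space is all of $\FF^{(n^2+n)/2}$ with no further constraints, and the count $\tfrac12(n^2+n)$ is exact.
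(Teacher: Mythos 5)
Your proposal is correct and follows the same route the paper intends: the corollary is stated without proof because the count $\binom{n}{2}+(n-1)+1=\tfrac12(n^2+n)$ is read directly off the tuple $\mu=(a_{ij},b_k,c)$ in \cref{classification}, exactly as you do. Your extra verification that $\mu$ is recoverable from $\cH_\mu$ (with its distinguished copies of $V$ and $\FF G$) via $a_{ij}=\tfrac14\lambda_1((i\ j),v_i-v_j)$, $b_k=\tfrac12\lambda_{s_k}(s_k,v_k-v_{k+1})$, $c=\kappa_{(1\,2\,3)}(v_1,v_2)$ is sound and only adds care beyond what the paper records.
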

\section{Drinfeld Hecke algebras for 
the symmetric group in low dimensions}
\label{lowdim}

We now give the Drinfeld Hecke algebras more explicitly
for $G=\mathfrak{S}_n$ acting in dimensions $n\leq 3$
by permuting basis vectors $v_1,\ldots, v_n$
of $V\cong\FF^n$. They all arise from an invariant parameter
$\kappa$.

\subsection*{One dimension}
The Drinfeld Hecke algebras $\cH_{\ld, \kappa}$ 
for $n=1$ are all trivial:
Theorem \ref{PBWconditions} forces 
$\kappa\equiv 0$ and $\lambda \equiv 0$
and $$
\cH_{\ld, \kappa} 
= \cH_{0, 0
}  =
\FF[v_1, \ldots, v_n]\# G.
$$

\subsection*{Two dimensions}
The Drinfeld Hecke algebras $\cH_{\ld, \kappa}$  
for
$n=2$ constitute a $2$-parameter family
given by
$$
\begin{aligned}
\cH_{a,b}=
\FF\langle
v,g: v\in V
\rangle
/
\big(
&g^2-1,\ \ 
v_1v_2-v_2v_1, \\
&(1\ 2)v_1- \,  v_2(1\ 2) - a-b(1\ 2),\\
&(1\ 2)v_2- \,   v_1(1\ 2) + a+b(1\ 2)
\big),
\end{aligned}
{}_{}
$$
for arbitrary scalars $a, b$ 
in $\FF$.
This follows from
Theorem \ref{PBWconditions} which forces 
$\ld((1\ 2), v_1)=-\ld((1\ 2), v_2)$ and $\kappa(v_1, v_2)=0$ as $\kappa(v_1, v_2)= \kappa(v_2, v_1)$ with $\text{char}(\FF)\neq 2$.

\begin{remark}{\em Note that if
we were to allow char$(\field)=2=n$, we would find instead
a $4$-parameter
family of Drinfeld Hecke algebras: 
for arbitrary $a,b,c,d$ in $\FF$,
$$
\begin{aligned}
\cH_{a,b,c,d}
=\FF\langle
v,g: v\in V
\rangle
/
\big(
&g^2-1,\ \ 
v_1v_2-v_2v_1-c-d(1\ 2),\\
&(1\ 2)v_1- \,   v_2(1\ 2) - a-b(1\ 2),\\
&(1\ 2)v_2- \,  v_1(1\ 2) - a-b(1\ 2)
\big).
\end{aligned}
{}_{}
$$
}
\end{remark}

\subsection*{Three dimensions}
The Drinfeld Hecke  algebras $\cH_{\lambda, \kappa}$
for $n=3$
constitute a $6$-parameter family:
\begin{prop}
Let
$\lambda: \FF G\otimes V \rightarrow \FF G$
and $\kappa: V
\otimes V \rightarrow \FF G$
be linear parameter functions with $\kappa$ alternating.
The algebra $\cH_{\lambda,\kappa}$
 generated by
a basis $v_1, v_2, v_3$ of $V$ and $\FF G$ with relations 
$$
\begin{aligned}
v_iv_j-v_jv_i=
\kappa(v_i, v_j)
\quad\text{ and }\quad
gv_i-\ ^gv_ig=\lambda(g,v_i)
\ \ \text{ for}\  g\in G
\end{aligned}
$$
satisfies the PBW
property if and only if
there are scalars $a_1, a_2, a_3, b_1, b_2,c$ in $\field$
with
$$
\kappa(v_i, v_j)
=
c\big((i\ j\ k)-(i\ k\ j)\big)
\quad\text{ for all }
i,j,k\ \text{ distinct}
$$ 
and
$\lambda$ is defined
by
\begin{small}
\[\begin{aligned}
&\lambda((1\ 2), v_1)=a_1 +b_1(1\ 2)-(a_2+a_3)(1\ 3\ 2),
&&\lambda((2\ 3), v_1)=(a_1+a_3)\big((1\ 3\ 2)+(1\ 2\ 3)\big),\\
&\lambda((1\ 2), v_2)=-a_1-b_1(1\ 2)+(a_2+a_3)(1\ 2\ 3),
&&\lambda((2\ 3), v_2)=a_2+b_2(2\ 3)-(a_1+a_3)(1\ 3\ 2),\\
&\lambda((1\ 2), v_3)=(a_2+a_3)\big((1\ 3\ 2)-(1\ 2\ 3)\big),
&&\lambda((2\ 3), v_3)=1+(2\ 3)-(1\ 2\ 3),\\
 &\lambda((1\ 3), v_2)=(a_1+a_2)\big((1\ 3\ 2)-(1\ 2\ 3)\big),
 &&
  \lambda((1\ 3), v_1)=-a_3-b_3(1\ 3)+(a_1+a_2)(1\ 2\ 3),\\
   &\lambda((1\ 3), v_3)=a_3-(a_1+a_2)(1\ 3\ 2)+b_3(1\ 3),
\end{aligned}
\]
\end{small}
\begin{small}
\[
\begin{aligned}
&\lambda((1\ 2\ 3), v_1)=(a_1-a_2)(1\ 3)-(a_3+a_2)(2\ 3)+b_1(1\ 2\ 3),\ \ \ \     
&& \\
&
\lambda((1\ 2\ 3), v_2)=(a_2+a_3)(1\ 2)+(a_2-a_1)(1\ 3)+b_2(1\ 2\ 3), 
&&  \\
&\lambda((1\ 2\ 3), v_3)=(a_2+a_3)(2\ 3)-(a_2+a_3)(1\ 2)+b_3(1\ 2\ 3),
&&  \\
&\lambda((1\ 3\ 2), v_1)=(a_2-a_3)(1\ 2)+(a_1-a_3)(2\ 3)-b_3(1\ 3\ 2), \\
&   \lambda((1\ 3\ 2), v_2)=(a_3-a_1)(2\ 3)+(a_2-a_1)(1\ 3)-b_2(1\ 3\ 2),\\
&\lambda((1\ 3\ 2), v_3)=(a_1-a_2)(1\ 3)+(a_3-a_2)(1\ 2)-b_1(1\ 3\ 2)\, ,
\end{aligned}
\]
\end{small}
\hspace{-1.5ex}
where $b_3=-(b_1+b_2)$.
\end{prop}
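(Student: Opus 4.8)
The plan is to recognize that, since $n = 3 > 2$, this proposition is exactly the case $n=3$ of \cref{classification}: the Drinfeld Hecke algebras for $\mathfrak{S}_3$ acting on $\FF^3$ by permutations are precisely the algebras $\cH_\mu$ of \cref{definition of Hf}, parametrized by $\mu = (a_{12}, a_{13}, a_{23}, b_1, b_2, c)$ with $b_3 := -(b_1+b_2)$. So it remains only to write out the two defining relations \cref{equation: classify lambda} and \cref{equation: classify kappa} of $\cH_\mu$ explicitly for $G=\mathfrak{S}_3$, after the $\FF$-linear change of variables relating $a_{12}, a_{13}, a_{23}$ to the scalars $a_1, a_2, a_3$ used above. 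Equivalently, and without invoking \cref{classification}, one can verify PBW Conditions~(1)--(5) of \cref{PBWconditions} directly for the displayed $\lambda$ and $\kappa$: that the $\lambda$ displayed above satisfies~(1) and~(3) is \cref{ld def} (with $\alpha_{ij}=a_{ij}$, $\beta_k=b_k$), and that the accompanying $\kappa$ then satisfies~(2),~(4),~(5) follows from \cref{kappa invariance}, \cref{kappa 3-cycles}, \cref{kappa def}, and \cref{kappa def implies equalities}, exactly as in the proof of \cref{classification}.

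For the commutator relation: when $n=3$, every pair of distinct indices $i,j$ has a unique companion $k$, and the scalar $a_{ijk} = a_{ij}a_{jk}+a_{jk}a_{ki}+a_{ki}a_{ij}$ is unchanged when two of its indices are interchanged (apply $a_{pq}=-a_{qp}$ to each of the three monomials and relabel). Hence $a_{ijk}$ depends only on $\{i,j,k\}$, so $a_{ijk}=a_{123}$ always, and \cref{equation: classify kappa} collapses to $v_iv_j-v_jv_i = c\,\big((i\ j\ k)-(i\ k\ j)\big)$; \cref{kappa 3-cycles} and \cref{kappa def implies equalities} confirm this is consistent under interchanging $v_i$ and $v_j$. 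This also shows each such $\kappa$ is $\mathfrak{S}_3$-invariant.

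For the action relations: for each of the five nonidentity $g \in \mathfrak{S}_3$ and each $v_i$, expand the right-hand side of \cref{equation: classify lambda}. The coefficient of $g$ is the telescoping sum $\sum_{k=0}^{g(i)-i+n-1} b_{i+k}$, which by $b_1+b_2+b_3=0$ reduces to one of $0$, $\pm b_1$, $\pm b_2$, $\pm b_3$ according to $g(i)-i \bmod 3$; the remaining contribution is $\sum_{j\neq i}(a_{ij}-a_{g(i)g(j)})\,g(i\ j)$, and one computes each product $g(i\ j)$ in $\mathfrak{S}_3$ (it is the identity, a transposition, or a $3$-cycle) and collects like terms using $a_{ji}=-a_{ij}$. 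This produces the table. Two built-in checks: $\lambda(1,v_i)=0$ by \cref{cor:lambda}(1), and by \cref{cor:lambda}(4) each $\lambda(g,\ast)$ is supported only on the group elements that appear in its row (the identity, $g$ itself when $g$ is a transposition, and certain $3$-cycles or transpositions otherwise).

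I expect the only real obstacle to be clerical: the argument enumerates $5 \times 3$ instances of \cref{equation: classify lambda}, and one must fix a single composition convention for the products $g(i\ j)$, apply $b_3 = -(b_1+b_2)$ uniformly when collapsing the telescoping $b$-sums, and determine the linear substitution expressing $a_1,a_2,a_3$ in terms of $a_{12},a_{13},a_{23}$ precisely enough that every sign in the displayed formulas for $\lambda$ comes out correctly. No new ideas beyond Sections 5--7 are needed; verifying that each listed value of $\lambda(g,v_i)$ matches \cref{equation: definition of ld} and that the accompanying $\kappa$ matches \cref{kappa def} is what closes both directions of the ``if and only if''.
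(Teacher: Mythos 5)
Your proposal is correct and follows the paper's own route: the paper likewise deduces the $n=3$ case directly from \cref{classification}, observing that $a_{ijk}=a_{123}$ for all distinct $i,j,k$ (by the antisymmetry $a_{ij}=-a_{ji}$) so that $\kappa$ collapses to $c\big((i\ j\ k)-(i\ k\ j)\big)$, and then reading off $\lambda$ from \cref{equation: classify lambda}. The only detail you leave open --- the change of variables --- is fixed in the paper as $a_1=a_{12}$, $a_2=a_{23}$, $a_3=a_{31}$.
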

\begin{proof}
\cref{classification}
implies that
$\kappa_{(i\ j\ k)}(v_i, v_j)$ is determined by 
$a_{ijk}-a_{123}$, but for distinct $i, j, k$, $a_{ijk}=a_{123}$
since $a_{ij}=-a_{ji}$, which implies that $\kappa$ is defined as indicated. The parameter $\lambda$ is defined as in \cref{classification} with $a_1=a_{12}, a_2=a_{23}$, and $a_3=a_{31}$. 
\end{proof}


\section{Commutativity up to an
invariant parameter}
\label{sec:}
We saw in \cref{nonmodular}
that Drinfeld Hecke algebras in the nonmodular setting all arise from a
parameter $\kappa$ which is $G$-{\em invariant}.
Again, let $G=\s_n$ act on $V\cong\FF^n$ 
by permuting basis
vectors $v_1, \ldots, v_n$ of $V$.
In \cref{lowdim}, we observed that
the Drinfeld Hecke algebras
in the modular setting in low dimension
($n=1,2,3$)
also all arise from a parameter
$\kappa$ which is $G$-invariant.
By~\cref{classification},
other Drinfeld Hecke algebras
$\cH_{\ld,\kappa}$
with more general parameters
$\lambda$ and $\kappa$ arise,
but here 
we investigate those
with $\kappa$ invariant.
We assume $n> 2$ in this section.
Note that by PBW Condition~(2), the invariance
of $\kappa$ forces
$$
0=
\ld\bigl(\ld(g,v),u\bigr)-\ld\bigl(\ld(g,u),v\bigr)
\quad\text{ for}\quad
u,v\in V,\ g\in G\, .
$$
Again, we take indices on $b$ modulo $n$.
\begin{cor} 
\label{kappainvariant}
An algebra $\cH_{\lambda, \kappa}$ satisfies the PBW property with $\kappa$ invariant
if 
there are
scalars $c$, $d$,
$a_{1i}$,  $b_j$ in $\FF$
for 
$1< i \leq n$,
$1\leq j <n$, 
with the $a_{1i}$ distinct, such that
$$
\begin{aligned}
\kappa(v_i, v_j) 
&= \
c \sum_{k\neq i, j}  (i\ j\ k)-(i\ k\ j)
&&\text{ for }\quad
1\leq i\neq j \leq n ,
\ \text{ and }
\\
\lambda(g, v_i) 
&=
\sum_{k=0}^{g(i)-i+n-1}b_{i+k}\, g\
+
\sum_{j\neq i} (a_{ij}-a_{g(i)g(j)})
\, g(i\ j)
&&\text{ for }\quad
1\leq i\leq n\, ,
\end{aligned}
$$
where $a_{ij}=\frac{d+a_{1i}a_{1j}}{a_{1i}-a_{1j}}$ for $i,j \neq 1$, $i\neq j$, $a_{1i}=-a_{i1}$,
and $b_n=-(b_1+\cdots +b_{n-1})$.
\end{cor}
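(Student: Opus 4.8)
The plan is to reduce the statement to \cref{classification}, whose conclusion is that $\cH_\mu$ is always a Drinfeld Hecke algebra for any admissible scalar tuple $\mu$; since the corollary asserts only sufficiency, this is all that is needed. Given $c$, $d$, distinct scalars $a_{1i}$ ($1<i\le n$), and $b_j$ ($1\le j<n$), I would first complete the array $(a_{ij})_{1\le i<j\le n}$ by retaining the $a_{1i}$ and setting $a_{ij}=(d+a_{1i}a_{1j})/(a_{1i}-a_{1j})$ for $i,j\ne 1$; the denominators are nonzero by distinctness, and $a_{ji}=-a_{ij}$ is immediate, so $\mu=(a_{ij},b_k,c)$ is an admissible tuple for \cref{definition of Hf} and $\cH_\mu$ satisfies the PBW property. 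The $\lambda$-relation \cref{equation: classify lambda} of $\cH_\mu$ is verbatim the formula for $\lambda(g,v_i)$ in the statement, so it remains only to identify the $\kappa$-relations and to check that $\kappa$ is $G$-invariant.

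Both of these reduce to one scalar identity: $a_{ijk}=d$ for every triple of distinct indices, where $a_{ijk}=a_{ij}a_{jk}+a_{jk}a_{ki}+a_{ki}a_{ij}$. Granting it, in particular $a_{123}=d$, so the coefficient $c-a_{123}+a_{ijk}$ in \cref{equation: classify kappa} equals $c$ and that relation becomes $v_iv_j-v_jv_i=c\sum_{k\ne i,j}\big((i\ j\ k)-(i\ k\ j)\big)$, which is the displayed formula for $\kappa(v_i,v_j)$; hence $\cH_{\ld,\kappa}=\cH_\mu$ is PBW. For invariance I would conjugate this expression by $g$: reindexing the sum via the bijection $k\mapsto g(k)$ from $\{1,\dots,n\}\setminus\{i,j\}$ onto $\{1,\dots,n\}\setminus\{g(i),g(j)\}$ turns $g\kappa(v_i,v_j)g^{-1}$ into $\kappa({}^gv_i,{}^gv_j)$, so $g\kappa(v_i,v_j)=\kappa({}^gv_i,{}^gv_j)g$, and since $v_1,\dots,v_n$ is a basis of $V$ this extends bilinearly to all of $V$. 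Equivalently, $a_{ijk}=d$ forces ${}^g\alpha_{ijk}=a_{g(i)g(j)g(k)}=d=\alpha_{ijk}$, so the right-hand side of \cref{kappa invariance} vanishes.

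So the crux is the identity $a_{ijk}=d$, which I would prove in two cases. If $1\in\{i,j,k\}$, say the triple is $\{1,i,j\}$ with $i,j\ne 1$, then $a_{1ij}=a_{1i}a_{ij}+a_{ij}a_{j1}+a_{j1}a_{1i}=a_{ij}(a_{1i}-a_{1j})-a_{1i}a_{1j}=(d+a_{1i}a_{1j})-a_{1i}a_{1j}=d$, using $a_{j1}=-a_{1j}$ and the formula defining $a_{ij}$. If $i,j,k$ are all $\ne 1$, set $x=a_{1i}$, $y=a_{1j}$, $z=a_{1k}$ (distinct); clearing the denominator $(x-y)(y-z)(z-x)$ reduces $a_{ijk}=d$ to the polynomial identity
\[
(d+xy)(d+yz)(z-x)+(d+yz)(d+zx)(x-y)+(d+zx)(d+xy)(y-z)=d\,(x-y)(y-z)(z-x),
\]
which a short expansion confirms: the $d^2$-terms cancel because $(z-x)+(x-y)+(y-z)=0$, the $d^0$-terms cancel because $y(z-x)+z(x-y)+x(y-z)=0$, and the $d^1$-terms collapse to $d\big(x^2(z-y)+y^2(x-z)+z^2(y-x)\big)=d\,(x-y)(y-z)(z-x)$. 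This Case-2 identity is the only genuine computation in the proof; it is where the precise shape of the formula $a_{ij}=(d+a_{1i}a_{1j})/(a_{1i}-a_{1j})$ is essential, and everything else is bookkeeping layered on top of \cref{classification}.
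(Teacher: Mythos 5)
Your proposal is correct and follows essentially the same route as the paper: both reduce the corollary to \cref{classification} by verifying that $a_{ijk}=d$ for all distinct triples (whence $c-a_{123}+a_{ijk}=c$), the paper merely asserting ``a calculation confirms'' what you carry out explicitly in the two cases, and both check invariance of $\kappa$ directly. Your expansion of the polynomial identity and the conjugation/reindexing argument for invariance are accurate, so there is nothing to correct.
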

\begin{proof}
Consider the ordered tuple
$\mu=(a_{ij},b_k, c: 1\leq i< j \leq n, 1\leq k <n)$ 
and let $\cH_{\mu}$
be the PBW algebra of \cref{classification}.
A calculation confirms that $a_{ijk}=a_{ij}a_{jk}+a_{jk}a_{ki} +
a_{ki}a_{ij}=d$ for all distinct $i,j,k$
and thus $a_{ijk}-a_{123}=0$,
implying that $\cH_{\mu}
=\cH_{\lambda,\kappa}$.
Thus $\cH_{\lambda,\kappa}$
satisfies the PBW property
and one may check the invariance of
$\kappa$ directly.
\end{proof}
\begin{prop}\label{golden rule}
The algebra $\cH_{\lambda,0}$ satisfies the PBW
property
for
$\lambda:\field G \otimes V\rightarrow
\field G$
defined up to scalar in $\FF$ by
$$
\lambda(g, v_i) = 
(g(i)-i)\, g
\quad\text{ for all } 
g\in G,
\ 1\leq i\leq n.
$$
\end{prop}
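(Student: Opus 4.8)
The plan is to verify the five PBW Conditions of \cref{PBWconditions} directly, after first disposing of the scalar. Since $\kappa\equiv 0$, the scaling observation recorded just after \cref{PBWconditions} (which replaces $\cH_{\lambda,\kappa}$ by $\cH_{c\lambda,c^2\kappa}$) reduces the statement, in which $\lambda$ is determined only up to a scalar, to the single case $\lambda(g,v_i)=(g(i)-i)\,g$ with $\kappa\equiv 0$, because $c^2\cdot 0=0$. One extends this $\lambda$ to an $\FF$-linear map $\FF G\otimes V\to\FF G$ on the basis $\{g\otimes v_i\}$ and then works through the conditions.

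The bulk of the work is light. Conditions~(4) and~(5) are vacuous, since every $\kappa_g$ vanishes and $\kappa(u,v)=0$ for all $u,v$. For Condition~(3), the coefficient function $\lambda_h(g,-)$ is supported on $h=g$, and at $h=g$ both sides of the required identity carry the factor ${}^{g}u-{}^{g}u=0$, so Condition~(3) holds automatically. For Condition~(1), it suffices by linearity in the vector slot to test on basis vectors, where it is a telescoping cancellation: $\lambda(gh,v_i)=\bigl(g(h(i))-i\bigr)\,gh$, while $\lambda(g,{}^{h}v_i)\,h+g\,\lambda(h,v_i)=\bigl(g(h(i))-h(i)\bigr)\,gh+\bigl(h(i)-i\bigr)\,gh$, and the two agree. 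For Condition~(2), the left-hand side is $0$ because $\kappa\equiv 0$, so one must check $\lambda\bigl(\lambda(g,v),u\bigr)=\lambda\bigl(\lambda(g,u),v\bigr)$; by bilinearity in $u,v$ this reduces to basis vectors, and there $\lambda\bigl(\lambda(g,v_i),v_j\bigr)=(g(i)-i)(g(j)-j)\,g$ by linearity of $\lambda$ in its first argument, which is symmetric in $i$ and $j$. By \cref{PBWconditions}, $\cH_{\lambda,0}$ is then a Drinfeld Hecke algebra.

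I do not expect a real obstacle here: every condition collapses either to vacuity, to the one-line telescoping identity for Condition~(1), or to the symmetry of the scalar $(g(i)-i)(g(j)-j)$. As an alternative, one can bypass the direct check by recognizing $\cH_{\lambda,0}$ as the algebra $\cH_\mu$ of \cref{definition of Hf} with all $a_{ij}=0$, with $b_k=1$ for $1\le k<n$ (so $b_n=1-n$), and with $c=0$: then \cref{equation: classify kappa} reads $v_iv_j=v_jv_i$ and \cref{equation: classify lambda} reads $gv_i-{}^{g}v_ig=(g(i)-i)\,g$ once one verifies the modular identity $\sum_{k=0}^{\,g(i)-i+n-1}b_{i+k}=g(i)-i$ (the sum runs over $g(i)-i+n$ consecutive values of $b$, exactly one of which is $1-n$ and the rest $1$), so that $\cH_\mu=\cH_{\lambda,0}$ and \cref{classification} applies. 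The only finicky point on this second route is that index-arithmetic identity.
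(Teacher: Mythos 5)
Your proposal is correct, and your primary route is genuinely different from the paper's. The paper proves \cref{golden rule} by specializing the classification theorem (\cref{classification}): it takes $a_{ij}=0$, $c=0$, $b_k=1$ for $k<n$ (so $b_n=1-n$) and reads off $\lambda(g,v_i)=\sum_{k=0}^{g(i)-i+n-1}b_{i+k}\,g=(g(i)-i)\,g$, handling the scalar by replacing $1$ with a common constant --- this is exactly your ``alternative'' route, including the index-arithmetic identity you flag as the only finicky point (which does hold in all cases: the $g(i)-i+n$ consecutive indices mod $n$ hit the index $n$ exactly once). Your main route instead verifies the five conditions of \cref{PBWconditions} directly, and every step checks out: (4) and (5) are vacuous with $\kappa\equiv 0$; (3) holds because $\lambda_h(g,\cdot)$ is supported on $h=g$ where the factor ${}^{g}u-{}^{g}u$ kills both sides; (1) is the telescoping identity $(g(h(i))-h(i))+(h(i)-i)=gh(i)-i$; and (2) reduces to the symmetry of $(g(i)-i)(g(j)-j)$ after using linearity of $\lambda$ in its first slot. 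The direct check is more self-contained and makes transparent \emph{why} this particular $\lambda$ works (it is essentially a ``coboundary-free cocycle'' computation), whereas the paper's route is shorter given that the heavy lifting of \cref{classification} has already been done and situates this example inside the full parameter family. Both are complete proofs.
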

\begin{proof}
In \cref{classification}, set $a_{ij}=0$, $c=0$, and $b_k=1$
for all $1\leq i<j
\leq n$, $1\leq k<n$.
Then $\kappa$ vanishes and $\ld(g, v)$ is supported on $g$ with
\[\ld(g, v_i)
=\sum_{k=0}^{g(i)-i+n-1}b_i\, g
=(g(i)-i)\, g,\]
as $b_n=-(n-1)$. 
For a scalar multiple, just choose the same
constant for all
$b_k$, $k<n$.
\end{proof}
Note that the converse of
\cref{golden rule}
fails: There are examples
of PBW algebras 
$\cH_{\lambda, 0}$
with $\lambda$ taking other forms.

\begin{cor}
If $\cH_{\lambda, 0}$
satisfies the PBW property,
then so does
$\cH_{\lambda,\kappa}$
for
$$
\kappa:V \otimes V\rightarrow
\field G\quad
\text{ defined by}\quad
\kappa (v_i, v_j) = 
\sum_{k\neq i, j}  (i\ j\ k)-(i\ k\ j)
\quad\text{ for }\quad
i\neq j.
$$
\end{cor}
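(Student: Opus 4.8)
The plan is to deduce this from the classification \cref{classification} of Drinfeld Hecke algebras for $\s_n$, exploiting that the commutator parameter enters that classification only through a single scalar. Since $\cH_{\lambda,0}$ satisfies the PBW property and its commutator parameter is identically zero, \cref{classification} produces an ordered tuple $\mu=(a_{ij},b_k,c)$ with $\cH_{\lambda,0}=\cH_\mu$; moreover the proof of \cref{classification} shows that $\lambda$ is exactly the right-hand side of relation \cref{equation: classify lambda} for this $\mu$, while the (vanishing) commutator parameter is exactly the right-hand side of \cref{equation: classify kappa}.

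Next I would extract the constraint this imposes on $\mu$. Comparing $\kappa\equiv 0$ with \cref{equation: classify kappa} and using that, for each fixed $i\neq j$ with $n>2$, the $2(n-2)$ elements $(i\ j\ k),(i\ k\ j)$ for $k\neq i,j$ are distinct non-identity elements of $\s_n$ (hence $\field$-linearly independent in $\field\s_n$), the coefficient of each must vanish, giving
\[
c-a_{123}+a_{ijk}=0\qquad\text{for all distinct }i,j,k.
\]
This coefficient-matching is the only genuine computation and it is routine; the one thing to watch is that $n>2$ really makes the listed group elements distinct, so that the vanishing of $\kappa$ truly pins down each $a_{ijk}$.

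Finally I would bump the scalar: let $\mu'=(a_{ij},b_k,c+1)$, with the same $a$'s and $b$'s. By the converse half of \cref{classification}, $\cH_{\mu'}$ is a Drinfeld Hecke algebra, so it has the PBW property, and it remains to identify $\cH_{\mu'}$ with $\cH_{\lambda,\kappa}$ for the $\kappa$ in the statement. Relation \cref{equation: classify lambda} depends only on the $a_{ij}$ and $b_k$, so it is unchanged and still defines $\lambda$; and relation \cref{equation: classify kappa} for $\mu'$ becomes
\[
v_iv_j-v_jv_i=\sum_{k\neq i,j}\bigl((c+1)-a_{123}+a_{ijk}\bigr)\bigl((i\ j\ k)-(i\ k\ j)\bigr)=\sum_{k\neq i,j}\bigl((i\ j\ k)-(i\ k\ j)\bigr)
\]
by the displayed identity, and the right-hand side is precisely $\kappa(v_i,v_j)$ (which is visibly alternating since $(j\ i\ k)=(i\ k\ j)$). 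Hence $\cH_{\lambda,\kappa}=\cH_{\mu'}$ has the PBW property. There is no serious obstacle here: the corollary is in essence the observation that adding $1$ to the scalar $c$ converts a PBW algebra with $\kappa\equiv0$ into a PBW algebra with the same $\lambda$ whose new commutator parameter is exactly the asserted $\kappa$.
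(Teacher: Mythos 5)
Your proposal is correct and follows essentially the same route as the paper: invoke \cref{classification} to write $\cH_{\lambda,0}=\cH_\mu$, observe that $\kappa\equiv 0$ forces $c-a_{123}+a_{ijk}=0$ for all distinct $i,j,k$, and then pass to $\mu'=(a_{ij},b_k,c+1)$ so that the new commutator parameter is exactly the asserted $\kappa$ while $\lambda$ is unchanged. The only difference is that you spell out the coefficient-matching via linear independence of the $3$-cycles, which the paper leaves as an observation.
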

\begin{proof}
 By~\cref{classification},
 there is an ordered tuple 
 $\mu=(a_{ij}, b_k, c
 :1\leq i< j\leq n, 
 i\leq k <n)$
 so that $\cH_{\mu}=\cH_{\lambda,0}$.
 Observe that
 $c=a_{123}-a_{ijk}$
 for all distinct $i,j,k$.
 Set $c'=c+1$
 and consider the ordered tuple
  $\mu'=(a_{ij}, b_k, c'
 :1\leq i< j\leq n, 
 i\leq k <n)$.
 The algebra
 $\cH_{\mu'}$ 
  satisfies the PBW 
  property by~\cref{classification}.
Since  
  $c'-a_{123}+a_{ijk}=1$
for all distinct $i,j,k$,
$\cH_{\mu'}=\cH_{\lambda,\kappa}$
for $\kappa$ as given in the statement,
and $\cH_{\lambda,\kappa}$
satisfies the PBW property.
\end{proof}

We end by highlighting a
handy $2$-parameter family of Drinfeld Hecke algebras.
\begin{cor}
The algebra generated by $v$ in 
$V=\field^n$ and $\field G$
with relations
$$
g v_i  -\ ^g v_i\, g 
= a \big(g(i)-i\big)g
\ \text{ and }\ 
v_i v_j - v_j v_i = 
b \sum_{k\neq i, j}  
(i\ j\ k)-(i\ k\ j)
\quad\text{ for all } g\in G, i\neq j
$$ 
satisfies the PBW property
for any $a,b$ in $\FF$.
\end{cor}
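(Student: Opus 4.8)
The plan is to obtain this as an immediate special case of the classification in \cref{classification}. First I would choose the ordered tuple $\mu = (a_{ij}, b_k, c : 1\le i<j\le n,\ 1\le k<n)$ with $a_{ij} = 0$ for all $i<j$, with $b_k = a$ for all $1\le k<n$, and with $c = b$. By \cref{definition of Hf} this forces the determined values $b_n = -(b_1 + \cdots + b_{n-1}) = -(n-1)a$ and $a_{ijk} = a_{ij}a_{jk} + a_{jk}a_{ki} + a_{ki}a_{ij} = 0$ for all distinct $i,j,k$; in particular $a_{123} = 0$.

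Next I would verify that the defining relations \cref{equation: classify lambda} and \cref{equation: classify kappa} of $\cH_\mu$ collapse to the two relations in the statement. For the commutator relation, $c - a_{123} + a_{ijk} = b$, so \cref{equation: classify kappa} becomes $v_iv_j - v_jv_i = b\sum_{k\ne i,j}\big((i\ j\ k) - (i\ k\ j)\big)$. For the skew relation, every coefficient $a_{ij} - a_{g(i)g(j)}$ vanishes, so \cref{equation: classify lambda} reduces to $gv_i - {}^{g}v_i g = \sum_{k=0}^{g(i)-i+n-1} b_{i+k}\,g$; since the $b_j$ are all equal to $a$ when read modulo $n$ together with $b_n = -(n-1)a$, the cyclic sum telescopes to $a\,(g(i) - i)$, giving $gv_i - {}^{g}v_i g = a\,(g(i)-i)\,g$. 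This last telescoping is exactly the computation already carried out in the proof of \cref{golden rule} (and, one layer down, in \cref{betas}), so it may simply be cited rather than redone.

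Finally, \cref{classification} asserts that $\cH_\mu$ is a Drinfeld Hecke algebra — equivalently, satisfies the PBW property — for every ordered tuple $\mu$ of scalars in $\FF$. Since the algebra in the statement equals $\cH_\mu$ for the tuple chosen above, it too satisfies the PBW property, for arbitrary $a,b$ in $\FF$.

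I do not anticipate any genuine obstacle: the proof is a direct instantiation of \cref{classification}, and the only step needing a moment's care is the bookkeeping for $\sum_{k=0}^{g(i)-i+n-1} b_{i+k} = a\,(g(i)-i)$, which is \cref{golden rule}. As an alternative route one could instead combine \cref{golden rule}, which supplies the PBW algebra $\cH_{\lambda,0}$ with $\lambda(g,v_i) = a\,(g(i)-i)\,g$, with the corollary immediately preceding this one together with the trivial rescaling there of $c + 1$ to $c + b$; this yields the same conclusion.
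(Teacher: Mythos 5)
Your proof is correct and follows essentially the same route as the paper: instantiate \cref{classification} with $a_{ij}=0$, $b_k=a$, and $c=b$, then check that the relations of $\cH_\mu$ collapse to the two stated ones (the telescoping of $\sum_{k}b_{i+k}$ being the computation already done for \cref{golden rule}). In fact your choice of parameters is the more careful one — the paper's one-line proof writes $c=1$, $b_k=1$, which literally only covers $a=b=1$ — so your version supplies the general case as intended.
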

\begin{proof}
Use \cref{classification} with $a_{ij}=0$ , $c=1$, and $b_k=1$ for all $1\leq i<j\leq n$, $1\leq k<n$.
\end{proof}

\vspace{2ex}

\section{Acknowledgements}
The authors thank the referee for a very
careful reading of the article
and many helpful suggestions.

\end{document}